\documentclass[11pt]{article}
\usepackage{amsthm}
\usepackage{amsfonts}
\usepackage{xr}
\usepackage{graphicx}
\usepackage{amsmath,bm}
\usepackage{epsfig,subfigure}
\usepackage{color}
\usepackage{amssymb}

\newcommand{\N}{\mathbb{N}} 
\newcommand{\Z}{\mathbb{Z}} 
\newcommand{\R}{\mathbb{R}} 
\newcommand{\CC}{\mathbb{C}} 
\newcommand{\T}{\mathbb{T}} 

\newcommand{\Dcal}{\mathcal{D}}

\newcommand{\Lcal}{\mathcal{L}}
\newcommand{\Mcal}{\mathcal{M}}

\def\epsilon{\varepsilon}
\def\hat{\widehat}
\def\tilde{\widetilde}

\newcommand{\me}{\mathrm{e}}

\def\XXint#1#2#3{{\setbox0=\hbox{$#1{#2#3}{\int}$ }
		\vcenter{\hbox{$#2#3$ }}\kern-.6\wd0}}

\newcommand{\SE}{\setcounter{equation}{0} \section}
\newcommand{\be}{\begin{equation}}
\newcommand{\ee}{\end{equation}}
\newcommand{\baa}{\begin{array}}
\newcommand{\eaa}{\end{array}}
\newcommand{\ba}{\begin{eqnarray}}
\newcommand{\ea}{\end{eqnarray}}

\newtheorem{theorem}{Theorem}[section]     

\newtheorem{corollary}{Corollary}[section]

\newtheorem{lemma}{Lemma}[section]
\newtheorem{definition}{Definition}[section]

\newtheorem{remark}{Remark}[section]

\begin{document}
\date{}
\title{\bf{Bistable pulsating waves with periodic advection: homogenization and sharp speed asymptotics}}
\author{Weiwei Ding\footnote{School of Mathematical Sciences, South China Normal University, Guangzhou 510631, China (dingweiwei@m.scnu.edu.cn).} 
\qquad Linfeng Xu\footnote{School of Mathematical Sciences and Wu Wen-Tsun Key Laboratory of Mathematics, University of Science and Technology of China, Hefei, Anhui, 230026, China (xlf\_hzfy@mail.ustc.edu.cn)} 
}

\maketitle

\begin{abstract}
We study bistable pulsating waves for reaction-diffusion equations with general periodic advection in arbitrary space dimension, allowing the diffusion matrix to be nonsymmetric. Assuming that the homogenized equation admits a traveling wave with nonzero speed in a given direction, we construct moving pulsating waves for all sufficiently small spatial periods $L$ and prove their convergence to the homogenized wave as $L\to0^+$. The existence range and convergence are uniform in the propagation direction when the homogenized speeds never vanish. We also prove uniqueness of the wave speed for arbitrary periods, profile uniqueness for moving waves, and stationary-wave uniqueness in the standing case under continuity of the competing profile. For spatially homogeneous reactions, we derive the expansion $c_L=c_0+Lc_1+O(L^2)$
and an explicit formula for $c_1$. Examples with constant diffusion and zero-mean periodic advection show that $c_1$ can have either sign, demonstrating that the heterogeneity in advection may accelerate or decelerate propagation relative to the homogenized limit and that the general $O(L)$ speed estimate is sharp.	
	
\vskip 2mm
\noindent{\small{\it  AMS Subject Classifications}: 35K57; 35B27; 35B30; 35B51; 35C07.}
\vskip 2mm
\noindent{\small{\it Keywords}: reaction-diffusion equations; advection; pulsating waves; homogenization.}
\end{abstract}	

\SE{Introduction and main results}

Fronts are fundamental objects in the study of propagation phenomena for reaction-diffusion equations, describing the invasion of one state by another in various applications such as biological invasions, disease spreading, and phase transitions. In spatially periodic media, classical traveling waves are generalized by pulsating waves, which capture the interaction between propagation and spatial heterogeneity. Moreover, in realistic environments, propagation is often affected by advection induced by environmental flows or biased dispersal (see \cite{cc,sk,x3} and references therein). This motivates the study of pulsating waves for the following reaction-diffusion equation with advection:
\begin{equation}\label{equation}
	u_t = \nabla \cdot (A_L(x)\nabla u) + B_L(x) \cdot \nabla u + f_L(x,u), \quad
	t \in \R, \; x \in \R^N,
\end{equation}
with $L > 0$ and $N \in \N$.
The diffusion, advection and reaction coefficients $A_L, B_L$ and $f_L$ are, respectively, given by
\begin{equation*}
	A_L(x) = A(x/L), \quad 
	B_L(x) = B(x/L), \quad 
	f_L(x,u) = f(x/L,u),
\end{equation*}
where the functions 
$$A(\cdot)=(A_{ij}(\cdot)): \R^N \to \mathcal{M}_N,\quad  B(\cdot): \R^N \to \R^N, \quad \hbox{and}\quad f(\cdot,u):\R^N\to\R $$
are $\Z^N$-periodic, that is, they are $1$-periodic in all variables $x_1,\cdots,x_N$. Here, $\mathcal{M}_N$ denotes the set of $N\times N$ real matrices. Throughout this paper, we assume that the matrix  $A(\cdot)$ and the vector $B(\cdot)$ are, respectively, of class $C^{2}(\R^N)$ and $C^{1}(\R^N)$, and there exists $\eta_0 > 0$ such that
\begin{equation}\label{uniform-elliptic}
	\sum_{1\leq i,j\leq N} A_{ij}(x) \xi_i  \xi_j \geq \eta_0 |\xi|^2\quad\hbox{for all }\, x\in\R^N,\;(\xi_i)_{1\leq i\leq N}\in \R^N.
\end{equation}

The function $f: \R^N \times [0,1] \to \R$, $(x,u) \mapsto f(x,u)$ is of class $C^{ 3}$.
We further assume that 
\begin{equation}\label{f-zero}
	f(x,0) = f(x,1) = 0 \quad\hbox{for all }\,x\in\R^N,
\end{equation}
and that $0$ and $1$ are uniformly (in $x$) stable zeros of $f(x,\cdot)$, in the sense that there exist $\gamma_0 > 0$
and $\delta_0 \in (0,1/2)$ such that
\begin{equation}\label{strong stability}
	\partial_u f(x,u) \leq -\gamma_0 \quad 
	\hbox{for all }\, (x,u) \in \R^N \times ([0,\delta_0] \cup  [1-\delta_0,1]).
\end{equation}
By the periodicity and regularity of $f$,
\eqref{strong stability} is equivalent to $\max\{\partial_u f(x,0), \partial_u f(x,1)\} < 0$ for all $x \in \R^N$. Hence, $0$ and $1$ are two linearly stable $L$-periodic steady states of \eqref{equation}. 
For mathematical convenience, 
we smoothly extend the function $f$ to $\R^N \times (\R \setminus [0,1])$ such that $f \in C^{ 3}(\R^N \times \R)$ with bounded derivatives and
\begin{equation}\label{extension}
\partial_u f(x,u) \leq - \gamma_0 \quad
\hbox{for all }\, (x,u) \in \R^N \times ((-\infty,\delta_0] \cup [1-\delta_0,+\infty)).
\end{equation}
Thus,
$f$ is $\Z^N$-periodic in $x$,
$\min_{x \in \R^N} f(x,u) > 0$ for  $u < 0$,
and $\max_{x \in \R^N} f(x,u) < 0$ for  $u > 1$.
This extension does not affect the behavior of pulsating waves connecting $0$ and $1$ defined below.

\begin{definition} \label{defi-pulsating}
For every $L>0$ and every $e \in \mathbb{S}^{N-1}$,	a pair $(\phi_{L,e},c_{L,e})$ with $\phi_{L,e}: \R \times \R^N \to (0,1)$ and $c_{L,e} \in \R$ is said to be a \textbf{pulsating wave} of \eqref{equation} with effective speed $c_{L,e}$ in the direction $e$ connecting $0$ and $1$ if the following two conditions are satisfied:
	\begin{enumerate}
		\item[{\rm (i)}] The function $U_{L,e}(t,x) := \phi_{L,e}(x \cdot e -c_{L,e}t,x/L)$ is an entire $($classical$)$ solution of the parabolic equation \eqref{equation};
		
		\item[{\rm (ii)}] The profile $\phi_{L,e}(\xi,y)$ is $\Z^N$-periodic in $y\in \R^N$, and satisfies
		\begin{equation}\label{limit-condition}
			\lim\limits_{\xi \to - \infty} \phi_{L,e}(\xi,y) = 1 
			\quad \hbox{ and } \quad 
			\lim\limits_{\xi \to +\infty} \phi_{L,e}(\xi,y) = 0
			\quad \hbox{uniformly for }\, y \in \R^N.
		\end{equation}
	\end{enumerate} 
	In the sequel, we shall say that a pulsating  wave $(\phi_{L,e},c_{L,e})$ of equation \eqref{equation} is:
	\begin{enumerate}
		\item[{\rm (a)}] \textbf{A standing pulsating wave} if $c_{L,e} = 0$;
		\item[{\rm (b)}] \textbf{A moving pulsating wave} if $c_ {L,e} \neq 0$.
	\end{enumerate}
\end{definition} 

The notion of pulsating waves was first introduced in \cite{skt,x1} as a natural extension of classical traveling waves $u(t,x)=\phi(x\cdot e-ct)$ in homogeneous media. However, unlike the classical notion, moving and standing pulsating waves need to be distinguished, as explained below.

If $U_{L,e}(t,x)= \phi_{L,e}(x \cdot e -c_{L,e}t,x/L)$ is a moving pulsating wave in the direction $e$, then the map $(t,x) \mapsto (x \cdot e - c_{L,e}t ,x/L)$ is a bijection from $\R\times\R^N$ to $\R\times \R^N$. Consequently, the profile $\phi_{L,e}$ is uniquely determined by $U_{L,e}$ via
\begin{equation}\label{phi-U}
\phi_{L,e}(\xi,y)=U_{L,e}\left(\frac{Ly\cdot e - \xi}{c_{L,e}},Ly\right)\,\,\hbox{ for all }\,\, (\xi,y)\in\R\times\T^N,  
\end{equation}
where $\T^N:=\R^N/\Z^N$ is the $N$-dimensional torus. In this setting, $\phi_{L,e}(\xi,y)$ satisfies the limit condition \eqref{limit-condition}, and is a classical solution of the semilinear degenerate elliptic equation 
\begin{equation}\label{front equation}
		\tilde{\nabla}_{L,e} \cdot (A(y)\tilde{\nabla}_{L,e} \phi_{L,e}) + B(y) \cdot \tilde{\nabla}_{L,e} \phi_{L,e} + c_{L,e} \partial_\xi \phi_{L,e} + f(y,\phi_{L,e}) = 0,\quad  (\xi,y) \in \R \times \T^N, 
\end{equation}
where 
\begin{equation*}
	\tilde{\nabla}_{L,e} = e \partial_\xi + \frac{1}{L}\nabla_y.
\end{equation*}
Reciprocally, the entire solution $U_{L,e}(t,x)$ of \eqref{equation} satisfies
\begin{equation*}
	U_{L,e}\left(t+ \frac{z\cdot e}{c_{L,e}},x+z\right)=U_{L,e}\left(t,x\right) \quad \hbox{for all }\, (t,x,z)\in\R\times\R^N\times L\Z^N,
\end{equation*}
together with the asymptotic conditions
$$\lim_{r\to -\infty} U_{L,e}(t,re+x)=1 \quad \hbox{and}\quad \lim_{r\to +\infty} U_{L,e}(t,re+x)=0, $$
where the convergences hold locally uniformly in $t\in\R$ and uniformly in $x\in e^{\perp}:= \{x\in\R^N: x\cdot e=0\}$.

The existence of moving pulsating waves for bistable equations in periodic media has been extensively studied. In the absence of advection, namely $B=0$, perturbative existence results were obtained in \cite{x1,fz,dg}, while homogenization results for equations with constant coefficients in rapidly oscillating perforated domains were established in \cite{he}. The small- and large-period regimes were studied for one-dimensional equations in \cite{dhz1}, and multidimensional existence results were obtained in \cite{du}. The homogenization and asymptotic behavior of pulsating waves and wave speeds as $L\to0^+$ were investigated in \cite{hps,ds}, and the large-$L$ behavior in one dimension was recently studied in \cite{dhl}. More general moving fronts were also studied in \cite{m,n2,nr,z} within the framework of generalized transition waves in the sense of \cite{bh12}. However, much less is known in the presence of advection. To our knowledge, Xin obtained existence results for periodic advection under the assumptions that the drift is divergence-free with zero mean and that the heterogeneity and drift are sufficiently small; see \cite{x2} and the review \cite{x3}. Beyond such special settings, the existence of bistable pulsating waves with general advection remains largely open. On the other hand, once a moving pulsating wave exists, its uniqueness up to translations in time is known in several of these settings; see, e.g., \cite{dhz1,gr25,m,x1,x2,x3}.

The standing case is more delicate. When the wave speed $c_{L,e}=0$, a standing pulsating wave $U_{L,e}(t,x)=\phi_{L,e}(x\cdot e,x/L)$ is a stationary solution of \eqref{equation} connecting $0$ and $1$ in the sense of \eqref{limit-condition}. In such a situation, the map $(t,x)\mapsto(x\cdot e-c_{L,e}t,x/L)$ is no longer invertible. Consequently, a standing wave does not determine its profile away from the set $\{(x\cdot e,x/L):x\in\R^N\}$.
Even a $C^2$ periodic representation of a standing wave need not satisfy \eqref{front equation} on the whole cylinder $\R\times\T^N$.
This is why standing pulsating waves need to be treated separately.

A broader class of stationary waves has been studied in connection with wave-blocking phenomena for bistable equations; see \cite{dhz2,dll,dr,k,n2,x2,z} and references therein. Following \cite{du}, we call such a wave a {\bf standing transition wave}, namely, a stationary solution
$U_{L,e}\in C^2(\R^N)$ of \eqref{equation} satisfying only
$$
\lim_{r\to-\infty}\inf_{x\in e^\perp}U_{L,e}(re+x)=1,
\qquad
\lim_{r\to+\infty}\sup_{x\in e^\perp}U_{L,e}(re+x)=0.$$
Every standing pulsating wave gives rise to a standing transition wave, but the converse is false in general; see \cite{x2} and \cite[Remark 1.7]{du}. Even in one spatial dimension, where any standing transition wave can formally be written in pulsating form, such a representation is not unique and the corresponding profile need not be a classical solution of \eqref{front equation}. Moreover, standing transition waves themselves need not be unique up to periodic shifts; see Remark \ref{rem-nonuniqueness} below. Therefore, uniqueness in the standing case is substantially more delicate than in the moving case.

We also mention that, in the absence of advection, pulsating waves exist under the abstract bistable assumption that no $L$-periodic stable state lies strictly between $0$ and $1$; see \cite{du,dgm,fz,gr20}. These results generally do not determine whether the wave is moving or standing. An exception is \cite{dgm}, where a positive-speed pulsating front is obtained under the additional assumption that some compactly supported initial data give rise to
solutions converging locally uniformly to $1$ as $t\to+\infty$.

These considerations lead to three problems addressed in this paper. First, we study the existence of moving pulsating waves in the presence of general advection and prove their existence and homogenization in the small-period regime. Second, we study the uniqueness of pulsating waves for arbitrary periods, including the standing case, under a natural regularity assumption on the wave profile. Finally, for the small-period moving waves, we derive a sharp convergence rate and an explicit first-order expansion of the wave speed as $L\to0^+$.


\subsection{Existence and homogenization of moving pulsating waves for small $L>0$}
In this section, we present the existence of moving pulsating waves for \eqref{equation} when $L>0$ is small under the assumption that the homogenized equation admits a classical traveling wave connecting $0$ and $1$ with nonzero speed. To present the homogenized equation, let us first introduce some notations. Denote by $A^{{\rm hom}} \in \Mcal_N$  the homogenized diffusion matrix of $A$, defined by 
\begin{equation}\label{a harmonic mean}
	A^{{\rm hom}} = \int_{\T^N}A(I_N + \nabla \chi)dy,
\end{equation}
where $I_N$ is the identity matrix, and $\chi : \T^N \to \R^N$ is the unique solution of the cell problem
\begin{equation}\label{Acorrector}
	\nabla \cdot (A(I_N + \nabla \chi)) = 0 \,\,\hbox{ in } \,\,\T^N, \quad \hbox{and}\quad \int_{\T^N} \chi dy=0.
\end{equation}
We also introduce the effective drift associated with $A$, denoted by $\langle B \rangle_{A}$, which is defined by  
\begin{equation}\label{homogenized limit w.r.t. A}
	\langle B \rangle_{A}=  \int_{\T^N}  (B-A^T \nabla \zeta) dy, 
\end{equation}
where $A^T$ is the transpose of $A$, and $\zeta:\T^N\to \R$ is the unique solution of 
\begin{equation}\label{Bcorrector}
	\nabla \cdot (A^T \nabla \zeta - B) = 0 \,\,
	\hbox{ in } \,\, \T^N, \quad\hbox{and}\quad \int_{\T^N} \zeta dy=0.
\end{equation} 
The functions $\chi$ and $\zeta$ are standard objects in periodic homogenization theory (see e.g., \cite{blp,jikov}). More precisely,  $\chi$ is the corrector accounting for the microscopic oscillations of the diffusion flux, while the corrector $\zeta$ arises from the coupling between the heterogeneous diffusion and drift term.
Since $A$ satisfies the uniform elliptic condition \eqref{uniform-elliptic}, the homogenized matrix $A^{{\rm hom}}$ inherits the same uniform elliptic property (see \cite{blp,jikov}). In the special one-dimensional case, where $A$ and $B$ are scalar-valued, one directly verifies that
$A^{{\rm hom}}$ and $\langle B \rangle_{A}$ reduce respectively to
$$A^{{\rm hom}}=\left(\int_0^1 A^{-1}(y)dy\right)^{-1}\quad \hbox{and}\quad \langle B \rangle_{A} =A^{{\rm hom}} \int_{0}^1 B(y)A^{-1}(y) dy.$$ 
As for the reaction term $f(y,u)$, we denote by $\bar{f}$ its arithmetic mean with respect to $y$, namely,
\begin{equation*}
	\bar{f}(u) = \int_{\T^N} f(y,u)dy \quad
	\hbox{for } \, u \in [0,1].
\end{equation*}
Then, $\bar{f}:[0,1]\to \R$ is of class $C^{ 3}$ and it follows from \eqref{f-zero} and \eqref{strong stability} that
\begin{equation}\label{property-barf}
\bar{f}(0)=\bar{f}(1)=0 \quad\hbox{and}\quad \bar{f}'(0)\leq -\gamma_0,\quad 	\bar{f}'(1)\leq -\gamma_0.
\end{equation} 
With these notations, the homogenized equation takes the form
\begin{equation}\label{homo-equation}
u_t=	\nabla \cdot (A^{{\rm hom}}\nabla u) + \langle B \rangle_{A} \cdot \nabla u + \bar{f}(u),\quad t\in\R,\;x\in\R^N.
\end{equation}

Assume that 
\begin{itemize}
	\item [\rm \bf (H1)] For each $e\in \mathbb{S}^{N-1}$, the homogenized equation \eqref{homo-equation} admits a traveling wave $u(t,x)=\phi_{0,e}(x\cdot e-c_{0,e}t)$ connecting $0$ and $1$ in the sense that $\phi_{0,e}(-\infty)=1$ and $\phi_{0,e}(+\infty)=0$, with speed $c_{0,e}\in\R$.
\end{itemize}

One readily checks that the profile $\phi_{0,e}(\cdot)$ satisfies the ODE
\begin{equation}\label{homo-wave}
	\begin{cases}
		(A^{{\rm hom}}e \cdot e)\phi_{0,e}'' + (\langle B \rangle_{A}\cdot e + c_{0,e})\phi_{0,e}' + \bar{f}(\phi_{0,e}) = 0 \quad \hbox{in }\, \R, \vspace{5pt}\\
		\phi_{0,e}(-\infty) = 1, \ \phi_{0,e}(+\infty) = 0.
	\end{cases} 
\end{equation}
Thanks to \eqref{property-barf}, namely, $0$ and $1$ are stable zeros of $\bar{f}$, it is well known  that for each given direction $e$, the speed $c_{0,e}$ is unique and the profile is necessarily decreasing and unique up to shifts (see e.g., \cite{aw,fm,hr24}). Moreover, $c_{0,e}$ has the sign of $\int_0^1\bar{f}(u)du$ provided that $\langle B \rangle_{A}\cdot e=0$. We also point out that both $c_{0,e}$ and $\phi_{0,e}$ depend on the direction $e$ explicitly in terms of an isotropic traveling wave (see the proof of Lemma \ref{continuity-homo-wave} below).

Note that condition (H1) is automatically fulfilled if the function $\bar{f}:[0,1]\to \R$ is of the bistable type, that is, in addition to \eqref{property-barf}, there exists a unique $\theta_0\in (0,1)$ such that $\bar{f}(\theta_0)=0$. It is also satisfied for some functions $\bar{f}$ having multiple oscillations in the interval $[0,1]$ (see \cite{fm,po,dingll}).  
In particular, our assumptions do not require $f(y,\cdot)$ to have a unique zero in $(0,1)$, and multiple intermediate zeros are allowed for each $y\in\T^N$.

\begin{theorem}\label{existence small period}
Let {\rm (H1)} hold. Then, the following statements hold.
\begin{itemize}
	\item [{\rm (i)}] For any $e \in \mathbb{S}^{N-1}$ such that $c_{0,e}\neq 0$,
	there exists $L_*(e)>0$ such that for every $0<L<L_*(e)$, equation \eqref{equation} admits a pulsating wave $U_{L,e}(t,x)=\phi_{L,e}(x\cdot e-c_{L,e}t,x/L)$ with speed $c_{L,e}\neq 0$ in the direction $e$.  Moreover, after a suitable shift of $\phi_{0,e}$, it holds that 
	\begin{equation}\label{convergence-phiL}
		\phi_{L,e} - \phi_{0,e} \to 0 \,\, \hbox{ in }\,\, H^{1}(\R \times \T^N)\quad\hbox{and} \quad c_{L,e}\to c_{0,e} \quad \hbox{as }\; L\to 0^+.
	\end{equation}
\item [{\rm (ii)}] If $c_{0,e}\neq 0$ for every direction $e \in \mathbb{S}^{N-1}$, then there exists $L_*>0$, independent of $e$, such that the conclusion in {\rm (i)} holds for every $e \in \mathbb{S}^{N-1}$ and every $0<L<L_*$. Moreover, the convergence \eqref{convergence-phiL} is uniform with respect to $e$. 
\end{itemize} 
\end{theorem}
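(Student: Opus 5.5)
We construct $(\phi_{L,e},c_{L,e})$ as a perturbation of the homogenized wave by a Lyapunov--Schmidt / implicit-function-type fixed point argument on the cylinder $\R\times\T^N$, working directly with the degenerate elliptic equation \eqref{front equation}. Fix $e$ with $c_{0,e}\neq0$ and normalize $\phi_{0,e}$ by a phase condition, e.g. $\phi_{0,e}(0)=\theta$ for some fixed $\theta$, or orthogonality to $\phi_{0,e}'$.

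\emph{Approximate solution.} We build a two-scale ansatz
\[
\Phi_L(\xi,y)=\phi_{0,e}(\xi)+L\big(\chi(y)\cdot e+\zeta(y)\big)\phi_{0,e}'(\xi)+L\,\psi_1(\xi,y),
\]
with $\chi,\zeta$ the correctors from \eqref{Acorrector}--\eqref{Bcorrector}. The first-order reaction corrector $\psi_1$ solves
\[
\nabla_y\cdot(A\nabla_y\psi_1)+B\cdot\nabla_y\psi_1=-L\big(f(y,\phi_{0,e})-\bar f(\phi_{0,e})\big)
\]
on $\T^N$; this is solvable because $f-\bar f$ has zero mean against the invariant measure. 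If the adjoint measure is not Lebesgue, we replace $\bar f$ by the correct weighted mean. However, \eqref{homo-equation} uses the arithmetic mean, which signals that the $1/L^2$ scaling makes the cell operator $\nabla_y\cdot(A\nabla_y)+L B\cdot\nabla_y$, so at leading order only the divergence part matters. Then $\psi_1$ is in fact $O(L)$ and it is better to absorb it at order $L^2$. We choose the ansatz so that, upon inserting into \eqref{front equation} with $c=c_{0,e}$, the $O(L^{-2})$ and $O(L^{-1})$ terms cancel by the cell problems. The $O(1)$ term then reproduces \eqref{homo-wave}, and the residual is $O(L)$ in $L^2(\R\times\T^N)$, using the exponential decay of $\phi_{0,e}^{(k)}$.

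\emph{Linearized operator.} Write $\phi=\Phi_L+w$ and $c=c_{0,e}+\sigma$. The key step is a uniform (in small $L$) invertibility estimate for
\[
\mathcal L_L w=\tilde\nabla_{L,e}\cdot(A\tilde\nabla_{L,e}w)+B\cdot\tilde\nabla_{L,e}w+c_{0,e}\partial_\xi w+\partial_uf(y,\Phi_L)w,
\]
restricted to $w\perp\phi_{0,e}'$, with $\sigma$ chosen to kill the projection onto the cokernel direction. We aim for an estimate of the form
\[
\|w\|_{H^1_L}+|\sigma|\le C\|\mathcal L_Lw+\sigma\partial_\xi\Phi_L\|_{L^2}
\]
in a scaled norm controlling $\|\tilde\nabla_{L,e}w\|_{L^2}$. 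We prove this by contradiction. Assuming a sequence $L_n\to0$ and normalized $w_n$ violating it, the bound on $\tilde\nabla_{L_n,e}w_n$ forces $L_n^{-1}\nabla_yw_n$ to be bounded, so $w_n$ is asymptotically independent of $y$. Two-scale convergence with test functions built from $\chi,\zeta$ then passes to the limit and yields a solution of the homogenized linearized ODE operator $\mathcal L_0$, which is Fredholm of index zero with kernel spanned by $\phi_{0,e}'$ (since $0,1$ are stable and $c_{0,e}$ is isolated); the phase condition kills this kernel, giving $w=0$. We must still rule out loss of compactness, i.e. mass escaping to $\xi\to\pm\infty$. This is handled by the stability \eqref{extension}: $\partial_uf\le-\gamma_0$ near $0,1$ gives coercivity outside a compact $\xi$-interval. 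The nonsymmetry of $A$ and the drift enter only through first-order terms, handled in the energy estimate by Gårding's inequality and the $\epsilon$-absorption of $c_{0,e}\partial_\xi w$.

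\emph{Fixed point and consequences.} With the uniform inverse and the $O(L)$ residual, a contraction mapping in the ball of radius $CL$ (the nonlinearity is quadratic since $f\in C^3$, controlled using the anisotropic Sobolev embedding on $\R\times\T^N$ after mild regularity bootstrapping) gives $(w,\sigma)$ with $\|w\|_{H^1}+|\sigma|=O(L)$. This yields \eqref{convergence-phiL} and $c_{L,e}\neq0$ for small $L$. We then check $0<\phi<1$ and the limits \eqref{limit-condition} by the comparison principle and exponential decay, and obtain a classical solution via parabolic regularity for $U_{L,e}$. For (ii), Lemma \ref{continuity-homo-wave} gives continuity of $(c_{0,e},\phi_{0,e})$ in $e$, and compactness of $\mathbb S^{N-1}$ with $\min_e|c_{0,e}|>0$ makes all constants above uniform. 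In the contradiction argument we simply also let $e_n\to e$. The main obstacle is the uniform invertibility of $\mathcal L_L$ for this degenerate operator: it is elliptic only in the combined direction $e\partial_\xi+L^{-1}\nabla_y$, and the argument succeeds only because $c_{0,e}\neq0$ supplies the $\partial_\xi$ transport term. I would first try to prove it via the blow-up/two-scale contradiction argument above.
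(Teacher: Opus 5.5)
Your route (two-scale ansatz, uniform invertibility of the bordered linearization by blow-up, then a contraction) differs from the paper's. The step you single out as the crux does not go through as sketched, for two concrete reasons.

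\emph{Missing $\partial_\xi$ control.} A bound on $\tilde\nabla_{L,e}w_n$ does not make $L_n^{-1}\nabla_yw_n$ bounded, since $L^{-1}\nabla_yw=\tilde\nabla_{L,e}w-e\,\partial_\xi w$. Take $e=e_1$ and $w=h(\xi)\sin(2\pi(y_1-\xi/L))$. Then $\tilde\nabla_{L,e}w=h'(\xi)\sin(2\pi(y_1-\xi/L))\,e_1$, so these functions are bounded in any norm built from $w$ and $\tilde\nabla_{L,e}w$. Yet $\partial_\xi w$ and $L^{-1}\nabla_yw$ are of size $L^{-1}$, and $w\rightharpoonup0$ without converging strongly in $L^2_{\rm loc}$. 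Hence you have neither $y$-independence at rate $L$ nor local compactness in $L^2$, and your contradiction needs the latter to upgrade $w_0=0$ to $w_n\to0$. Both require a uniform bound on $\|\partial_\xi w\|_{L^2}$; if you put $\|\partial_\xi w\|$ into the norm, you must prove exactly that bound. These modes are excluded only because $c\,\partial_\xi w\approx-2\pi cL^{-1}h\cos(\cdot)$ is large, so this is precisely where $c_{0,e}\neq0$ must enter quantitatively. For nonsymmetric $A$, testing with $\partial_\xi w$ leaves a term from the antisymmetric part of $A$. The paper controls it by the partial Fourier transform in $\xi$ and the imaginary part of the energy identity (Lemma \ref{solvability-M}).

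\emph{Tail coercivity fails for large drift.} G{\aa}rding plus $\partial_uf\le-\gamma_0$ only gives $\frac{\eta_0}{2}\|\tilde\nabla_{L,e}w\|^2+(\gamma_0-\|B\|_\infty^2/(2\eta_0))\|w\|^2$. The deficit is genuine, because $\int(B\cdot\tilde\nabla_{L,e}w)w=-\frac{1}{2L}\int(\nabla_y\cdot B)w^2$. For instance, take $N=1$, $A\equiv1$, $\partial_uf\equiv-\gamma_0$, and $w=h(\xi)(1+Lv(y))$ with $v'=(B-\bar B)/2$, $\bar B=\int_0^1B$. Then $-\int(\mathcal L_Lw)w\approx\|h'\|_{L^2}^2+(\gamma_0-\frac14\|B-\bar B\|_{L^2(\T)}^2)\|h\|_{L^2}^2$. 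This is negative for slowly varying $h$ supported in $\{|\xi|>R\}$ as soon as the oscillating part of $B$ is large compared with $\gamma_0$.

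The paper avoids both issues by never proving coercivity of the linearization itself. It shifts by $\beta\ge\|B\|_\infty^2/\eta_0$, so that $M_{c,L,e}$ is uniformly invertible together with the $\partial_\xi$ bound. It proves strong homogenization of $M_{c,L,e}^{-1}$ (Lemma \ref{Homogenization}). It proves operator-norm continuity of $\partial_{(v,c)}G$ at $L=0$ via the corrector $w_n-w_0-L_n(\chi\cdot e_n)w_0'$ and the extra $\xi$-regularity of $H^1$ inputs (Lemma \ref{continuity-operotor}). It gets invertibility at $L=0$ from the Fredholm theory of $H_e$, and concludes with the implicit function theorem.

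To rescue your blow-up you would need the $\partial_\xi$ estimate above and a weighted tail estimate. For example, test with $w\,\theta(\xi)^2m_L(y)$, where $m_L>0$ spans the kernel of the adjoint of $\nabla_y\cdot(A\nabla_y\,\cdot)+LB\cdot\nabla_y$ on $\T^N$ (so $m_L=1+L\zeta+O(L^2)$); this makes the drift drop out of the energy identity up to $\theta'$-terms.

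Two smaller points. First, the term $L\zeta\phi_{0,e}'$ in your ansatz leaves the uncancelled $O(L^{-1})$ residual $L^{-1}\nabla_y\cdot(A\nabla_y\zeta)\,\phi_{0,e}'$. The first-order corrector is $\chi_1\phi_{0,e}'$, up to a $y$-independent term, and an $O(L)$ residual in $L^2$ requires the second-order correctors $L^2(\chi_2\phi_{0,e}''+\eta_1\phi_{0,e}'+\psi)$ of Section 4. Second, the quadratic bound $\|w^2\|_{L^2}\lesssim\|w\|_{H^1}^2$ needs $N\le3$, so the contraction should rely on the $C^1$ Fr\'echet property of Lemma \ref{frechet-g} instead.

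Once a uniform estimate is available, your treatment of (ii) by uniform constants would be fine. It would even spare the appeal to Theorem \ref{theo-unique} that the paper uses to glue local branches.
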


Several remarks are in order. First, statement {\rm (i)} gives the existence of moving pulsating waves 
in a given direction $e$ for all sufficiently small $L>0$, provided that $c_{0,e}\neq 0$. 
The nonvanishing assumption on the limiting speed is essential for the present result. 
When $c_{0,e}=0$, one may instead expect that, for small $L>0$, equation \eqref{equation} admits either a moving or a standing pulsating wave. However, the standing-wave case is substantially more delicate, since multiple standing waves and heteroclinic connections between them may occur.  A more complete description of the propagation dynamics in this regime will be addressed in a forthcoming work.

Statement {\rm (ii)} further gives a common small-period regime for all propagation directions, together with convergence to the homogenized waves uniformly in $e$. To our knowledge, this uniform-in-direction homogenization result does not appear in the previous literature, even in the absence of advection. In the presence of advection, however, the assumption that $c_{0,e}\neq0$ for every $e\in\mathbb S^{N-1}$ imposes a restriction on the directional effect of the drift. Indeed, a sufficiently strong drift may reverse the sign of the wave speed in some directions while leaving it unchanged in others. 
For $N\geq 2$, since the sphere $\mathbb S^{N-1}$ is connected and the map $e\mapsto c_{0,e}$ is continuous (see Lemma \ref{continuity-homo-wave}), such a change of sign necessarily produces an intermediate direction in which the homogenized wave speed vanishes.  Hence, statement (ii) does not cover regimes in which the advection is strong enough to induce such direction-dependent sign changes.

The proof of Theorem \ref{existence small period} is based on the implicit function theorem in suitable Banach spaces, inspired by \cite{he,dhz1} for equations without advection. Heinze \cite{he} first developed this approach for
equations with constant diagonal diffusion and spatially homogeneous reaction terms in periodically perforated domains, and it was later adapted in \cite{dhz1} to one-dimensional equations with spatially dependent diffusion and reaction terms. 
Our setting is more delicate because of the advection term $B$, the possible lack of symmetry of $A$, and the uniformity with respect to $e$. To overcome these difficulties, we develop new energy-estimate techniques for the associated degenerate linear problems, and treat $L$ and $e$ simultaneously as parameters in the implicit-function argument.

Finally, \eqref{convergence-phiL} shows the convergence of moving pulsating waves of \eqref{equation} to the traveling wave of the homogenized equation \eqref{homo-equation} as $L\to0^+$. We refer to \cite{ckm1,ckm2,he1} for related homogenization results on pulsating waves in combustion-type equations, and to \cite{e,ehr} for the homogenization of minimal wave speeds in Fisher-KPP equations. In those settings, moving pulsating waves are known to exist for all $L>0$ (see \cite{bh02,x92}), so the homogenization analysis can build on this prior existence theory. Moreover, these works concern equations with symmetric diffusion and either no advection or divergence-free advection with zero mean. In contrast, we consider bistable equations with possibly nonsymmetric diffusion and general advection, for which even the existence of moving pulsating waves is nontrivial.


\subsection{Uniqueness of pulsating waves for general $L>0$}
In this section, for a fixed oscillation parameter $L>0$, not necessarily small, and a fixed propagation direction $e\in\mathbb{S}^{N-1}$, we study the uniqueness of pulsating waves of \eqref{equation} under the following existence and regularity assumption.

\begin{itemize}
	\item [\rm \bf (H2)] Equation \eqref{equation} admits a pulsating wave $U_{L,e}(t,x)=\phi_{L,e}(x\cdot e-c_{L,e}t, x/L)$ connecting $0$ and $1$ in the direction $e$, and $(\phi_{L,e},c_{L,e})$ is a classical solution of \eqref{front equation}.   
\end{itemize}

As mentioned earlier, if $(\phi_{L,e},c_{L,e})$ is a moving pulsating wave, then the parabolic regularity of \eqref{equation} automatically implies that it is a classical solution of \eqref{front equation}. 
For a standing wave, however, this conclusion does not follow from smoothness of the profile alone. In that case, (H2) additionally requires compatibility with the full profile equation on $\R\times\T^N$, and hence provides a family of stationary solutions under arbitrary phase shifts. The next theorem establishes uniqueness relative to this reference family.

\begin{theorem}\label{theo-unique}
	Let {\rm (H2)} hold.  Assume that $\tilde{U}_{L,e}(t,x)=\tilde{\phi}_{L,e}(x\cdot e-\tilde{c}_{L,e}t, x/L)$ is another pulsating wave of \eqref{equation} connecting $0$ and $1$ in the direction $e$. Then $\tilde{c}_{L,e} = c_{L,e}$. 
	Furthermore, the following statements hold.
	\begin{itemize}
		\item [{\rm (i)}] If $c_{L,e}\neq 0$, then $\tilde{\phi}_{L,e}(\xi,y)=\phi_{L,e}(\xi+\tau_0,y)$ in $\R\times \T^N$ for some $\tau_0\in\R$;
		\item [{\rm (ii)}] If $c_{L,e}= 0$ and $\tilde{\phi}_{L,e}\in C(\R\times\T^N)$, then $\tilde{\phi}_{L,e}(x\cdot e,x/L) = \phi_{L,e}(x\cdot e + \tau_0,x/L)$ in $\R^N$ for some $\tau_0\in\R$. 
	\end{itemize}
\end{theorem}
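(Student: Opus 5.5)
We argue by the sliding method together with comparison principles, in the spirit of Berestycki--Hamel and of the uniqueness proofs for bistable pulsating fronts in \cite{dhz1,x1}. Hypothesis (H2) is what makes this possible in the standing case. Since $(\phi_{L,e},c_{L,e})$ solves \eqref{front equation} on the whole cylinder $\R\times\T^N$, every phase shift gives an entire solution
\[
U^{s,\tau}(t,x):=\phi_{L,e}(x\cdot e-c_{L,e}t+\tau,\,x/L+s),\qquad s\in\R^N,\ \tau\in\R .
\]
In particular the profile is available on the whole cylinder and not only on the set $\{(x\cdot e,x/L)\}$. The first preliminary step is to establish the exponential decay of $\phi_{L,e}$ and $\tilde\phi_{L,e}$ to $0$ and $1$ as $\xi\to\pm\infty$, uniformly in $y$. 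This follows from \eqref{strong stability} and \eqref{extension}: $f$ is uniformly decreasing near $0$ and near $1$, and comparison with exponential barriers gives the decay. For $\tilde U_{L,e}$ we only use that it is a classical entire solution of \eqref{equation} with the limits \eqref{limit-condition}.

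\emph{Step 1 (equality of speeds).} Suppose, say, $\tilde c_{L,e}>c_{L,e}$. Using the limits, together with the perturbation trick $\phi(\xi\mp\sigma(t),\cdot)\pm q(t)$ with $q(t)=q_0e^{-\delta t}$ and $\sigma'$ proportional to $q$ (Fife--McLeod super/subsolutions, built from $U^{0,\tau}$), we compare at a time $t_0$ with a large shift $\tau_1$. This gives $\tilde U_{L,e}(t,x)\le U^{0,\tau_1}(t-t_0,x)+q(t-t_0)$ for $t\ge t_0$. The super/subsolution inequality needs $\partial_\xi\phi_{L,e}<0$, or at least a lower bound of $-\partial_\xi\phi_{L,e}$ on compact $\xi$-sets. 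I would get this first by sliding $\phi_{L,e}$ against its own $\xi$-shifts via $U^{0,\tau}$: these are genuine solutions under (H2), so monotonicity in $\xi$ follows by comparison and the strong maximum principle. Now evaluate along $x=re$ with $r=\tilde c t+\text{const}$. The left-hand side stays near a fixed level in $(0,1)$, while the right-hand side tends to $0$ because $r-c t\to+\infty$. This is a contradiction. The symmetric argument rules out $\tilde c_{L,e}<c_{L,e}$. Note that this step only uses that $\tilde U_{L,e}$ is a solution, and the standing case $c_{L,e}=0$ is included.

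\emph{Step 2 (profile uniqueness).} Once $\tilde c=c$, sandwich $\tilde U_{L,e}$ between two shifts $U^{0,\tau}$, which is possible by the limits and exponential tails. Define
\[
\tau^*=\inf\{\tau:\ \tilde U_{L,e}\le U^{0,\tau}\ \text{on }\R\times\R^N\}.
\]
The standard sliding argument then applies. If $\tilde U\not\equiv U^{0,\tau^*}$, the strong maximum principle gives strict inequality. Far from the front, the stability of $0$ and $1$ allows decreasing $\tau$ slightly. In the bounded middle region, a compactness argument allows the same decrease, using periodicity in $y$ and in the moving-wave case the quasi-periodicity in $t$. This contradicts minimality, so $\tilde U\equiv U^{0,\tau^*}$.

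In case (i), the identity transfers to the profiles via the bijection \eqref{phi-U}.

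In case (ii), the compactness step is the delicate one. The points where $\tilde U$ and $U^{0,\tau}$ nearly touch must have $x\cdot e$ bounded, and this needs uniform limits in $x\in e^\perp$. Those limits follow from the continuity of $\tilde\phi_{L,e}$ on the compact set $[\!-R,R]\times\T^N$ together with \eqref{limit-condition}. Continuity also ensures that limits of translates $\tilde U(\cdot+z_n)$ are again of the form $\tilde\phi_{L,e}(x\cdot e+\cdot,\,x/L+y_\infty)$. We compare them with $U^{y_\infty L,\tau^*}$, which is a solution by (H2). This is where the phase family in (H2) is indispensable: after extracting limits along non-lattice translations, the reference solution must also be shifted in $y$, and continuity guarantees that the limiting comparison inequality persists and the strong maximum principle yields touching. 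We expect this step to be the main obstacle; we would first treat lattice translations separately and then use density of $\{x/L \bmod \Z^N\}$ on the relevant subtorus. The conclusion $\tilde\phi_{L,e}(x\cdot e,x/L)=\phi_{L,e}(x\cdot e+\tau_0,x/L)$ then holds on $\R^N$.
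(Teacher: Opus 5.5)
There is a genuine gap in Step 1, in the case $c_{L,e}=0\neq\tilde c_{L,e}$. You build the Fife--McLeod super- and subsolutions from the (H2) wave $\phi_{L,e}$. You then assert that $\partial_\xi\phi_{L,e}<0$ follows from sliding $\phi_{L,e}$ against its own shifts $U^{0,\tau}$, ``the standing case included''.

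This is false for standing profiles. Take $N=L=1$, $e=1$, $A\equiv1$, $B\equiv0$, $f(u)=u(1-u)(u-\frac12)$ and $v(\xi)=(1+\me^{\xi/\sqrt2})^{-1}$. Then $\phi(\xi,y)=v(\xi+\sin(2\pi(\xi-y)))$ is a smooth classical solution of \eqref{front equation} with $c=0$, so it satisfies (H2). Yet $\partial_\xi\phi=v'(\cdot)\,(1+2\pi\cos(2\pi(\xi-y)))$ changes sign.

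The sliding argument breaks down for the following reason. When $c=0$ the shifts are stationary, and at the critical shift $\tau^*$ the strong maximum principle only gives $\phi(x\cdot e+\tau^*,x/L)\equiv\phi(x\cdot e,x/L)$ on the slice $\{(x\cdot e,x/L)\}$. That identity does not force $\tau^*=0$. In the example, $U^{0,\tau}(x)=v(x+\tau+\sin 2\pi\tau)$, so $\tau\mapsto U^{0,\tau}$ is not even monotone. Without a positive lower bound for $-\partial_\xi\phi$ in the transition region, the residual $-\sigma'\partial_\xi\phi+q'+f(y,\phi)-f(y,\phi+q)$ cannot be made nonnegative there. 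So neither your supersolution nor the comparison with $\tilde U_{L,e}$ is available. When $c\neq0$ your Step 1 is fine: it is a valid, more dynamical alternative to the paper's argument.

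The repair is to note that if $\tilde c_{L,e}\neq c_{L,e}$, at least one wave is moving, and to use monotonicity only of that wave. A moving profile is monotone and automatically a classical solution of \eqref{front equation}. The paper does exactly this, in two cases:
\begin{itemize}
\item If $\tilde c\neq0$, it slides $\tilde\phi(\cdot-\tau,\cdot)$ against $\phi$ directly on the cylinder. At a touching point the two profile equations give $(c-\tilde c)\partial_\xi\tilde\phi\ge0$, hence $c=\tilde c$, with no Fife--McLeod construction.
\item If $\tilde c=0$, it assumes $c>0$ for contradiction, so that $\phi$ is monotone, and compares $\phi(x\cdot e-ct+\tau,x/L)-\varepsilon$ with the stationary $\tilde U$.
\end{itemize}

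The same non-monotonicity affects your Step 2 in case (ii). The set of admissible shifts need not be an interval, and with your convention its infimum is $-\infty$ (you want a supremum). The sliding parameter should instead be defined by requiring the inequality for all $s$ on one side of $\tau$, as the paper does.

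Finally, $U^{s,\tau}$ with $s\notin\Z^N$ solves the equation with shifted coefficients, not \eqref{equation}. You need no non-lattice translations and no density argument. Write near-touching points as $z_\varepsilon+Ly_\varepsilon$ with $z_\varepsilon\in L\Z^N$ and $y_\varepsilon\in[0,1]^N$. Then $\tilde U(\cdot+z_\varepsilon)=\tilde\phi(x\cdot e+z_\varepsilon\cdot e,x/L)$, and continuity of $\tilde\phi$ identifies the limit.
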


Theorem \ref{theo-unique} gives, in particular, the uniqueness of the wave speed for \eqref{equation}.
When a moving pulsating wave exists, statement {\rm (i)}
gives the classical uniqueness of the wave profile up to
translations in $\xi$; see e.g.,
\cite{dhz1,gr25,x1,x2,x3}.

The situation is different for standing pulsating waves. In this case, statement {\rm (ii)} gives uniqueness of
the wave as a stationary solution in the original spatial variable $x$, but does not imply uniqueness of its profile
as a function of $(\xi,y)\in\R\times\T^N$. In fact, full profile uniqueness cannot be expected even
if both profiles are classical solutions of \eqref{front equation}. To illustrate this point, consider the one-dimensional
homogeneous equation with
$N=1$, $L=1$, $e=1$, $A\equiv1$, $B\equiv0$,
and $ f(u)=u(1-u)(u-1/2)$.
The function $v(\xi)=(1+e^{\xi/\sqrt{2}})^{-1}$
satisfies $v''+f(v)=0$ in $\R$, and $v(-\infty)=1$, $v(+\infty)=0$.
Define $\phi(\xi,y)=v(\xi)$ and 
$\tilde{\phi}(\xi,y)= v(\xi+\sin(2\pi(\xi-y)))$.
Clearly, $\phi,\tilde\phi\in C^\infty(\R\times\T)$. Moreover, both $\phi$ and $\tilde\phi$ are classical
solutions of the standing profile equation \eqref{front equation} and satisfy the required limits at $\xi=\pm\infty$ uniformly in $y$. They generate exactly the same stationary wave, since $\tilde\phi(x,x)=v(x)=\phi(x,x)$ for $x\in\R$. 
On the other hand, there is no $\tau\in\R$ such that
$\tilde\phi(\xi,y)= \phi(\xi+\tau,y)$ in $\R\times\T^N$.  Indeed, $\phi$ is independent of $y$,
whereas $\tilde\phi$ depends nontrivially on $y$. This example shows that, in the standing case, even
classical solutions of the profile equation need not be unique up to translations in $\xi$.

As an immediate consequence of Theorem \ref{theo-unique}, we have the following observation.

\begin{corollary}\label{uniqunee-samllL} 
Let $(\phi_{L,e},c_{L,e})$ be the moving pulsating wave of \eqref{equation} provided by Theorem {\rm \ref{existence small period}} for small $L>0$. Then, $c_{L,e}$ is the unique wave speed, and  $\phi_{L,e}(\xi,y)$ is unique up to translations in $\xi$.  
\end{corollary}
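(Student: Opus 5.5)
The corollary follows from Theorem \ref{theo-unique} once we check that hypothesis (H2) holds for the wave produced by Theorem \ref{existence small period}. Fix $e$ with $c_{0,e}\neq0$ (or any $e$ in the setting of Theorem \ref{existence small period}(ii)) and $0<L<L_*(e)$. Let $(\phi_{L,e},c_{L,e})$ be the pulsating wave given by Theorem \ref{existence small period}, so that $c_{L,e}\neq 0$.

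First, we check (H2). The function $U_{L,e}(t,x)=\phi_{L,e}(x\cdot e-c_{L,e}t,x/L)$ is an entire classical solution of \eqref{equation} connecting $0$ and $1$, by Definition \ref{defi-pulsating}. Since $c_{L,e}\neq0$, the change of variables $(t,x)\mapsto(x\cdot e-c_{L,e}t,x/L)$ is a bijection, and the profile is recovered from $U_{L,e}$ through \eqref{phi-U}. This representation is a smooth composition, so $\phi_{L,e}$ has the regularity of $U_{L,e}$: it is $C^{1}$ in $t$ and $C^{2}$ in $x$ by parabolic regularity, given $A\in C^2$, $B\in C^1$ and $f\in C^3$. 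The chain rule then shows that $\phi_{L,e}$ is a classical solution of \eqref{front equation} on all of $\R\times\T^N$, as the paper observes right after \eqref{phi-U}. Hence (H2) holds with this wave. If Theorem \ref{existence small period} instead produces $\phi_{L,e}$ directly as an $H^1$ or strong solution of \eqref{front equation}, the same argument applied through \eqref{phi-U} upgrades it to a classical solution.

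Second, we apply Theorem \ref{theo-unique}. Let $\tilde U_{L,e}(t,x)=\tilde\phi_{L,e}(x\cdot e-\tilde c_{L,e}t,x/L)$ be any pulsating wave of \eqref{equation} in the direction $e$ connecting $0$ and $1$. The first conclusion of Theorem \ref{theo-unique} gives $\tilde c_{L,e}=c_{L,e}$, so the wave speed is unique. Since $c_{L,e}\neq0$, statement (i) of that theorem gives $\tilde\phi_{L,e}(\xi,y)=\phi_{L,e}(\xi+\tau_0,y)$ for some $\tau_0\in\R$, so the profile is unique up to translations in $\xi$. In particular, any other moving wave is excluded from having a different speed, and no standing wave can exist for these $L$.

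The only point that needs care is the regularity in the first step, namely that a moving pulsating wave automatically satisfies \eqref{front equation} classically. This is the standard parabolic-regularity argument the paper already invokes, so we expect no real obstacle.
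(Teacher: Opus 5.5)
Your proposal is correct and matches the paper's argument: since $c_{L,e}\neq0$, the wave from Theorem \ref{existence small period} automatically satisfies (H2), and Theorem \ref{theo-unique} then gives uniqueness of the speed and, via part (i), uniqueness of the profile up to translations in $\xi$. One minor point: $U_{L,e}\in C^{1,2}$ alone makes $\phi_{L,e}$ twice differentiable only along the $\tilde\nabla_{L,e}$-directions, which suffices for \eqref{front equation}; full $C^2$ regularity in $(\xi,y)$, as used in the Hessian step of the sliding proof, comes from applying parabolic regularity also to the equation for $\partial_tU_{L,e}$, as the paper does in the proof of Theorem \ref{existence small period}.
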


\begin{remark}\label{rem-nonuniqueness}{\rm 
We point out that the uniqueness of standing pulsating waves stated in Theorem \ref{theo-unique} (ii) does not conflict with the non-uniqueness of standing transition waves observed in \cite{dhz2}. Indeed, \cite[Theorem 1.7]{dhz2} provides a one-dimensional example without advection in a slowly periodically oscillating medium, assuming that the function $\int_{0}^1f(x,u)du$ changes sign with respect to $x$. In this example, there exist multiple ordered standing transition waves together with heteroclinic orbits connecting them. Such a dynamical structure shows that uniqueness in the sense of
statement {\rm (ii)} cannot be expected in the broader class of standing transition waves. The key point is that these standing transition waves need not admit a classical periodic profile satisfying the standing profile equation
\eqref{front equation}.
}\end{remark}

Given the uniqueness of pulsating waves, it is natural to ask whether they are also globally asymptotically stable, as in the homogeneous case established in \cite{fm}. In the absence of advection, it was proved in \cite{dhz1} that any moving pulsating wave is globally asymptotically stable in one dimension. More recently, in higher dimensions, \cite{gr25} showed that, whenever a moving pulsating wave exists, its wave speed coincides with the asymptotic spreading speed of solutions to the corresponding Cauchy problem with front-like or compactly supported initial data. Nevertheless, the global asymptotic stability of pulsating waves in higher dimensions remains an open problem.


\subsection{Sharp convergence rate for wave speeds as $L\to 0^+$}
In this section, we fix a propagation direction $e\in\mathbb{S}^{N-1}$ such that $c_{0,e}\neq 0$ and study the convergence rate of the moving pulsating wave $(\phi_{L,e},c_{L,e})$ towards the homogeneous wave $(\phi_{0,e},c_{0,e})$ as $L\to 0^+$. This problem was previously studied in \cite{hps,ds} without advection, assuming the existence and uniqueness of moving pulsating waves with nonzero limiting speed.  Based on the existence and uniqueness results established above, we treat here the general case in the presence of advection. 
Since the direction $e$ remains fixed throughout this part, we suppress its dependence and simply write $(\phi_L,c_L)$ and $(\phi_0,c_0)$.

The following theorem gives an $O(L)$ convergence rate.

\begin{theorem}\label{theo-convergence}
Assume that {\rm (H1)} holds and $c_0\neq 0$. Let $(\phi_L,c_L)$ be the moving pulsating wave obtained in Theorem {\rm \ref{existence small period}} when $L>0$ is small. Then, there exists a constant $C>0 $, independent of $L$, such that after an appropriate shift of $\phi_0$, 
\begin{equation*}
		\|\phi_L - \phi_0\|_{H^{ 1}(\R \times \T^N)} \leq C L
		\quad \hbox{and} \quad 
		|c_L - c_0| \leq CL.
\end{equation*}		
\end{theorem}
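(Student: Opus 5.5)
The plan is to turn the qualitative convergence of Theorem \ref{existence small period} into a quantitative one. The idea is to build an approximate solution of the profile equation \eqref{front equation} using the correctors $\chi$ and $\zeta$, compute its residual, and then use the uniform invertibility of the linearized operator that underlies the implicit-function argument.

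\textbf{Approximate solution and its residual.} Set
$$\psi_L(\xi,y)=\phi_0(\xi)+L\big(\chi(y)\cdot e\,\phi_0'(\xi)\big)=:\phi_0+L\phi_1 .$$
Write $\tilde\nabla_{L,e}=e\partial_\xi+L^{-1}\nabla_y$ and expand the operator
$$\mathcal{F}_L(\phi,c)=\tilde\nabla_{L,e}\cdot(A\tilde\nabla_{L,e}\phi)+B\cdot\tilde\nabla_{L,e}\phi+c\partial_\xi\phi+f(y,\phi)$$
in powers of $L$. The $O(L^{-1})$ terms are $\nabla_y\cdot(A(e+\nabla_y\chi\, e))\phi_0'$, and these cancel by \eqref{Acorrector}. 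The $O(1)$ term equals
$$e\cdot A(e+\nabla\chi\,e)\,\phi_0''+\nabla_y\cdot(A e\,\phi_1')+B\cdot e\,\phi_0'+c_0\phi_0'+f(y,\phi_0).$$
This term is not zero pointwise in $y$. Its $y$-average does vanish, by \eqref{homo-wave} together with the definitions of $A^{\rm hom}$, $\langle B\rangle_A$ and $\bar f$. (The drift part must be rewritten via the adjoint corrector $\zeta$, and this is where $\langle B\rangle_A$ rather than $\int B$ appears.) Since the average vanishes, one can solve cell problems in $y$ to kill this zero-mean remainder. This requires adding a second corrector, which will be $L$ times a combination of $\phi_0'$, $\phi_0''$ and $\phi_0'(\ldots)$, and which also absorbs the $\zeta$-term and the oscillating part $f(y,\phi_0)-\bar f(\phi_0)$. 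The latter needs $w(y,u)$ with $\Delta$-type cell equation $\nabla_y\cdot(A\nabla_y w)=-(f(y,u)-\bar f(u))$. Hence I take
$$\psi_L=\phi_0+L\phi_1+L^2\phi_2 ,$$
where the $L^2$ factor makes it contribute at order $1$ after applying $L^{-2}\nabla_y\cdot A\nabla_y$. The residual is then $\mathcal{F}_L(\psi_L,c_0)=O(L)$ in $L^2(\R\times\T^N)$. This uses the exponential decay of $\phi_0',\phi_0'',\phi_0'''$ (stable endpoints) and the $C^2$/$C^3$ regularity of $A$, $B$, $f$, which make the correctors smooth. Moreover $\|\psi_L-\phi_0\|_{H^1}=O(L)$; note that the $y$-gradient part of $\psi_L$ is $O(1)$ in $\nabla_y$, but it enters the norm only through $\tilde\nabla$, and in the $H^1(\R\times\T^N)$ norm $\nabla_y(L\phi_1)=O(L)$.

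\textbf{Closing via the linearization.} Write $\phi_L=\psi_L+v$ and $c_L=c_0+\mu$, with the phase normalized by $\int(\phi_L-\psi_L)\phi_0'\,d\xi\,dy=0$; this fixes the shift of $\phi_0$. Then
$$\mathcal{M}_L(v,\mu):=\mathcal{L}_L v+\mu\,\partial_\xi\psi_L=-\mathcal{F}_L(\psi_L,c_0)-N_L(v,\mu),$$
where $\mathcal{L}_L$ is the linearization at $(\psi_L,c_0)$ and $N_L$ is quadratic. The key input is the uniform estimate
$$\|v\|_{H^1}+|\mu|\le C\|\mathcal{M}_L(v,\mu)\|_{L^2}$$
on the normalized subspace, for $L$ small. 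This is the invertibility that Theorem \ref{existence small period} was proved with (the energy estimates for degenerate linear problems in the implicit-function argument), and I will invoke it, adjusted to the new base point $\psi_L$, which differs from $\phi_0$ by $O(L)$ in the relevant norms. The quadratic term is controlled as $\|N_L\|_{L^2}\le C(\|v\|_{H^1}^2+|\mu|\,\|v\|_{H^1})$; this requires an $L^\infty$ bound on $v$, available from the parabolic/elliptic bounds on both profiles. A fixed-point argument, or uniqueness via Corollary \ref{uniqunee-samllL} together with the convergence $\|v\|_{H^1}+|\mu|\to0$ from \eqref{convergence-phiL}, then absorbs the nonlinearity and yields
$$\|v\|_{H^1}+|\mu|\le CL .$$

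\textbf{Main obstacle.} I expect the main obstacle to be this uniform-in-$L$ coercivity of $\mathcal{M}_L$ in $H^1(\R\times\T^N)$ with only an $L^2$ right-hand side. The operator is degenerate: it controls $\tilde\nabla_{L,e}v$ but not $\nabla_y v$ separately with good constants, and the $L^{-1}\nabla_y$ mixing must be exploited through a two-scale decomposition $v=\bar v(\xi)+\tilde v(\xi,y)$ with $\int_{\T^N}\tilde v\,dy=0$. Poincaré on $\T^N$ gives $\|\tilde v\|\lesssim L\|\tilde\nabla v\|$, and the mean part $\bar v$ is handled by the invertibility of the homogenized linearized ODE operator on $\{\phi_0'\}^\perp$. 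If a clean $L^2\to H^1$ bound is not directly available from the earlier proof, my first fallback is to measure the residual in the dual norm $H^{-1}$, where the oscillating $O(1)$ terms are automatically $O(L)$. That would let me stop at $\psi_L=\phi_0+L\phi_1$ and avoid the second corrector.
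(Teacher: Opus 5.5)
Your plan is essentially the paper's proof: the paper uses the same approximate profile $\phi_0+L\chi_1\phi_0'+L^2g$ with $g=\eta_1\phi_0'+\chi_2\phi_0''+\psi$ (second-order cell correctors plus the reaction corrector), gets an $O(L)$ residual in $L^2(\R\times\T^N)$, and closes with exactly the uniform invertibility you invoke (Lemmas \ref{solvability-M} and \ref{continuity-operotor}), applied via a mean-value inequality for $G$ between $(v_L,c_L)$ and $(L^2g,c_0)$ and with the phase fixed by the existing normalization $G_2=0$ rather than a new $L$-dependent shift. Two small corrections: the $O(1)$ drift term is $B\cdot(e+\nabla\chi_1)\phi_0'$ (you dropped $B\cdot\nabla\chi_1\,\phi_0'$), whose mean is already $\langle B\rangle_A\cdot e\,\phi_0'$ by Lemma \ref{lem-mean-B}; and your quadratic bound $\|N_L\|_{L^2}\le C\|v\|_{H^1}^2$ fails for $N\ge4$ even with an $L^\infty$ bound, but the absorption step only needs $\|N_L\|_{L^2}=o(\|v\|_{H^1}+|\mu|)$, which follows from $\|v\|_{H^1}+|\mu|\to0$ and Lemma \ref{frechet-g}.
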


Next, we derive an asymptotic expansion of the wave speed $c_L$ with respect to small $L > 0$, which shows that the $O(L)$ convergence rate is sharp, even in the case where $f$ is independent of $x$. To obtain 
an explicit expression for the first-order coefficient in the expansion, besides the correctors $\chi$ and $\zeta$ defined in \eqref{Acorrector} and \eqref{Bcorrector}, we introduce several additional correctors. Specifically,  denote by 
\begin{equation}\label{corrector-chi1}
\chi_1=\chi\cdot e,
\end{equation}	 
and consider the following two cell problems:
\begin{equation}\label{corrector-chi2}
\nabla \cdot (A \nabla \chi_2) = A^{\rm hom}e\cdot e - A(e + \nabla \chi_1) \cdot e -\nabla \cdot \left(Ae\chi_1\right)\,\, \hbox{ in } \,\, \T^N, \quad\hbox{and}\quad \int_{\T^N} \chi_2dy=0, 
\end{equation}
and 
\begin{equation}\label{corrector-eta1}
\nabla \cdot (A \nabla \eta_1) = \langle B \rangle_{A}\cdot e - B \cdot (e+\nabla \chi_1) \,\,	\hbox{ in } \,\, \T^N,
		\quad\hbox{and}\quad \int_{\T^N} \eta_1dy=0. 
\end{equation}
The existence and uniqueness of solutions to the above cell problems will be given in Lemma \ref{exisence-chi2-eta1}.  The correctors $\chi_2$ and $\eta_1$ are introduced to describe the higher-order effects of the diffusion and drift oscillations, which are essential for proving Theorem \ref{theo-convergence} and deriving the asymptotic expansion of $c_L$ in the following theorem.

\begin{theorem}\label{theo-sharp-speed}
Assume that {\rm (H1)} holds, $c_0\neq 0$, and $f=f(u)$ is independent of $x$. 
Then for all small $L$, the wave speed $c_L$ admits the expansion
\begin{equation*}
c_L = c_0 + Lc_1 +  O(L^2),
\end{equation*}
where the first-order coefficient $c_1$ is explicitly given by 
\begin{equation}\label{solvability condition-1}
\begin{aligned}
c_1=-d_1-\left(d_2\int_{\R} \phi_0''(\xi)w(\xi)d\xi+d_3  \int_{\R} \phi_0'''(\xi)w(\xi)d\xi\right)\left(\int_{\R}\phi_0'(\xi)w(\xi)d\xi\right)^{-1}. 
\end{aligned}
\end{equation}
Here, the constants $d_1, d_2, d_3$, and the function $w\in L^2(\R)$ are defined, respectively, by 
\begin{equation}\label{formula-d123}
	\left\{\begin{aligned}
		&d_1= \int_{\T^N} B \cdot \nabla \eta_1dy, \\
		&d_2 = \int_{\T^N} \left(A \nabla \eta_1 \cdot e + B \cdot e \chi_1 + B \cdot \nabla \chi_2\right)dy, \\
		&d_3 = \int_{\T^N} \left((A e \cdot e) \chi_1 + A \nabla \chi_2 \cdot e\right)dy,
	\end{aligned}\right.
\end{equation}
and
\begin{equation}\label{adjoint-kernel}
	w(\xi)=\phi_0'(\xi){\rm exp}\left(\frac{\langle B \rangle_{A}\cdot e + c_0}{A^{{\rm hom}}e \cdot e}\xi\right). 
\end{equation}

If, in addition, the diffusion matrix $A$ is symmetric, then the above expression for $c_1$ reduces to
\begin{equation}\label{solvability conditio2}
c_1=-\tilde d_1+\frac{\langle B\rangle_A\cdot e+c_0}{2A^{\rm hom}e\cdot e}\,\tilde d_2.
\end{equation}
where
\begin{equation}\label{simple-tildec12}
\tilde{d}_1=\int_{\T^N}  B \cdot (\nabla \chi_1 + e) \zeta dy,\quad\hbox{and}\quad \tilde{d}_2= \int_{\T^N}  B \cdot \nabla \left(\chi_2 - \frac{\chi_1^2}{2}\right)dy.
\end{equation}

\end{theorem}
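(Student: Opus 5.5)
We plan a two-scale asymptotic expansion of the profile around the homogenized wave, combined with a Fredholm solvability condition for the linearized homogenized operator; the remainder is controlled by the energy estimates behind Theorem~\ref{theo-convergence}.

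\emph{Ansatz.} Write the profile as
\[
\phi_L(\xi,y)=\phi_0(\xi)+L\phi_1(\xi,y)+L^2\phi_2(\xi,y)+L^2R_L(\xi,y),\qquad c_L=c_0+Lc_1+L^2\rho_L,
\]
and insert it into \eqref{front equation} with $\tilde\nabla_{L,e}=e\partial_\xi+L^{-1}\nabla_y$. Since $f=f(u)$, the reaction term creates no oscillating contribution at orders $L^{-1}$ and $L^0$. We then collect powers of $L$:
\begin{itemize}
\item \emph{Order $L^{-1}$.} This gives $\phi_1=\chi_1(y)\phi_0'(\xi)+\bar\phi_1(\xi)$. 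We will normalise $\bar\phi_1$ later.
\item \emph{Order $L^0$.} The $y$-dependent part is solved by
\[
\phi_2=\chi_2\phi_0''+\eta_1\phi_0'+\chi_1\bar\phi_1'+\bar\phi_2.
\]
This uses the cell problems \eqref{corrector-chi2} and \eqref{corrector-eta1} and Lemma~\ref{exisence-chi2-eta1}. The $y$-average reproduces \eqref{homo-wave}, with $A^{\rm hom}$ and $\langle B\rangle_A$ arising as the solvability conditions. To check that the averaged drift is $\langle B\rangle_A$ as defined through $\zeta$, use the identity $\int B\cdot(e+\nabla\chi_1)=\langle B\rangle_A\cdot e$. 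This follows by testing \eqref{Acorrector} against $\zeta$ and \eqref{Bcorrector} against $\chi_1$.
\item \emph{Order $L^1$.} Averaging over $\T^N$ gives
\[
\mathcal L\bar\phi_1+c_1\phi_0'=-\big(d_1\phi_0'+d_2\phi_0''+d_3\phi_0'''\big)+(\text{terms in }\bar\phi_1),
\]
where
\[
\mathcal L=(A^{\rm hom}e\cdot e)\partial_\xi^2+(\langle B\rangle_A\cdot e+c_0)\partial_\xi+\bar f'(\phi_0).
\]
The coefficients $d_1,d_2,d_3$ in \eqref{formula-d123} come from integrating the products $B\cdot\nabla_y\phi_2$, $A\nabla_y\phi_2\cdot e$, $B\cdot e\,\phi_1$, $(Ae\cdot e)\phi_1$, and so on over the cell. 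Divergence terms integrate to zero.
\end{itemize}

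\emph{Solvability.} The adjoint $\mathcal L^*$ has one-dimensional kernel spanned by $w$ in \eqref{adjoint-kernel}. Indeed, $\mathcal L^*(e^{\kappa\xi}\phi_0')=e^{\kappa\xi}\mathcal L\phi_0'=0$ for $\kappa=(\langle B\rangle_A\cdot e+c_0)/(A^{\rm hom}e\cdot e)$. Moreover $w\in L^2$ because of the exponential decay of $\phi_0'$, which holds since $\bar f'(0),\bar f'(1)<0$.

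We will choose $\bar\phi_1$ so that the $\bar\phi_1$-dependent terms equal $\mathcal L\bar\phi_1$ plus terms orthogonal to $w$; the simplest choice is $\bar\phi_1=0$. Multiplying by $w$ and integrating then kills $\mathcal L\bar\phi_1$ and yields \eqref{solvability condition-1}. Here $\int\phi_0'w\neq0$ because it is the integral of $(\phi_0')^2e^{\kappa\xi}>0$. The coefficient $d_1$ enters with weight $\int\phi_0'w/\int\phi_0'w=1$.

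\emph{Rigorous remainder.} We build the approximate solution $\Phi_L=\phi_0+L\phi_1+L^2\phi_2$ with $c_0+Lc_1$ and compute the residual, which is $O(L)$ in $L^2(\R\times\T^N)$. We would include one more corrector layer if needed to make the residual $O(L^2)$ in a suitable dual norm. We then apply the uniform invertibility of the linearized degenerate operator modulo the kernel direction, the estimate underlying Theorem~\ref{existence small period} and Theorem~\ref{theo-convergence}. This bounds $\|\phi_L-\Phi_L\|$ and $|c_L-c_0-Lc_1|$ by $CL^2$. By Corollary~\ref{uniqunee-samllL}, $\phi_L$ is the implicit-function solution, so the comparison is legitimate after fixing the shift normalization.

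\emph{Main obstacle.} The main obstacle is this remainder step. The linearized operator $\tilde\nabla_{L,e}\cdot A\tilde\nabla_{L,e}+\dots$ has the singular $L^{-1}\nabla_y$ part. The residual contains $L^{-1}$-weighted $y$-derivatives of the correctors, so one must either add the next corrector or use the $H^1$-type energy estimate, in which $L^{-1}\nabla_y$ derivatives are controlled, to absorb them. We will first attempt the estimate in the norm already used for Theorem~\ref{theo-convergence}, measuring the residual in $H^{-1}$ with respect to $\tilde\nabla_{L,e}$.

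\emph{Symmetric case.} When $A$ is symmetric, we reduce \eqref{solvability condition-1} to \eqref{solvability conditio2}. First, integrate by parts in $\xi$: using the ODE for $\phi_0$, one finds
\[
\int\phi_0''w=-\tfrac{\kappa}{2}\int\phi_0'w,
\]
and $\int\phi_0'''w$ is related to $\int\phi_0'w$ in the same way. Second, use cell-problem identities to rewrite $d_1,d_2,d_3$. When $A=A^T$, one has $\zeta$-related identities such as $\int B\cdot\nabla\eta_1=\int B\cdot(\nabla\chi_1+e)\zeta$, and $d_3$ combines with $\int B\cdot\nabla(\chi_1^2/2)$. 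This last step is purely algebraic.
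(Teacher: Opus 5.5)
Your overall architecture (a two-scale expansion, the Fredholm condition against $w$ for $c_1$, and comparison with the implicit-function solution through the quantitative inverse of $\partial_{(v,c)}G$) is the one the paper uses. However, the approximate profile fails as you have set it up, because of the choice $\bar\phi_1=0$.

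In the $y$-average of the order-$L$ equation, the only $\bar\phi_1$-dependence is $\mathcal L\bar\phi_1$. The $\bar\phi_1$-contributions from $(Ae\cdot e)\partial_\xi^2\phi_1$, from $A\nabla_y\partial_\xi\phi_2\cdot e$ and $B\cdot\nabla_y\phi_2$ (through $\chi_1\bar\phi_1'$), from $(B\cdot e+c_0)\partial_\xi\phi_1$ and from $f'(\phi_0)\phi_1$ assemble exactly into $\mathcal L\bar\phi_1$. This uses $\int_{\T^N}A(e+\nabla\chi_1)\cdot e=A^{\rm hom}e\cdot e$ and Lemma~\ref{lem-mean-B}. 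So the averaged equation is
\[
\mathcal L\bar\phi_1=-\bigl(d_3\phi_0'''+d_2\phi_0''+(d_1+c_1)\phi_0'\bigr).
\]
Multiplying by $w$ gives \eqref{solvability condition-1} for any $\bar\phi_1$. For the remainder, though, this identity must hold pointwise in $\xi$: a $y$-independent residual cannot be removed by any periodic corrector, since $\nabla_y\cdot(A\nabla_y\,\cdot)$ has zero $y$-mean. With $\bar\phi_1=0$ the residual keeps $L\bigl((d_1+c_1)\phi_0'+d_2\phi_0''+d_3\phi_0'''\bigr)$. This term is orthogonal to $w$ but is nonzero unless $d_2=d_3=0$, so the comparison only gives $|c_L-c_0-Lc_1|\le CL$, nothing beyond Theorem~\ref{theo-convergence}.

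The missing step is to take $\bar\phi_1=\varphi=\varphi_0+\mu_0\phi_0'$, an $H^2$ (indeed exponentially decaying $C^4$) solution of this equation. It exists because $c_1$ is defined by the orthogonality condition and $R(\mathcal L)=w^\perp$. Only the kernel coefficient $\mu_0$ is free, and it must be fixed by $\int_{\R^+}\phi_0\varphi\,d\xi=0$ so the $G_2$-normalization is matched to $O(L^2)$; $\bar\phi_1$ is not a free normalization.

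The third layer is also necessary, not optional. $L^3\phi_3$ enters at order $L$ through $L\nabla_y\cdot(A\nabla_y\phi_3)$, and you need $\phi_3=\chi_3\phi_0'''+\eta_2\phi_0''+\eta_3\phi_0'+\chi_2\varphi''+\eta_1\varphi'$, with the cell problems \eqref{corrector-chi3}--\eqref{corrector-eta3}. Their right-hand sides have zero mean exactly because of the values of $d_1,d_2,d_3$. Only then is the residual $O(L^2)$ in $L^2(\R\times\T^N)$, which is the data class for Lemma~\ref{solvability-M} and the mean-value argument in the proof of Theorem~\ref{theo-convergence}.

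Your symmetric-case reduction is also mis-sketched. $\int_\R\phi_0'''w$ is not related to $\int_\R\phi_0'w$ the way $\int_\R\phi_0''w$ is. With your $\kappa$, the differentiated wave equation gives
\[
\int_\R\phi_0'''w=\frac{\kappa^2}{2}\int_\R\phi_0'w-\frac{1}{A^{\rm hom}e\cdot e}\int_\R f'(\phi_0)(\phi_0')^2e^{\kappa\xi}\,d\xi ,
\]
which depends on $f$. What actually produces \eqref{solvability conditio2} is that $d_3=0$ when $A=A^T$. Integrating $\int_{\T^N}A\nabla\chi_2\cdot e$ by parts twice, using the equations for $\chi_1$ and $\chi_2$, the zero mean of $\chi_1$ and symmetry, gives $-\int_{\T^N}(Ae\cdot e)\chi_1$.

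The $-\chi_1^2/2$ in $\tilde d_2$ comes from $d_2$ itself, not from $d_3$. Symmetry and \eqref{corrector-eta1} give $\int_{\T^N}A\nabla\eta_1\cdot e=-\int_{\T^N}B\cdot(\nabla\chi_1+e)\chi_1$. This cancels $\int_{\T^N}(B\cdot e)\chi_1$ and leaves $-\int_{\T^N}B\cdot\nabla(\chi_1^2/2)$. Your identity $d_1=\tilde d_1$ is correct.
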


Several remarks are in order. First, in the special case where $B=0$, one readily has $\eta_1=0$, and hence $d_1=d_2=0$. In this case, the convergence rate in Theorem \ref{theo-convergence} reduces to that obtained in \cite{hps}, while the wave-speed expansion in Theorem \ref{theo-sharp-speed} coincides with that derived in \cite{ds}. The proofs in \cite{hps,ds} rely on a min-max variational characterization of $c_L$, which in turn uses the global asymptotic stability of moving pulsating waves. As mentioned above, such global stability is not known in general.  Our approach, by contrast, does not use any variational characterization of the wave speed. It is based on the construction in the proof of Theorem \ref{existence small period}, combined with higher-order approximations of $\phi_L$ in terms of $L$ around the homogenized profile $\phi_0$. Moreover, the method applies to \eqref{equation} with a general drift term $B$.

Secondly, we identify several situations in which the first-order correction vanishes. On the one hand, 
if the diffusion matrix $A$ is symmetric and $\nabla\cdot B=0$, then $\zeta=0$, and consequently, $\tilde{d}_1=\tilde{d}_2=0$. It follows from \eqref{solvability conditio2} that $c_1=0$.
On the other hand, if $A$ is a constant matrix, then $\chi_1 = \chi_2 = 0$, and hence, 
$d_2=d_3=0$, 
$$c_1=-d_1=-\int_{\T^N} B \cdot \nabla \eta_1dy,$$ 
which in particular implies that $c_1$ is independent of the nonlinearity $f$.
If $A$ is further assumed to be symmetric, then
$c_1 = -\tilde{d}_1   = -\int_{\T^N}B \cdot e \zeta dy$. 
In this case, since $\zeta$ is the solution of \eqref{Bcorrector}, 
the vector field $B$ admits the decomposition 
\begin{equation}\label{decomposition}
B = A\nabla \zeta + T + \int_{\T^N}B(y)dy,
\end{equation}
where $T$ is a divergence-free periodic vector field with zero mean. Therefore, we have
\begin{equation}\label{simple-c1}
c_1 = -\int_{\T^N}\left((A\nabla \zeta \cdot e) \zeta + (T\cdot e)\zeta   \right)dy= -\int_{\T^N} (T \cdot e)\zeta dy. 
\end{equation}
In particular, if $N=1$, then $T=0$, which implies $c_1=0$. These observations yield the following corollary. 

\begin{corollary}\label{zero-c1}Under the assumptions of Theorem
	{\rm \ref{theo-sharp-speed}}, $c_1=0$ if either
\begin{itemize}
\item[{\rm (i)}]  $A$ is symmetric and $\nabla\cdot B=0$;
\item[{\rm (ii)}] $N=1$ and $A$ is a positive constant.
\end{itemize}
In either case, we have $c_L=c_0+O(L^2)$ as $L\to0^+$.
\end{corollary}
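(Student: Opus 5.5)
Both cases follow from Theorem \ref{theo-sharp-speed} by showing that the relevant correctors vanish or decouple, so that the coefficients in \eqref{solvability conditio2} or the constant $d_1$ are zero. The expansion $c_L=c_0+Lc_1+O(L^2)$ is then simply specialized to $c_1=0$.

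\emph{Case (i).} Let $A$ be symmetric with $\nabla\cdot B=0$. Then $\zeta\equiv 0$ solves \eqref{Bcorrector}, because $\nabla\cdot(A^T\nabla 0-B)=-\nabla\cdot B=0$ and $\zeta$ has zero mean. By uniqueness of the cell problem, $\zeta=0$, so $\tilde d_1=\int_{\T^N}B\cdot(\nabla\chi_1+e)\zeta\,dy=0$. For $\tilde d_2$, let $g:=\chi_2-\chi_1^2/2$, which is a smooth periodic function. Integrating by parts on the torus gives $\tilde d_2=\int_{\T^N}B\cdot\nabla g\,dy=-\int_{\T^N}(\nabla\cdot B)g\,dy=0$. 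Formula \eqref{solvability conditio2} then yields $c_1=0$. This case uses only the symmetric reduction already stated in Theorem \ref{theo-sharp-speed}. The only points to check are the uniqueness of $\zeta$ (standard Fredholm theory for \eqref{Bcorrector}) and the regularity of $\chi_1,\chi_2$, which holds because $A\in C^2$.

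\emph{Case (ii).} Let $N=1$, $A=a>0$ constant, and $e=\pm1$. From \eqref{Acorrector} we have $a\chi''=0$ on $\T^1$, so $\chi=0$. Hence $\chi_1=0$, and \eqref{corrector-chi2} has right-hand side $A^{\rm hom}-a=0$, which gives $\chi_2=0$. Therefore $d_2=d_3=0$ and $c_1=-d_1=-\int_0^1 B\eta_1'\,dy$. Equation \eqref{corrector-eta1} reads $a\eta_1''=\langle B\rangle_A e-Be$, where $\langle B\rangle_A=\int_0^1B$ by the one-dimensional formula. Write $B=\bar B+\tilde B$ with $\bar B=\int_0^1B$ and $\tilde B$ of zero mean. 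Then $\eta_1''=-e\tilde B/a$, and
\[
d_1=\int_0^1(\bar B+\tilde B)\eta_1'\,dy=\int_0^1\tilde B\eta_1'\,dy=-\frac{a}{e}\int_0^1\eta_1''\eta_1'\,dy=-\frac{a}{2e}\int_0^1\big((\eta_1')^2\big)'dy=0.
\]
Here $\int_0^1\bar B\eta_1'\,dy=0$ by periodicity of $\eta_1$, and $e^2=1$. So $c_1=0$. Equivalently, one can use the symmetric route via \eqref{decomposition}: in one dimension $T$ is a divergence-free, zero-mean periodic field, hence $T=0$, and \eqref{simple-c1} gives $c_1=0$.

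In both cases Theorem \ref{theo-sharp-speed} gives $c_L=c_0+O(L^2)$. No step is a real obstacle. The only care needed is to justify the integrations by parts (by the periodicity and $C^1$ regularity of the correctors) and the uniqueness of the cell-problem solutions used to conclude that $\zeta$, $\chi$ and $\chi_2$ vanish.
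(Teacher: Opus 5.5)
Your proof is correct and follows the paper's argument. In case (i), $\zeta=0$ makes $\tilde d_1$ vanish, and $\nabla\cdot B=0$ makes $\tilde d_2$ vanish after an integration by parts, a step the paper leaves implicit; in case (ii), constant $A$ gives $\chi_1=\chi_2=0$ and hence $c_1=-d_1$, where the only difference is that you prove $d_1=0$ by a direct one-dimensional computation from the $\eta_1$ equation instead of the paper's route through $c_1=-\int_{\T^N}(T\cdot e)\zeta\,dy$ with $T=0$ for $N=1$, which you also mention.
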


\begin{remark}{\rm 
The above corollary implies that, for equations with rapidly oscillating shear flows, 
the wave speed converges to the corresponding homogenized wave speed with an error of order $O(L^2)$. More precisely, consider the case where $B(x)=(b(\tilde{x}),0,\cdots,0)$ with $x=(x_1,\tilde{x})$, and assume that  $e=(1,0,\cdots,0)$. For each $L>0$, the corresponding equation 
$$
u_t = \Delta u + b(\tilde{x}/L)\partial_{x_1} u + f(u), \qquad t\in\mathbb{R},\; x\in\mathbb{R}^N,
$$
can be viewed as a periodic analogue of the problem in infinite cylinders with bounded cross sections and Neumann boundary conditions studied by Berestycki and Nirenberg \cite{bn92}. In particular, 
there exists a traveling wave of the form $U_L(t,x)=\phi_L(x_1-c_Lt,\tilde{x}/L)$, where the profile $\phi_L$ is $(1,\cdots,1)$-periodic in its second variable. Moreover, since $\nabla\cdot B=0$, Theorem \ref{theo-sharp-speed} and Corollary \ref{zero-c1} imply $c_L=c_0+O(L^2)$ provided that $c_0\neq 0$, where $c_0$ is the unique speed for the homogenized equation
$u_t = \Delta u + b_0\partial_{x_1}u+ f(u)$ with $b_0=\int_{\T^{N-1}} b(\tilde{x})d\tilde{x}$.
}\end{remark}

Finally, we point out that the first-order coefficient $c_1$ does not vanish in general, which implies that the $O(L)$  convergence rate is sharp. For example, consider
\begin{equation*}
N=2,\quad  e=(1,0),\quad  A=I_2,\quad  \quad\hbox{and}\quad  B(y)=(\cos(2\pi y_2), -\sin(2\pi y_2)),
\end{equation*} 
with $y=(y_1,y_2)$. 
It is straightforward to verify that $B(\cdot)$ is $\Z^2$-periodic, has zero mean over $\T^2$, and satisfies $\nabla \cdot B=-2\pi \cos(2\pi y_2)$. Moreover, in this situation,  the unique solution to \eqref{Bcorrector} is given by $\zeta(y)=\cos(2\pi y_2)/(2\pi)$, 
and the divergence-free periodic vector $T$ in the decomposition \eqref{decomposition} takes the form $T(y)= (\cos(2\pi y_2),0)$. Consequently, by \eqref{simple-c1}, we obtain 
$$c_1= -\int_{\T^N} (T \cdot e)\zeta dy=-\frac{1}{2\pi}  \int_0^1\int_0^1 \cos^2(2\pi y_2)dy_2dy_1= -\frac{1}{4\pi}\neq 0.  $$ 
Furthermore, if $B$ is replaced by $(-\cos(2\pi y_2),-\sin(2\pi y_2))$, then the corresponding $T$ becomes $(-\cos(2\pi y_2),0)$ while $\zeta$ remains unchanged, yielding that the corresponding value of $c_1$ changes sign. 
Therefore, even under the restrictions that $A$ and $f$ are homogeneous, and $B$ has zero mean, waves near the homogenization limit can be slower or faster compared to the homogenized wave.

The above example also illustrates that rapid oscillations in the drift term $B$ can generate a nontrivial first-order correction even when $A$ is a constant matrix. In contrast, Corollary \ref{zero-c1} shows that if $B$ is a constant vector and $A$ is symmetric, then the first-order correction vanishes. Hence, the heterogeneity of the drift term $B$ plays an essential role in the first-order correction of the wave speed.


\vskip 10pt 

\noindent{\bf{Outline of the paper.}} 
In Section 2, we study an auxiliary linear problem involving the linear part of equation \eqref{front equation} and its homogenized counterpart. The properties for these linear problems will serve as the foundation in Section 3 to prove the existence of pulsating waves for small $L>0$ (Theorem \ref{existence small period}), and in Section 4 to derive convergence rate estimates towards the homogenized wave as $L\to 0^+$ (Theorems \ref{theo-convergence} and \ref{theo-sharp-speed}). Finally, Section 5 is devoted to the proof of Theorem \ref{theo-unique} on the uniqueness of pulsating waves for general $L>0$ by a sliding argument.


\SE{Homogenization of an auxiliary linear problem}

In this section, we study the homogenization of an auxiliary degenerate linear elliptic problem and prove several properties needed for the subsequent analysis. Although related results are available in special cases with diagonal diffusion matrices and no advection \cite{he,dhz1}, the presence of advection and the possible lack of symmetry of the diffusion matrix require new energy-estimate techniques and a different argument for strong convergence.

To formulate the linear problems, we first introduce some notations. Let $L^2(\R \times \T^N)$ and $H^1(\R \times \T^N)$ be the Banach spaces defined by
\begin{equation*}
	\begin{aligned}
		L^2(\R \times \T^N) = 
		\Big\{&v \in L_{\mathrm{loc}}^2(\R \times \R^N) \  | \ 
		v \in L^2(\R \times (0,1)^N) \vspace{5pt}\\
		&\hbox{ and } v(\xi, y+z) = v(\xi,y) \hbox{ almost everywhere in } \R^{N+1} \hbox{ for any } z \in \Z^N\Big\},
	\end{aligned}
\end{equation*}
\begin{equation*}
	\begin{aligned}
		H^1(\R \times \T^N) = 
		\Big\{&v \in H_{\mathrm{loc}}^1(\R \times \R^N) \  | \ 
		v \in H^1(\R \times (0,1)^N) \vspace{5pt}\\
		&\hbox{ and } v(\xi, y+z) = v(\xi,y) \hbox{ almost everywhere in } \R^{N+1} \hbox{ for any } z \in \Z^N\Big\},
	\end{aligned}
\end{equation*}
endowed,respectively, with the norms $\|v\|_{L^2(\R \times \T^N)} = \|v\|_{L^2(\R \times (0,1)^N)}$
and
\begin{equation*}
	\|v\|_{H^1(\R \times \T^N)} =
	\|v\|_{H^1(\R \times (0,1)^N)} =
	\left(\|v\|^2_{L^2(\R \times \T^N)}+\|\partial_{\xi}v\|^2_{L^2(\R \times \T^N)}+\sum_{i=1}^{N}\|\partial_{y_i} v\|_{L^2(\R \times \T^N)}^2\right)^{1/2}.
\end{equation*}

Fix a large constant $\beta>0$ such that
\begin{equation}\label{beta-gamma-bound}
	\beta \geq \frac{\|B\|^2_{L^{\infty}(\T^N)}}{\eta_0},
\end{equation}
where $\eta_0>0$ is the constant from \eqref{uniform-elliptic}.
Recall that $A^{{\rm hom}}$ and $\langle B \rangle_{A}$ are, respectively, the constant matrix and vector defined in \eqref{a harmonic mean} and \eqref{homogenized limit w.r.t. A}.
Now, for any $c >0$ and $e\in \mathbb{S}^{N-1}$, we define 
\begin{equation*}
M_{c,L,e}(v) = \tilde{\nabla}_{L,e} \cdot (A \tilde{\nabla}_{L,e} v) + B \cdot \tilde{\nabla}_{L,e} v + c \partial_\xi v - \beta v,
\end{equation*}
where 
$$ v \in \Dcal_{L,e}: = 
\left\{v \in H^{1}(\R \times \T^N) \ | \ 
\tilde{\nabla}_{L,e} \cdot (A \tilde{\nabla}_{L,e} v) \in L^{2}(\R \times \T^N)\right\} \quad\hbox{and}\quad 
L \in \R^*, $$
and
\begin{equation}\label{defi-M0}
M_{c,0,e}(v) = (A^{{\rm hom}} e \cdot e) v'' + (\langle B \rangle_{A}\cdot e + c)v' - \beta v,
\end{equation}
where $v \in D(M_{c,0,e}): = H^{2}(\R)$. 
We point out that the operator $M_{c,L,e}$ is also defined for negative values of $L$, and that  $v\in\mathcal{D}_{L,e}$ means $v\in H^1(\R\times\T^N)$, and for all $\varphi\in H^1(\R\times\T^N)$, 
$$\left|\int_{\R\times\T^N}A \tilde{\nabla}_{L,e} v \cdot \tilde{\nabla}_{L,e} \varphi\right| \leq C\|\varphi\|_{L^2(\R\times\T^N)}$$ 
for some constant $C>0$, and
$$\int_{\R\times\T^N}A\tilde{\nabla}_{L,e} v\cdot\tilde{\nabla}_{L,e} \varphi=-\int_{\R\times\T^N}\tilde{\nabla}_{L,e}\cdot (A\tilde{\nabla}_{L,e} v)\varphi. $$

We first show the invertibility of $M_{c,L,e}$ and $M_{c,0,e}$, together with the corresponding energy estimates.

\begin{lemma}\label{solvability-M}
For every $c >0$ and $e\in \mathbb{S}^{N-1}$, the operators 
$M_{c,0,e}: H^{2}(\R) \to L^2(\R)$
and 
$M_{c,L,e}: \Dcal_{L,e} \to L^2(\R \times \T^N)$ for $L\neq 0$ 
are invertible. Furthermore, for every $L_0>0$ and $\gamma>0$, there exists a constant $C = C(L_0, \beta,\gamma, A, B)$ such that for all $|L| \leq  L_0$, $c\geq \gamma$, $g \in L^2(\R \times \T^N)$, $e\in \mathbb{S}^{N-1}$ and $\varphi \in L^2(\R)$,
\begin{equation}\label{energy-H1}
	\begin{cases}
		\|M_{c,L,e}^{-1}(g)\|_{H^1(\R \times \T^N)}
		\leq C \|g\|_{L^2(\R \times \T^N)},\vspace{5pt}\\
		\|\tilde{\nabla}_{L,e}(M_{c,L,e}^{-1}(g)) \|_{L^2(\R \times \T^N)} \leq C \|g\|_{L^2(\R \times \T^N)},
	\end{cases} \quad 
	\hbox{ if } \;L \neq 0,
\end{equation}	
	and 
	\begin{equation*}
		\|M_{c,0,e}^{-1}(\varphi)\|_{H^{1}(\R)} \leq C\|\varphi\|_{L^{2}(\R)}.
	\end{equation*}
\end{lemma}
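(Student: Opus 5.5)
The plan is to obtain invertibility from the Lax--Milgram theorem and then to upgrade the resulting $\tilde\nabla_{L,e}$-estimate to a full $H^1$ bound that is uniform in $L$. Only the last step is genuinely delicate.

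\textbf{Step 1: the homogenized operator.} The operator $M_{c,0,e}$ has constant coefficients, with $A^{\rm hom}e\cdot e\ge\eta_0>0$ (inherited ellipticity) and $\beta>0$. Its symbol is $-(A^{\rm hom}e\cdot e)k^2+i(\langle B\rangle_A\cdot e+c)k-\beta$, whose modulus is bounded below by $\max\{\eta_0k^2+\beta,\,\beta\}$. Hence $M_{c,0,e}:H^2(\R)\to L^2(\R)$ is invertible, with $\|M_{c,0,e}^{-1}\varphi\|_{H^1}\le \max\{\beta^{-1},(\eta_0\beta)^{-1/2}\}\,\|\varphi\|_{L^2}$, uniformly in $c$ and $e$.

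\textbf{Step 2: coercivity for $L\neq0$.} Consider the bilinear form on $H^1(\R\times\T^N)$
\[
a(v,\varphi)=\int A\tilde\nabla_{L,e}v\cdot\tilde\nabla_{L,e}\varphi-\int (B\cdot\tilde\nabla_{L,e}v)\varphi-c\int\partial_\xi v\,\varphi+\beta\int v\varphi .
\]
Three facts give coercivity. First, $\int \partial_\xi v\, v=0$. Second, ellipticity gives $\int A\tilde\nabla v\cdot\tilde\nabla v\ge\eta_0\|\tilde\nabla v\|^2$. Third, by Young's inequality and \eqref{beta-gamma-bound},
\[
\Big|\int(B\cdot\tilde\nabla v)v\Big|\le\frac{\eta_0}{2}\|\tilde\nabla v\|^2+\frac{\beta}{2}\|v\|^2 .
\]
Together these yield $a(v,v)\ge\frac{\eta_0}{2}\|\tilde\nabla_{L,e}v\|^2+\frac\beta2\|v\|^2$.

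For fixed $L\ne0$, the norm $(\|\tilde\nabla_{L,e}v\|^2+\|v\|^2)^{1/2}$ is equivalent to the $H^1$ norm. Indeed, $\int\partial_\xi v\,\nabla_y v=0$ by periodicity in $y$ and integration by parts, so $\|\tilde\nabla_{L,e}v\|^2=\|\partial_\xi v\|^2+L^{-2}\|\nabla_yv\|^2+2L^{-1}\int\partial_\xi v\,e\cdot\nabla_y v$. The cross term can be handled in Fourier variables in $(\xi,y)$: the symbol of $\tilde\nabla_{L,e}$ is $i(ke+2\pi m/L)$, whose modulus is comparable to $|k|+|m|$ only up to $L$-dependent constants. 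Hence Lax--Milgram gives a unique weak solution $v\in H^1$ for every $g$. The definition of $\Dcal_{L,e}$ then shows that $v\in\Dcal_{L,e}$ and $M_{c,L,e}v=g$, and uniqueness follows from the same coercivity. Testing the equation with $v$ gives the second line of \eqref{energy-H1} and $\|v\|\le C\|g\|$, uniformly in $L,c,e$.

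\textbf{Step 3: uniform bound on $\nabla_y v$ given $\partial_\xi v$.} Since $\nabla_yv=L(\tilde\nabla_{L,e}v-e\,\partial_\xi v)$, we have $\|\nabla_y v\|\le L_0\big(C\|g\|+\|\partial_\xi v\|\big)$. So it suffices to bound $\|\partial_\xi v\|$ uniformly, and this is where $c\ge\gamma$ enters.

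\textbf{Step 4: uniform bound on $\partial_\xi v$ (the main obstacle).} Test the equation with $\partial_\xi v$, justified via difference quotients in $\xi$, since all coefficients are independent of $\xi$. This gives
\[
c\|\partial_\xi v\|^2=\int g\,\partial_\xi v+\int (B\cdot\tilde\nabla v)\partial_\xi v+\int A\tilde\nabla v\cdot\partial_\xi\tilde\nabla v ,
\]
where the $\beta$-term vanished because $\int v\,\partial_\xi v=0$. The first two terms are bounded by $C\|g\|\,\|\partial_\xi v\|$.

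Write $A=A_s+A_a$. The symmetric part contributes $\frac12\int\partial_\xi(A_s\tilde\nabla v\cdot\tilde\nabla v)=0$. The antisymmetric part is the difficulty, and for symmetric $A$ there is nothing to prove here. I would first rewrite it as a lower-order term: because $A$ depends only on $y$, the second-order contributions of $A_a$ cancel, so that
\[
\tilde\nabla_{L,e}\cdot(A_a\tilde\nabla_{L,e}v)=L^{-1}\,b(y)\cdot\tilde\nabla_{L,e}v,\qquad b_j=\sum_i\partial_{y_i}(A_a)_{ij}.
\]
This is an extra drift of size $1/L$, with $\int_{\T^N}b=0$.

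Since $b$ is a $y$-divergence, I would write $b=\nabla_y\cdot\Psi$ with $\Psi=A_a^T$, which is $C^2$. Then
\[
L^{-1}\int (b\cdot\tilde\nabla v)\,\partial_\xi v=\int (\tilde\nabla_{L,e}-e\partial_\xi)\cdot\Psi\ \cdots,
\]
and I would integrate by parts using $L^{-1}\nabla_y=\tilde\nabla_{L,e}-e\partial_\xi$. Every resulting term then involves either $\tilde\nabla v$ or $\partial_\xi v$ paired against $\tilde\nabla\partial_\xi v$ contracted with the antisymmetric $\Psi$, and pure antisymmetric contractions vanish. The aim is an estimate of the form
\[
c\|\partial_\xi v\|^2\le C\|g\|\,\|\partial_\xi v\|+C\|\tilde\nabla v\|\,\|\partial_\xi v\|,
\]
which would give $\|\partial_\xi v\|\le C\gamma^{-1}\|g\|$. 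If this bookkeeping fails, the fallback is to decompose $v$ into its $y$-average $\bar v(\xi)$ and an oscillating remainder. On the remainder, $\nabla_y$ controls the function by Poincaré's inequality, which gains a factor $L$; on $\bar v$, the averaged equation is a one-dimensional uniformly elliptic equation with drift $c$.
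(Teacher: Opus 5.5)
Your Steps 1 and 3, and the bounds obtained by testing with $v$, are fine. However, the proposal has two genuine gaps.

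\textbf{Step 2 rests on a false claim.} For fixed $L\neq0$, the quantity $(\|\tilde\nabla_{L,e}v\|^2+\|v\|^2)^{1/2}$ is \emph{not} equivalent to the $H^1$ norm. The symbol $i(ke+2\pi m/L)$ of $\tilde\nabla_{L,e}$ vanishes at nonzero frequencies, e.g.\ $e=e_1$, $m=e_1$, $k=-2\pi/L$. For every $e$ it is also arbitrarily small compared with $|m|$ along $k=-2\pi m\cdot e/L$. This is exactly the degeneracy of the profile operator. Relatedly, $\int\partial_\xi v\,\nabla_y v$ does not vanish in general: for $v=\rho(\xi)\cos(2\pi(y_1-\xi))$ one gets $\int\partial_\xi v\,\partial_{y_1}v=-2\pi^2\|\rho\|_{L^2(\R)}^2$. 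Consequently your form is not coercive on $H^1(\R\times\T^N)$. On the graph space of $\tilde\nabla_{L,e}$ it is coercive, but there the term $c\int\partial_\xi v\,\varphi$ is not continuous. So Lax--Milgram gives you neither existence nor $v\in H^1$. Control of $\partial_\xi v$, and through $\nabla_yv=L(\tilde\nabla_{L,e}v-e\partial_\xi v)$ of $\nabla_y v$, can only come from the drift $c$. Existence therefore has to be obtained together with the estimate of Step 4.

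\textbf{Step 4 does not close.} Your identity $\tilde\nabla_{L,e}\cdot(A_a\tilde\nabla_{L,e}v)=L^{-1}b\cdot\tilde\nabla_{L,e}v$ is correct. But integrating by parts with $L^{-1}\partial_{y_i}=\tilde\partial_i-e_i\partial_\xi$ simply returns $I=\int A_a\tilde\nabla v\cdot\partial_\xi\tilde\nabla v$. This pairs $\tilde\nabla v$ with $\partial_\xi\tilde\nabla v$ and is not a vanishing contraction. In fact no bound $|I|\le C\|\tilde\nabla v\|\,\|\partial_\xi v\|$ with $C$ independent of $L$ holds for general $v$. Take $v$ whose partial Fourier transform in $\xi$ is $\rho(\tau)W(y)$, with $W$ real and $\rho$ concentrated at $|\tau|\gg L^{-1}\|\nabla W\|/\|W\|$. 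A direct computation gives $I=-L^{-1}\|\partial_\xi v\|^2\int_{\T^N}(b\cdot e)W^2/\|W\|^2$, while $\|\tilde\nabla v\|\approx\|\partial_\xi v\|$.

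Your fallback is not an argument either. The $y$-average of the equation involves the flux $\overline{e\cdot A\tilde\nabla_{L,e}v}$, which is not a function of $\bar v$. Poincar\'e controls $v-\bar v$ but not $\partial_\xi(v-\bar v)$.

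\textbf{The missing idea} is the frequency-by-frequency estimate. Set $w_\tau=\hat v(\tau,\cdot)$, $h_\tau=\hat g(\tau,\cdot)$ and $D_\tau=L^{-1}\nabla_y+i\tau e$, and test the transformed equation with $\overline{w_\tau}$ at each fixed $\tau$.
\begin{itemize}
\item The real part gives $\eta_0\|D_\tau w_\tau\|^2+\beta\|w_\tau\|^2\le2\|h_\tau\|\|w_\tau\|$, which is bilinear in $(h_\tau,w_\tau)$.
\item The antisymmetric part of $A$ enters only the imaginary part, where it is bounded by $\|A\|_{L^\infty}\|D_\tau w_\tau\|^2\le(2\|A\|_{L^\infty}/\eta_0)\|h_\tau\|\|w_\tau\|$.
\item Hence $c|\tau|\|w_\tau\|^2\le C\|h_\tau\|\|w_\tau\|$. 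Dividing by $\|w_\tau\|$ before integrating in $\tau$ yields $\|\partial_\xi v\|\le2c^{-1}(1+\|A\|_{L^\infty}/\eta_0)\|g\|$.
\end{itemize}
With this bound your global identity would also close, since $|I|\le\|A\|_{L^\infty}\int|\tau|\|D_\tau w_\tau\|^2d\tau\le C\|g\|\|\partial_\xi v\|$.

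The same fixed-$\tau$ formulation repairs existence. For fixed $L$ and $\tau$ the sesquilinear form is coercive on $H^1(\T^N;\CC)$, because $\nabla_yw=L(D_\tau w-i\tau ew)$. Each mode is then solved by Lax--Milgram, and the $\tau$-uniform bounds show that the reconstructed $v$ lies in $H^1$ and in $\Dcal_{L,e}$.
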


\begin{proof}
We only consider the case $L\neq 0$, since the case $L=0$ can be treated similarly and the proof is simpler. For clarity, we divide the proof into two steps. 

{\bf Step 1}:  For any $c\geq \gamma$, $L\in [-L_0,0)\cup (0,L_0]$ and $e\in\mathbb{S}^{N-1}$,  assuming $M_{c,L,e}(v) = g$ with $v \in \Dcal_{L,e}$ and $g \in L^2(\R \times \T^N)$, we show the estimates in \eqref{energy-H1}. 

First, integrating $M_{c,L,e}(v) = g$ against $v$ gives
	\begin{equation*}
		\begin{aligned}
			-\int_{\R \times \T^N} g v
			&=\int_{\R \times \T^N} A(y)\tilde{\nabla}_{L,e} v \cdot \tilde{\nabla}_{L,e} v 
			-(B(y) \cdot\tilde{\nabla}_{L,e} v) v + \beta v^2 \\ 
			&\geq \int_{\R \times \T^N} \eta_0 |\tilde{\nabla}_{L,e} v|^2 
			- (B(y) \cdot \tilde{\nabla}_{L,e} v)v + \beta v^2.
		\end{aligned}
	\end{equation*}
	Since $ \beta\geq \|B\|_{L^{\infty}(\T^N)}^2/\eta_0$ by \eqref{beta-gamma-bound}, we have
	\begin{equation}\label{key-inequality}
		(B \cdot \tilde{\nabla}_{L,e} v) v \leq \frac{\eta_0}{2}|\tilde{\nabla}_{L,e} v|^2 + \frac{\beta}{2} v^2.
	\end{equation}
By the Cauchy–Schwarz and Young inequalities, it then follows that
	\begin{equation*}
		\int_{\R \times \T^N}\eta_0| \tilde{\nabla}_{L,e} v|^2 + \beta v^2 
		\leq -2 \int_{\R \times \T^N} gv 
		\leq \frac{2}{\beta} \int_{\R \times \T^N} g^2 + \frac{\beta}{2}\int_{\R \times \T^N} v^2. 
	\end{equation*}
	Thus, we obtain
	\begin{equation}\label{energy 1}
		\int_{\R \times \T^N}\eta_0 | \tilde{\nabla}_{L,e} v|^2 + \frac{\beta}{2} v^2 
		\leq \frac{2}{\beta}\int_{\R \times \T^N}g^2. 
	\end{equation}

	To estimate $\partial_\xi v$ without assuming that $A$ is symmetric, we take the unitary partial Fourier transform in $\xi$, understood in the $L^2$ sense, and write
	$$w_\tau(y):=\widehat v(\tau,y)=\frac{1}{\sqrt{2\pi}}\int_{\R}\me^{-i\tau\xi}v(\xi,y)d\xi,\quad\hbox{and}\quad 	h_\tau(y)=\widehat g(\tau,y).$$
Since $v\in H^1(\R\times\T^N)$, we have $w_\tau\in H^1(\T^N;\CC)$ for almost every $\tau$. Since the coefficients of $M_{c,L,e}$ are independent of $\xi$, and since $\widehat{\partial_\xi v}(\tau,y)=i\tau\widehat v(\tau,y)$, the transformed equation of $M_{c,L,e}(v) = g$ is
\begin{equation}\label{fourier-M-equation}
	D_\tau\cdot(AD_\tau w_\tau)+B\cdot D_\tau w_\tau
	+ic\tau w_\tau-\beta w_\tau=h_\tau
	\quad\hbox{in }\,H^{-1}(\T^N;\CC), 
\end{equation}
where 
\begin{equation}\label{diffenernatial-Dtau}
	D_\tau=\frac1L\nabla_y+i\tau e.
\end{equation}
Testing \eqref{fourier-M-equation} against the complex conjugate  $\overline{w_\tau}$, and integrating by parts over $\T^N$, yields
\begin{equation}\label{fourier-M-identity}
	-\int_{\T^N}AD_\tau w_\tau\cdot\overline{D_\tau w_\tau} +\int_{\T^N}(B\cdot D_\tau w_\tau)\overline{w_\tau}+ic\tau \|w_\tau\|_{L^2(\T^N)}^2-\beta \|w_\tau\|_{L^2(\T^N)}^2
	=\int_{\T^N}h_\tau\overline{w_\tau}
\end{equation}
for almost every $\tau\in\R$.

Next, for almost every $\tau\in\R$, define
$$ a_\tau = \int_{\T^N} AD_\tau w_\tau\cdot\overline{D_\tau w_\tau} dy.$$
We claim that
\begin{equation}\label{re-im-atau}
	\operatorname{Re}a_\tau
	\geq
	\eta_0\|D_\tau w_\tau\|_{L^2(\T^N)}^2,
	\qquad
	|\operatorname{Im}a_\tau|
	\leq
	\|A\|_{L^{\infty}(\T^N)}\|D_\tau w_\tau\|_{L^2(\T^N)}^2,
\end{equation}
where $\operatorname{Re}a_\tau$ and
$\operatorname{Im}a_\tau$ denote the real and imaginary parts of $a_\tau$, respectively.
Indeed, decompose $A$ into its symmetric part $A^s=(A+A^T)/2$ and anti-symmetric part $A^a=(A-A^T)/2$. 
Since $A$ is real-valued, for every
$z=p+iq\in\CC^N$ with $p,q\in\R^N$, the symmetry
of $A^s$ gives
$
(A^sz)\cdot\overline z=
(A^sp)\cdot p+(A^sq)\cdot q
\in\R$.
On the other hand, the anti-symmetry of $A^a$ yields
$
\overline{(A^az)\cdot\overline z}
=(A^a\overline z)\cdot z
=-(A^az)\cdot\overline z$, which implies that $(A^az)\cdot\overline z$ is purely imaginary.
Consequently, by the uniform ellipticity of $A$ (see \eqref{uniform-elliptic}), we have 
$$
\operatorname{Re}\bigl((Az)\cdot\overline z\bigr) =(A^sz)\cdot\overline z=(Ap)\cdot p+(Aq)\cdot q \geq \eta_0\bigl(|p|^2+|q|^2\bigr)
	=\eta_0|z|^2.
$$
Moreover, for each $y\in\T^N$, there holds
$$
	\left|
	\operatorname{Im}\bigl((Az)\cdot\overline z\bigr)
	\right|=\left|(A^az)\cdot\overline z\right|\leq |A^az|\,|z|\leq \|A^a\|_{L^{\infty}(\T^N)}|z|^2\leq  \|A\|_{L^{\infty}(\T^N)} |z|^2.
$$
Applying these pointwise inequalities with
$z=D_\tau w_\tau(y)$ and integrating over $\T^N$,
we obtain \eqref{re-im-atau}.

Now, taking the real part of \eqref{fourier-M-identity},  and using the Cauchy-Schwarz inequality and the inequality \eqref{beta-gamma-bound}, we obtain
\begin{equation*}
	\begin{aligned}
	&\eta_0\|D_\tau w_\tau\|_{L^2(\T^N)}^2 +\beta \| w_\tau\|_{L^2(\T^N)}^2\\
	\leq\, & \|B\|_{L^{\infty}(\T^N)}\|D_\tau w_\tau\|_{L^2(\T^N)}\| w_\tau\|_{L^2(\T^N)}  +\|h_\tau\|_{L^2(\T^N)}\| w_\tau\|_{L^2(\T^N)}\\
	\leq\, &\frac{\eta_0}{2}\|D_\tau w_\tau\|^2_{L^2(\T^N)}+\frac{\beta}{2}\| w_\tau\|^2_{L^2(\T^N)} +\|h_\tau\|_{L^2(\T^N)}\| w_\tau\|_{L^2(\T^N)}.
\end{aligned}
\end{equation*}
This implies 
\begin{equation*}
	\eta_0\|D_\tau w_\tau\|^2_{L^2(\T^N)}+\beta\| w_\tau\|^2_{L^2(\T^N)} \leq2 \|h_\tau\|_{L^2(\T^N)}\| w_\tau\|_{L^2(\T^N)}.
\end{equation*}
In particular, we have
\begin{equation}\label{fourier-real-energy}
	\|D_\tau w_\tau\|^2_{L^2(\T^N)} \leq \frac{2}{\eta_0} \|h_\tau\|_{L^2(\T^N)}\| w_\tau\|_{L^2(\T^N)}, 
\end{equation}
and 
\begin{equation}\label{fourier-real-energy-1}
	\begin{aligned}
\|B\|_{L^{\infty}(\T^N)}\|D_\tau w_\tau\|_{L^2(\T^N)}\| w_\tau\|_{L^2(\T^N)} \leq & \frac{\eta_0}{2}\|D_\tau w_\tau\|^2_{L^2(\T^N)}+\frac{\beta}{2} \| w_\tau\|^2_{L^2(\T^N)}\\
\leq & \|h_\tau\|_{L^2(\T^N)}\| w_\tau\|_{L^2(\T^N)}.
\end{aligned}
\end{equation}
Furthermore, taking the imaginary part of \eqref{fourier-M-identity} gives
\begin{equation*}
	\begin{aligned}
	c|\tau|\|w_\tau\|^2_{L^2(\T^N)} \leq  \|h_\tau\|_{L^2(\T^N)}\| w_\tau\|_{L^2(\T^N)}&+\|A\|_{L^{\infty}(\T^N)}\|D_\tau w_\tau\|_{L^2(\T^N)}^2\\
	&+ \|B\|_{L^{\infty}(\T^N)}\|D_\tau w_\tau\|_{L^2(\T^N)}\| w_\tau\|_{L^2(\T^N)}.   
\end{aligned}
\end{equation*}
It then follows from \eqref{fourier-real-energy} and \eqref{fourier-real-energy-1} that 
\begin{equation*}
c|\tau|\|w_\tau\|_{L^2(\T^N)} \leq 2\left(1+\frac{\|A\|_{L^{\infty}(\T^N)}}{\eta_0}\right) \|h_\tau\|_{L^2(\T^N)}.
\end{equation*}
Squaring this inequality, integrating over $\tau\in\R$, and using Plancherel's theorem, we obtain
\begin{equation}\label{energy 2}
	\|\partial_\xi v\|_{L^2(\R\times\T^N)}^2
	\leq \frac{4}{c^2}\left(1+\frac{\|A\|_{L^{\infty}(\T^N)}}{\eta_0}\right)^2 \|g\|_{L^2(\R\times\T^N)}^2.
\end{equation}

Finally, since $\nabla_yv=L(\tilde\nabla_{L,e}v-e\partial_\xi v)$, \eqref{energy 1} and \eqref{energy 2} imply
\begin{equation*}
	\begin{aligned}
		\|\nabla_yv\|_{L^2(\R\times\T^N)}^2
		&\leq2L^2\left(\|\tilde\nabla_{L,e}v\|_{L^2(\R\times\T^N)}^2
		+\|\partial_\xi v\|_{L^2(\R\times\T^N)}^2\right)\\
		&\leq2L^2\left(\frac{2}{\eta_0\beta}+\frac{4}{c^2}\left(1+\frac{\|A\|_{L^{\infty}(\T^N)}}{\eta_0}\right)^2\right)
		\|g\|_{L^2(\R\times\T^N)}^2.
	\end{aligned}
\end{equation*}
Combining these estimates, we obtain
\begin{equation*}
	\|v\|_{H^1(\R\times\T^N)}^2
	\leq  \left(\frac{4}{\beta^2}+\frac{4L^2}{\eta_0\beta}
	+\frac{4(1+2L^2)}{c^2}\left(1+\frac{\|A\|_{L^{\infty}(\T^N)}}{\eta_0}\right)^2\right) \|g\|_{L^2(\R\times\T^N)}^2.
\end{equation*}
Since $|L|\leq L_0$ and $c\geq \gamma>0$, this completes the proof of Step 1.

{\bf Step 2}: We show the invertibility of the operator $M_{c,L,e}$.

Fix $c>0$, $L\neq 0$ and $e\in\mathbb{S}^{N-1}$. We first prove the surjectivity of
$M_{c,L,e}$. Let $g\in L^2(\R\times\T^N)$ be arbitrary, and write
$h_\tau(y)=\widehat g(\tau,y)$ for its partial Fourier transform in $\xi$. 
For almost every $\tau\in\R$, define the sesquilinear form
$\mathcal{A}_{\tau}:H^1(\T^N;\CC)\times H^1(\T^N;\CC)\to\CC$ by
\begin{equation*}
\mathcal{A}_{\tau}(w,z)=\int_{\T^N}A D_\tau w\cdot\overline{D_\tau z}
		-\int_{\T^N}(B\cdot D_\tau w)\overline z   +(\beta-ic\tau)\int_{\T^N}w\overline z,
\end{equation*}
where $D_\tau$ is the differential operator defined in \eqref{diffenernatial-Dtau}.
Clearly, for each fixed $\tau\in\R$, the form $\mathcal{A}_{\tau}$ is continuous on
$H^1(\T^N;\CC)\times H^1(\T^N;\CC)$.
Moreover, by the uniform ellipticity of $A$ and \eqref{beta-gamma-bound}, we have, for every $w\in H^1(\T^N;\CC)$,
\begin{equation*}
\begin{aligned}
\operatorname{Re}\mathcal{A}_{\tau}(w,w)
	&\geq\eta_0\|D_\tau w\|_{L^2(\T^N)}^2-\|B\|_{L^\infty(\T^N)}
		\|D_\tau w\|_{L^2(\T^N)}\|w\|_{L^2(\T^N)}+\beta\|w\|_{L^2(\T^N)}^2\\
		&\geq\frac{\eta_0}{2}\|D_\tau w\|_{L^2(\T^N)}^2+\frac{\beta}{2}\|w\|_{L^2(\T^N)}^2.
	\end{aligned}
\end{equation*}
For fixed $L\neq0$ and $\tau\in\R$, the right-hand side controls the
$H^1(\T^N)$-norm of $w$, since $\nabla_yw=L(D_\tau w-i\tau ew)$.
Therefore, the complex Lax--Milgram theorem yields a unique
$w_\tau\in H^1(\T^N;\CC)$ such that
\begin{equation}\label{weak-fourier-M}
	\mathcal{A}_{\tau}(w_\tau,z)=-\int_{\T^N}h_\tau\overline z
	\quad\hbox{for every }\,z\in H^1(\T^N;\CC).
\end{equation}
Equivalently, $w_\tau$ is the unique weak solution of \eqref{fourier-M-equation},
and hence the arguments in Step 1 apply directly. In particular, for almost every
$\tau\in\R$, we have
\begin{equation*}
	\begin{cases}
	\|w_\tau\|_{L^2(\T^N)} \leq \displaystyle\frac{2}{\beta}\|h_\tau\|_{L^2(\T^N)},\vspace{5pt}\\
\|D_\tau w_\tau\|_{L^2(\T^N)}^2\leq \displaystyle\frac{4}{\eta_0\beta}\|h_\tau\|_{L^2(\T^N)}^2,\vspace{5pt}\\
c|\tau|\|w_\tau\|_{L^2(\T^N)} \leq \displaystyle 2\left(1+\frac{\|A\|_{L^\infty(\T^N)}}{\eta_0}\right)
\|h_\tau\|_{L^2(\T^N)}.
	\end{cases}
\end{equation*}
Integrating the above inequalities with respect to $\tau$ and using Plancherel's theorem, we obtain
a function $v\in L^2(\R\times\T^N)$ whose partial Fourier transform in $\xi$ is $w_\tau$,  such that
$\partial_\xi v\in L^2(\R\times\T^N)$ and $\tilde{\nabla}_{L,e}v\in L^2(\R\times\T^N)$.
Moreover, since
$\nabla_yw_\tau=L(D_\tau w_\tau-i\tau ew_\tau)$,
the estimates above also imply that $\nabla_yv\in L^2(\R\times\T^N)$.
Therefore, $v\in H^1(\R\times\T^N)$. Furthermore, taking the inverse Fourier transform in
\eqref{fourier-M-equation}, we obtain $M_{c,L,e}(v)=g$ in the sense of distributions in $\R\times\T^N$. 
Since $g$ is real-valued, the imaginary part of $v$ satisfies
$M_{c,L,e}(\operatorname{Im}v)=0$. By \eqref{energy 1}, $\operatorname{Im}v=0$, and hence $v$ is real-valued.
Consequently,
$$\tilde{\nabla}_{L,e}\cdot(A\tilde{\nabla}_{L,e}v)=
g-B\cdot\tilde{\nabla}_{L,e}v-c\partial_\xi v+\beta v
\in L^2(\R\times\T^N),$$
which shows that $v\in\Dcal_{L,e}$. Hence,
$M_{c,L,e}:\Dcal_{L,e}\to L^2(\R\times\T^N)$ is surjective.

On the other hand, if $M_{c,L,e}(v)=0$ for some $v\in\Dcal_{L,e}$, then \eqref{energy 1} immediately gives $v=0$.
Thus, $M_{c,L,e}$ is also injective, and hence invertible. The estimates in \eqref{energy-H1} follow directly from Step 1. This completes the proof of Lemma \ref{solvability-M}.
\end{proof}

We next study the homogenization of $M_{c,L,e}$ as $L\to 0$, with $M_{c,0,e}$ as the limiting operator. 
For any $g\in L^2(\R\times\T^N)$, define its mean $\bar{g}\in L^2(\R)$ by
\begin{equation}\label{mean definition}
	\bar{g}(\xi)=\int_{\T^N}g(\xi,y) dy
	\quad\hbox{for almost every }\,\xi\in\R.
\end{equation}
By Lemma \ref{solvability-M}, $M_{c,0,e}^{-1}(\bar{g})\in H^2(\R)$. 
In what follows, we identify this function with its $y$-independent extension to $\R\times\T^N$, which belongs to $H^2(\R\times\T^N)$.

Since $A$ is not assumed to be symmetric, we also introduce the adjoint corrector $\tilde{\chi}:\T^N\to\R^N$, defined as the unique solution of
\begin{equation}\label{ATcorrector}
\nabla\cdot\bigl(A^T(I_N+\nabla\tilde{\chi})\bigr)=0
\quad\hbox{in }\T^N, \qquad
	\int_{\T^N}\tilde{\chi} dy=0.
\end{equation}
Let $(A^T)^{\rm hom}$ denote the homogenized matrix associated with $A^T$, defined by \eqref{a harmonic mean} with $(A,\chi)$ replaced by $(A^T,\tilde{\chi})$. 
The standard duality identity for homogenized matrices (see \cite{blp,jikov}) yields
\begin{equation}\label{adjoint-homo-A}
	(A^T)^{\rm hom}=(A^{\rm hom})^T.
\end{equation}

\begin{lemma}\label{Homogenization}
Fix $c>0$ and $e\in \mathbb{S}^{N-1}$. Then, for every $g \in L^2(\R \times \T^N)$, we have
\begin{equation}\label{homogenization process}
M_{c_n,L_n,e_n}^{-1}(g_n) \to M_{c,0,e}^{-1}(\bar{g}) \;
\hbox{ in }\; H^{1}(\R \times \T^N) \;\hbox{ as }\; n\to+\infty,
\end{equation}
and 
\begin{equation}\label{limit-Ln0}
	M_{c_n,0,e_n}^{-1}(\varphi_n) \to M_{c,0,e}^{-1}(\bar{g}) \;
	\hbox{ in }\; H^{1}(\R \times \T^N) \;\hbox{ as }\; n\to+\infty,
\end{equation}	
 for all sequences $\{g_n\}_{n \in \N} \subset L^2(\R \times \T^N)$, $\{\varphi_n\}_{n \in \N} \subset L^2(\R)$,
	$\{c_n\}_{n \in \N} \subset (0, + \infty)$,
	$\{L_n\}_{n \in \N} \subset \R^*$, and $\{e_n\}_{n\in\N} \subset \mathbb{S}^{N-1}$
	such that $\|g_n - g\|_{L^2(\R \times \T^N)} \to 0$, $\|\varphi_n - \bar{g}\|_{L^2(\R)} \to 0$,
	$c_n \to c$, $L_n \to 0$ and $e_n\to e$ as $n \to +\infty$, where $\bar{g}$ is the mean of $g$ over $y\in\T^N$, as defined in \eqref{mean definition}. 
\end{lemma}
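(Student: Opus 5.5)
Throughout, write $v_n:=M_{c_n,L_n,e_n}^{-1}(g_n)$ and $v_0:=M_{c,0,e}^{-1}(\bar g)$. The second statement \eqref{limit-Ln0} is elementary and I would dispose of it first. $M_{c_n,0,e_n}$ and $M_{c,0,e}$ are constant-coefficient ODE operators whose coefficients $A^{\rm hom}e_n\cdot e_n$ and $\langle B\rangle_A\cdot e_n+c_n$ converge. Hence
\[
M_{c,0,e}\bigl(M_{c_n,0,e_n}^{-1}\varphi_n\bigr)=\varphi_n+R_n ,
\]
where the remainder $R_n$ is bounded by $o(1)\,\|M_{c_n,0,e_n}^{-1}\varphi_n\|_{H^2(\R)}$. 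The $H^2$ bound is uniform, either from the Fourier symbol or from Lemma \ref{solvability-M} together with the equation. Applying $M_{c,0,e}^{-1}$ and the $H^1$ estimate of Lemma \ref{solvability-M} then gives \eqref{limit-Ln0}.

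For \eqref{homogenization process} I would first prove \emph{weak} convergence by the oscillating test function method. Since $c_n\ge c/2$ for large $n$, Lemma \ref{solvability-M} bounds $\|v_n\|_{H^1}$ and $\|\tilde\nabla_{L_n,e_n}v_n\|_{L^2}$ uniformly. Because $\nabla_y v_n=L_n(\tilde\nabla v_n-e_n\partial_\xi v_n)$, we also get $\|\nabla_y v_n\|_{L^2}=O(|L_n|)$. So along a subsequence $v_n\rightharpoonup v$ in $H^1(\R\times\T^N)$, and $v=v(\xi)$ is independent of $y$.

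To identify $v$, I test the weak form of $M_{c_n,L_n,e_n}v_n=g_n$ with
\[
\psi_n(\xi,y)=\varphi(\xi)+L_n\bigl(\varphi'(\xi)\,\tilde\chi(y)\cdot e_n+\varphi(\xi)\,\zeta(y)\bigr),\qquad \varphi\in C_c^\infty(\R),
\]
where $\tilde\chi$ is the adjoint corrector \eqref{ATcorrector} and $\zeta$ is given by \eqref{Bcorrector}. Then
\[
\tilde\nabla\psi_n=\varphi'(I_N+\nabla\tilde\chi)e_n+\varphi\nabla\zeta+O(L_n).
\]
The dangerous terms combine into
\[
\int\tilde\nabla v_n\cdot\bigl[\varphi' A^T(I_N+\nabla\tilde\chi)e_n\bigr]
\quad\text{and}\quad
\int\tilde\nabla v_n\cdot\bigl[\varphi\,(B-A^T\nabla\zeta)\bigr].
\]
In both, the bracket has zero $y$-divergence by the cell problems, so integrating $\tilde\nabla$ by parts moves only a $\xi$-derivative onto the bracket. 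This leaves $-\int v_n\,\varphi''\,A^T(I_N+\nabla\tilde\chi)e_n\cdot e_n$ and $-\int v_n\,\varphi'\,(B-A^T\nabla\zeta)\cdot e_n$. Passing to the limit and using \eqref{adjoint-homo-A} yields $A^{\rm hom}e\cdot e$ and $\langle B\rangle_A\cdot e$. All remaining terms are $O(L_n)$ times bounded quantities, and $\int g_n\psi_n\to\int\bar g\varphi$. Thus $v$ is a weak $H^1$ solution of $M_{c,0,e}v=\bar g$. By uniqueness (Lemma \ref{solvability-M}), $v=v_0$, and the whole sequence converges weakly.

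Upgrading to \emph{strong} $H^1$ convergence is the main obstacle. The domain $\R\times\T^N$ is unbounded, and $A$ is nonsymmetric with drift, so the plain energy identity does not immediately control $\|\partial_\xi(v_n-v_0)\|$. My plan is a density-plus-corrector argument. By linearity and the uniform bound of Lemma \ref{solvability-M}, it suffices to treat $g_n=g$ with $g$ smooth and rapidly decaying, and it suffices to handle the $\bar g$ part. For such data $v_0\in H^k$ for all $k$. I would build
\[
w_n=v_0+L_n\bigl(v_0'\chi\cdot e_n+v_0\,\eta\bigr)+L_n^2(\cdots),
\]
using second-order correctors analogous to $\chi_2$ and $\eta_1$ from \eqref{corrector-chi2}--\eqref{corrector-eta1}. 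These are chosen so that the residual $M_{c_n,L_n,e_n}w_n-\bar g$ is $o(1)$ in $L^2$: the $O(L_n^{-1})$ and $O(1)$ oscillating parts cancel by the cell problems, exactly as in the formal two-scale expansion. The non-averaged part of $g$, namely $g-\bar g$, has zero $y$-mean, so it can be written as $\nabla_y\cdot G$ with $L_n\nabla_y\cdot G=O(L_n)$ absorbed likewise. Lemma \ref{solvability-M} applied to $v_n-w_n$ then gives $\|v_n-w_n\|_{H^1}\to0$. Since $\|w_n-v_0\|_{H^1}=O(L_n)$ — the $y$-derivatives of the corrector terms carry the factor $L_n$ — we get $v_n\to v_0$ in $H^1$. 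The delicate point, which I would check carefully, is that every residual is genuinely in $L^2$ rather than merely $H^{-1}$, because the $\partial_\xi$ estimate of Lemma \ref{solvability-M} requires $L^2$ forcing.
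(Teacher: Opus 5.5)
Your proposal is correct. The proof of \eqref{limit-Ln0} and the identification of the weak limit via the test function $\varphi+L_n(\varphi'\tilde\chi\cdot e_n+\varphi\zeta)$ are in substance the paper's arguments. The genuine difference is in the upgrade to strong $H^1$ convergence.

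The paper works with arbitrary $L^2$ data and uses only the first-order corrector. It tests the equation for $v_n-v_0-L_nv_0'\chi\cdot e_n$ against itself. The resulting $O(1)$ oscillating residual, which you would cancel with $\chi_2$- and $\eta_1$-type correctors, is instead disposed of because it converges strongly in $L^2$ while the test function converges weakly to $0$. This is where the weak-limit identification is really used. It gives $v_n\to v_0$ and $\tilde\nabla_{L_n,e_n}v_n\to(I_N+\nabla\chi)ev_0'$ in $L^2$, with only $v_0\in H^2$. Strong convergence of $\partial_\xi v_n$ does not follow from that of $\tilde\nabla_{L_n,e_n}v_n$. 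The paper obtains it separately by mollifying in $\xi$, which commutes with $M_{c_n,L_n,e_n}$, and using the uniform $\partial_\xi$-bound of Lemma \ref{solvability-M}.

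Your route costs a density argument, extra correctors and $v_0\in H^4$. In exchange, Lemma \ref{solvability-M} gives the $H^1$ convergence in one stroke, with a rate $O(|L_n|+|c_n-c|+|e_n-e|)$ for smooth data. Your weak-compactness step also becomes logically superfluous, since the limit is built into $w_n$. This is essentially the machinery the paper uses later in Section 4.

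Two details need fixing when you write it out. First, the first-order term $L_nv_0\eta$ must be zero. The drift in $M_{c,L,e}$ is in non-divergence form and produces no $O(L_n^{-1})$ term acting on $v_0$; the corrector $\zeta$ is needed only in the adjoint test function. Second, $g-\bar g$ should be produced exactly by adding $L_n^2\Psi$ to $w_n$, with $\nabla_y\cdot(A\nabla_y\Psi)=g-\bar g$, as with $\psi$ in Lemma \ref{regularity-eta}. This leaves an $O(L_n)$ residual in $L^2$. Merely writing $g-\bar g=\nabla_y\cdot G$ gives a forcing that is small only in a dual norm, which is exactly the $L^2$ versus $H^{-1}$ issue you flagged.
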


\begin{proof}
We first prove \eqref{homogenization process} in three steps and then verify \eqref{limit-Ln0}. 
For each $n\in\N$, let 
$$v_n = M_{c_n,L_n,e_n}^{-1}(g_n) \in H^{1}(\R \times \T^N).$$ By Lemma \ref{solvability-M},  
the sequence $\{v_n\}_{n\in\N}$ is bounded in $H^1(\R \times \T^N)$, and $ \{\tilde{\nabla}_{L_n,e_n}v_n\}_{n\in\N}$ is bounded in $L^2(\R \times \T^N)$. Hence, after passing to a subsequence, we have
$ v_n\rightharpoonup v_0$ weakly in $H^1(\R \times \T^N)$
for some $v_0\in H^1(\R \times \T^N)$. Moreover, since
$ \nabla_yv_n=L_n(\tilde{\nabla}_{L_n,e_n}v_n-e_n\partial_\xi v_n)$,
we obtain
\begin{equation}\label{gradient-vn}
	\|\nabla_y v_n\|_{L^2(\R\times \T^N )}  \leq C|L_n|, 
\end{equation}
for some constant $C>0$ independent of $n$. Consequently, we have $\nabla_y v_0 = 0$, and hence $v_0$ can be viewed as an $H^{1}(\R)$ function, and we can set $v_0' = \partial_\xi v_0$.

{\bf Step 1:} We show that $M_{c,0,e}(v_0) = \bar{g}$. 
	
For any $n\in\N$ and $\phi \in H^2(\R)$,
define 
$$\psi_n(\xi,y) = \phi(\xi) + L_n \phi'(\xi) \tilde{\chi}(y) \cdot e_n+ L_n \phi(\xi) \zeta(y) \quad \hbox{for }\; \xi\in\R,\,y\in\T^N, $$
where $\tilde{\chi} \in C^{ 2}(\T^N;\R^N)$ and $\zeta \in C^{ 1}(\T^N;\R)$ are, respectively, the unique solutions of \eqref{ATcorrector} and \eqref{Bcorrector}. Then $\psi_n\in H^1(\R\times\T^N)$. Taking $\psi_n$ as a test function in $M_{c_n,L_n,e_n}(v_n) = g_n$ gives
	\begin{equation}\label{right-left}
		\begin{aligned}
			&\quad - \int_{\R \times \T^N} g_n \left(\phi + L_n \phi' \tilde{\chi} \cdot e_n + L_n  \phi \zeta\right) \\
			&= \int_{\R \times \T^N} A\tilde{\nabla}_{L_n,e_n} v_n \cdot 
			\left(\phi'(I_N+ \nabla \tilde{\chi}) e_n 
			+ \phi \nabla\zeta  
			+ L_n \phi'' (\tilde{\chi} \cdot e_n) e_n
			+ L_n \phi' \zeta e_n\right) \\
			&\quad + \int_{\R \times \T^N} 
			\left(\beta v_n 
			- c_n \partial_\xi v_n 
			- B \cdot \tilde{\nabla}_{L_n,e_n} v_n\right) 
			\left(\phi + L_n \phi' \tilde{\chi} \cdot e_n + L_n \phi \zeta\right),
		\end{aligned}
	\end{equation}
	where $A$, $B$, $\tilde{\chi}$, $\nabla\tilde{\chi}$, $\zeta$ and $\nabla\zeta$ are evaluated at $y$,
	while $\phi$, $\phi'$ and $\phi''$ are evaluated at $\xi$.
The right-hand side of \eqref{right-left} can be rewritten as $J_n^1+J_n^2$, 
where
\begin{equation*}
\begin{aligned}
	J_n^1=\int_{\R \times \T^N} & A^T(I_N+ \nabla\tilde{\chi})e_n \cdot (\tilde{\nabla}_{L_n,e_n} v_n) \phi' \\
	& + \int_{\R \times \T^N} (A^T \nabla \zeta - B) \cdot (\tilde{\nabla}_{L_n,e_n} v_n)  \phi 
	+ \beta v_n \phi- c_n \partial_\xi v_n \phi,
\end{aligned}
\end{equation*}
and 
\begin{equation*}
		\begin{aligned}
	 J_n^2=&L_n \int_{\R \times \T^N} A\tilde{\nabla}_{L_n,e_n}v_n \cdot
			\left(\phi'' (\tilde{\chi} \cdot e_n) + \phi' \zeta\right)e_n \\
			&\quad +L_n \int_{\R \times \T^N} \left(\beta v_n - c_n \partial_\xi v_n - B\cdot \tilde{\nabla}_{L_n,e_n} v_n\right)
			\left(\phi' \tilde{\chi} \cdot e_n + \phi \zeta\right). 
		\end{aligned}
	\end{equation*} 

Next, we identify the limits of $J_n^1$ and $J_n^2$ as $n\to+\infty$.  Since $\tilde{\chi}(\cdot)$ and $\zeta(\cdot)$ satisfy \eqref{ATcorrector} and \eqref{Bcorrector}, we have
\begin{equation*}
	\int_{\R \times \T^N} A^T(I_N+ \nabla \tilde{\chi}) e_n \cdot \nabla_y v_n \phi'
	= - \int_{\R \times \T^N} \nabla \cdot (A^T(I_N+ \nabla \tilde{\chi})) \cdot e_n v_n \phi' = 0,
\end{equation*}
and
\begin{equation*}
	\int_{\R \times \T^N} (A^T \nabla \zeta - B) \cdot \nabla_y v_n \phi
	= - \int_{\R \times \T^N} \nabla \cdot (A^T \nabla \zeta - B) v_n \phi 
	= 0.
\end{equation*}
Consequently, $J_n^1$ simplifies to 
\begin{equation*}
J_n^1=\int_{\R \times \T^N} A^T(I_N+ \nabla\tilde{\chi})e_n \cdot e_n \partial_{\xi}v_n\phi' 
+ (A^T \nabla \zeta - B) \cdot e_n\partial_{\xi}v_n  \phi 
+ \beta v_n \phi
- c_n \partial_\xi v_n \phi.
\end{equation*} 
Since $c_n\to c$, $e_n\to e$ and $v_n \rightharpoonup v_0$ weakly in $H^{1}(\R\times \T^N)$ as $n\to+\infty$, and since the sequence $\{\partial_{\xi}v_n\}_{n\in\N}$ is bounded in $L^2(\R\times \T^N)$ by Lemma \ref{solvability-M}, we deduce that, as $n\to+\infty$, 
\begin{equation*}
	J_n^1\to \int_{\R \times \T^N} A^T(I_N+ \nabla \tilde{\chi})e \cdot e v_0' \phi' 
	+ (A^T \nabla \zeta - B) \cdot e v_0' \phi+ \beta v_0 \phi - c v_0'\phi.
\end{equation*} 
Furthermore, by the definitions of $(A^T)^{\rm hom}$ and $\langle B \rangle_{A}$, we have
\begin{equation*}
	J_n^1\to  \int_{\R} ((A^T)^{\rm hom}  e \cdot e) v_0' \phi'-\langle B \rangle_{A}\cdot e v_0' \phi
+ \beta v_0 \phi- c v_0' \phi. 
\end{equation*}
Owing to the identity \eqref{adjoint-homo-A} and the fact that $(A^{\rm hom})^T e\cdot e=A^{\rm hom} e\cdot e$, we obtain 
$$J_n^1\to   \int_{\R} (A^{\rm hom}  e \cdot e) v_0' \phi'-\langle B \rangle_{A}\cdot e v_0' \phi
+ \beta v_0 \phi- c v_0'\phi.$$

Regarding the limit of $J_n^2$, it follows again from Lemma \ref{solvability-M} that both $\{\|v_n\|_{H^1(\R\times \T^N)}\}_{n\in\N}$ and $\{\|\tilde{\nabla}_{L_n,e_n}v_n\|_{L^2(\R\times \T^N)}\}_{n\in\N}$ are bounded. Together with the boundedness of $\{c_n\}_{n\in\N}$, this implies that 
$J_n^2\to 0$ as $n\to+\infty$.

To identify the limit of the left-hand side of \eqref{right-left},  we note that $g_n \to g$ and  $\phi + L_n \phi' \tilde{\chi} \cdot e_n + L_n \phi \zeta \to \phi$ in $L^{2}(\R\times \T^N)$ as $n\to+\infty$. Therefore, passing to the limit as $n\to +\infty$ gives
\begin{equation*}
	-\int_{\R \times \T^N} g_n (\phi + L_n \phi' \tilde{\chi} \cdot e_n + L_n \phi \zeta) 
	\to - \int_{\R \times \T^N} g \phi 
	= -\int_{\R} \bar{g} \phi.
\end{equation*}
Combining these estimates, we obtain
	\begin{equation*}
		\int_{\R} (A^{{\rm hom}} e \cdot e) v_0' \phi'
		- \langle B \rangle_{A}\cdot e v_0' \phi
		+ \beta v_0 \phi 
		- c v_0' \phi
		= -\int_{\R} \bar{g} \phi 
	\end{equation*}
	for all $\phi \in H^2(\R)$,
	and then for all $\phi \in H^1(\R)$ by density. Since $A^{\rm hom}e\cdot e>0$, this identity yields
	\[
	v_0''
	=
	\frac{\bar g-(\langle B\rangle_A\cdot e+c)v_0'
		+\beta v_0}{A^{\rm hom}e\cdot e}
	\in L^2(\R).
	\]
	Thus, $v_0\in H^2(\R)$ and
	$v_0=M_{c,0,e}^{-1}(\bar g)$.
	This implies that $v_0 \in H^2(\R)$ and $M_{c,0,e}(v_0) = \bar{g}$.

{\bf Step 2:} We show that 
\begin{equation*}
	v_n \to v_0 \quad\hbox{and} \quad\tilde{\nabla}_{L_n,e_n}v_n \to (I_N+\nabla \chi)e v_0' \quad \hbox{in } \; L^2(\R \times \T^N)\;
 \hbox{ as }	\;n \to +\infty,
\end{equation*}	
where $\chi \in C^{ 2}(\T^N;\R^N)$ is the corrector of $A$ defined in \eqref{Acorrector}.

To do so, for each $n\in\N$, define
$$w_n(\xi,y) = v_n(\xi,y) - v_0(\xi) - L_nv_0'(\xi)\chi(y) \cdot e_n  \quad\hbox{for }\; (\xi,y)\in \R\times\T^N.$$ 
Since $v_0 \in H^2(\R)$, and  the sequence $\{\|v_n\|_{H^1(\R\times \T^N)}\}_{n\in\N}$ is bounded, it is clear that $\{w_n\}_{n\in\N}$ is bounded in $H^1(\R\times\T^N)$. 
Recall that $M_{c_n,L_n,e_n}(v_n)=g_n$. A direct computation shows that $w_n$ satisfies
	\begin{equation*}
		\begin{aligned}
			M_{c_n,L_n,e_n}(w_n)
			&= -L_n A e_n \cdot e_n (\chi \cdot e_n) v_0^{(3)}
			-A(I_N+ \nabla \chi) e_n \cdot e_n v_0'' \vspace{5pt}\\
			&\quad- \nabla \cdot (A (\chi \cdot e_n)e_n) v_0'' - B\cdot(I_N+ \nabla \chi) e_n v_0'
		 - L_n (B \cdot e_n) (\chi \cdot e_n) v_0'' \vspace{5pt}\\
		&\quad	- L_n c_n  (\chi \cdot e_n) v_0''
			+ L_n \beta (\chi \cdot e_n) v_0'
			+ g_n - c_n v_0'+\beta v_0,
		\end{aligned}
	\end{equation*}
where  $A e_n \cdot e_n (\chi \cdot e_n) v_0^{(3)}$ is understood as an element of $H^{-1}(\R \times \T^N)$.
We write this identity as follows:
\begin{equation}\label{convergence}
	M_{c_n,L_n,e_n}w_n=H_n^1+H_n^2+H_n^3,
\end{equation}
where
\begin{equation*}
	\left\{\begin{aligned}
		H_n^1&=-A(I_N+ \nabla \chi)e_n \cdot e_n v_0''- \nabla \cdot (A (\chi \cdot e_n)e_n) v_0''- B\cdot (I_N+\nabla \chi) e_nv_0' +g_n-c_nv_0'+\beta v_0,  \vspace{5pt}\\
		H_n^2&= -L_n\left((B \cdot e_n) (\chi \cdot e_n) v_0''+  c_n  (\chi \cdot e_n) v_0''- \beta (\chi \cdot e_n) v_0'\right),\vspace{5pt}\\
		H_n^3 &= -L_nA e_n \cdot e_n (\chi \cdot e_n) v_0^{(3)}. 
	\end{aligned}\right.
\end{equation*}

Now, taking $w_n$ as a test function in \eqref{convergence} gives
$$
\left(M_{c_n,L_n,e_n}(w_n),w_n\right)_{(H^{-1};H^{1})} 
=\int_{\R \times \T^N} \left(H_n^1+H_n^2\right)w_n+\left(H_n^3,w_n\right)_{(H^{-1};H^{1})},  
$$
where 
$(\cdot,\cdot)_{(H^{-1};H^1)}$ denotes the duality pairing
between $H^{-1}(\R\times\T^N)$ and $H^1(\R\times\T^N)$.

On the one hand, 
since $c_n\to c$, $e_n\to e$, $L_n\to 0$, and $g_n \to g$ in $L^{2}(\R\times \T^N)$, we have 
$$ H_n^1 \to -A(I_N+ \nabla \chi)e \cdot e v_0''- \nabla \cdot (A (\chi \cdot e)e) v_0''- B\cdot (I_N+\nabla \chi) ev_0' +g-cv_0'+\beta v_0,$$
and $ H_n^2\to 0$ as $n\to+\infty$ in $L^2(\R\times \T^N)$. 
Since $H_n^1+H_n^2$ converges strongly in
$L^2(\R\times\T^N)$ and $w_n\rightharpoonup0$
weakly in this space, we obtain
$$ \int_{\R \times \T^N} \left(H_n^1+H_n^2\right)w_n \to 0 \quad\hbox{as }\; n\to+\infty.$$
Moreover, integration by parts in $\xi$ and the Cauchy-Schwarz inequality give
$$
\begin{aligned}
	\left|\left(H_n^3,w_n\right)_{(H^{-1};H^1)}\right| &=|L_n|\left|\int_{\R\times\T^N}
	(Ae_n\cdot e_n)(\chi\cdot e_n)v_0''\partial_\xi w_n\right|\\
	&\leq C|L_n|\|v_0''\|_{L^2(\R)}
	\|\partial_\xi w_n\|_{L^2(\R\times\T^N)} \mathop{\longrightarrow}_{n\to+\infty} 0.
\end{aligned}
$$

On the other hand, by using an estimate similar to \eqref{key-inequality}, we deduce that for each $n\in\N$, 
\begin{equation*}
	\begin{aligned}
		-\left(M_{c_n,L_n,e_n}(w_n),w_n\right)_{(H^{-1};H^{1})}=& \int_{\R \times \T^N} A \tilde{\nabla}_{L_n,e_n}w_n \cdot \tilde{\nabla}_{L_n,e_n}w_n 
		- B\cdot (\tilde{\nabla}_{L_n,e_n}w_n)w_n + \beta w_n^2 \\
		\geq& \frac{1}{2}\int_{\R \times \T^N} \eta_0|\tilde{\nabla}_{L_n,e_n}w_n|^2 + \beta w_n^2.
	\end{aligned}
\end{equation*}
Thus, combining the above, we obtain 
 $v_n \to v_0$ and $\tilde{\nabla}_{L_n,e_n}v_n \to (I_N+\nabla \chi)e v_0'$ in $L^2(\R \times \T^N)$ as $n \to +\infty$.
Finally, since every subsequence admits a further subsequence for which the above convergences hold, and the limit is uniquely determined by $v_0=M^{-1}_{c,0,e}(\bar{g})$, these convergences hold for the entire sequence. This completes the proof of Step 2.

{\bf Step 3:} We show that $v_n \to v_0$ in $H^1(\R \times \T^N)$ as $n \to +\infty$. 

Let $\rho\in C_c^\infty(\mathbb R)$ be nonnegative with $\int_{\mathbb R}\rho=1$. For any $\varepsilon>0$, define
$\rho_ \varepsilon(\xi)= \varepsilon^{-1}\rho(\xi/ \varepsilon)$, and
$$ S_\varepsilon u(\xi,y):=(\rho_ \varepsilon*_\xi u)(\xi,y)=\int_{\R} \rho_{\varepsilon}(z)u(\xi-z,y)dz\quad\hbox{for } \,u\in L^2(\R\times\T^N). $$
By Young's convolution inequality,
$S_\varepsilon$ maps $H^1(\R \times \T^N)$ into itself.
Since the coefficients of $M_{c_n,L_n,e_n}$ are independent of $\xi$, the convolution in $\xi$ commutes with $M_{c_n,L_n,e_n}$.
Moreover, since $v_n\in\mathcal D_{L_n,e_n}$, we have
$S_\varepsilon v_n\in H^1(\R \times \T^N)$ and
$$\tilde{\nabla}_{L_n,e_n}\cdot(A\tilde{\nabla}_{L_n,e_n}S_\varepsilon v_n)=S_\varepsilon\bigl[\tilde{\nabla}_{L_n,e_n}\cdot(A\tilde{\nabla}_{L_n,e_n}v_n)\bigr] \in L^2(\R\times\T^N).$$
Thus, $S_\varepsilon v_n\in\mathcal D_{L_n,e_n}$, and
$$ M_{c_n,L_n,e_n}(v_n-S_ \varepsilon v_n)=g_n-S_ \varepsilon g_n. $$
Since $c_n\to c>0$ and $L_n\to0$ as $n\to+\infty$, the uniform estimate in
Lemma \ref{solvability-M} gives, for all sufficiently large $n$,
\begin{equation}\label{S-convlution}
\|\partial_\xi(v_n-S_ \varepsilon v_n)\|_{L^2(\R\times\T^N)} \le C\|g_n-S_ \varepsilon g_n\|_{L^2(\R\times\T^N)}, 
\end{equation} 
where $C$ is independent of $n$ and $ \varepsilon$.

For each fixed $ \varepsilon>0$ and $n\in\N$, Young’s convolution inequality also yields
\begin{equation}\label{paritialxi-S} 
 \|\partial_\xi S_ \varepsilon(v_n-v_0)\|_{L^2(\R\times\T^N)} = \|\rho_ \varepsilon' *_\xi(v_n-v_0)\|_{L^2(\R\times\T^N)} 
\leq \|\rho_ \varepsilon'\|_{L^1(\mathbb R)} \|v_n-v_0\|_{L^2(\R\times\T^N)}. 
\end{equation}
Combining \eqref{S-convlution} and \eqref{paritialxi-S}, we obtain
\begin{equation*}
\begin{aligned} \|\partial_\xi v_n-v_0'\|_{L^2(\R\times\T^N)} & \leq  C\|g_n-S_ \varepsilon g_n\|_{L^2(\R\times\T^N)}+\|S_ \varepsilon v_0'-v_0'\|_{L^2(\R\times\T^N)}\\ 
	&\qquad\qquad \qquad\qquad\qquad\qquad +\|\rho_ \varepsilon'\|_{L^1(\mathbb R)} \|v_n-v_0\|_{L^2(\R\times\T^N)}. \end{aligned} 
\end{equation*}
Since $g_n\to g$ by assumption and $v_n\to v_0$ in $L^2(\R\times\T^N)$ by Step 2, letting $n\to+\infty$ with $\varepsilon>0$ fixed, yields
$$\limsup_{n\to+\infty} \|\partial_\xi v_n-v_0'\|_{L^2(\R\times\T^N)} \leq C\|g-S_ \varepsilon g\|_{L^2(\R\times\T^N)}+\|S_ \varepsilon v_0'-v_0'\|_{L^2(\R\times\T^N)}.  $$
Notice that both terms on the right-hand side tend to zero as $ \varepsilon\to0^+$. Consequently, we have
$$ \partial_\xi v_n\to v_0' \quad\text{strongly in }\,L^2(\R\times\T^N). $$
Together with the $L^2$-convergence from Step 2 and
$\|\nabla_yv_n\|_{L^2(\R\times\T^N)}\to0$ from \eqref{gradient-vn}, this proves $v_n \to v_0$ strongly in $H^1(\R \times \T^N)$. Thus, \eqref{homogenization process} is obtained.

Finally, to prove \eqref{limit-Ln0}, recall that
$v_0=M_{c,0,e}^{-1}(\bar g)\in H^2(\R)$.
A direct computation gives
$$
\begin{aligned}
	\varphi_n-M_{c_n,0,e_n}(v_0)
	={}&\varphi_n-\bar g
	+\bigl(A^{\rm hom}e\cdot e
	-A^{\rm hom}e_n\cdot e_n\bigr)v_0''\\
	&+\bigl(\langle B\rangle_A\cdot(e-e_n)
	+c-c_n\bigr)v_0'
	\to0
	\quad\hbox{in }L^2(\R).
\end{aligned}$$
Since $
M_{c_n,0,e_n}^{-1}(\varphi_n)-v_0= M_{c_n,0,e_n}^{-1}(\varphi_n-M_{c_n,0,e_n}v_0)$,
Lemma \ref{solvability-M} yields \eqref{limit-Ln0}.
The proof of Lemma \ref{Homogenization} is thus complete.
\end{proof}

The following is the counterpart of Lemma \ref{Homogenization} for the case when $L_n\to L\neq0$ as $n\to+\infty$.

\begin{lemma}\label{continuem2}
Fix $c>0$, $L\in \R^*$ and $e\in \mathbb{S}^{N-1}$. Then for any $g \in L^2(\R \times \T^N)$ and any sequences $\{g_n\}_{n\in\N}\subset L^2(\R\times\T^N)$, $\{c_n\}_{n\in\N}\subset (0, +\infty)$, $\{L_n\}_{n\in\N} \subset \R^*$ and $\{e_n\}_{n\in\N} \subset \mathbb{S}^{N-1}$ such that $\|g_n-g\|_{L^2(\R\times\T^N)}\to 0$, $c_n\to c$, $L_n\to L$ and $e_n\to e$ as $n\to+\infty$, we have
\begin{equation}\label{converge-L}
	M_{c_n,L_n,e_n}^{-1}(g_n) \to M_{c,L,e}^{-1}(g) \;
	\hbox{ in }\; H^{1}(\R \times \T^N) \;\hbox{ as }\; n\to+\infty.
\end{equation}
\end{lemma}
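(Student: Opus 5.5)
The plan is to reduce \eqref{converge-L} to the uniform estimates of Lemma \ref{solvability-M}, applied to the difference of the two solutions. Set $v_n=M_{c_n,L_n,e_n}^{-1}(g_n)$ and $v=M_{c,L,e}^{-1}(g)$. Since $L\neq0$, we may assume $|L_n|\le L_0:=2|L|$ and $c_n\ge \gamma:=c/2$ for large $n$. Lemma \ref{solvability-M} then gives a constant $C$ such that $\|M_{c_n,L_n,e_n}^{-1}(h)\|_{H^1}\le C\|h\|_{L^2}$ for all $h\in L^2(\R\times\T^N)$, uniformly in $n$.

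A direct difference $M_{c_n,L_n,e_n}(v_n-v)=g_n-M_{c_n,L_n,e_n}(v)$ is not immediately usable. The reason is that $v$ lies in $\Dcal_{L,e}$, and $\Dcal_{L,e}$ need not coincide with $\Dcal_{L_n,e_n}$. Moreover, the second-order terms $(\frac1{L_n}-\frac1L)$ of the operator only give an $H^{-1}$ right-hand side.

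I would therefore first mollify in $\xi$. Let $S_\varepsilon$ be the convolution from Step 3 of Lemma \ref{Homogenization}, and put $v^\varepsilon=S_\varepsilon v=M_{c,L,e}^{-1}(S_\varepsilon g)$. Then $v^\varepsilon$ has all $\xi$-derivatives in $H^1$. To control the mixed derivatives $\partial_\xi\nabla_y$ and $\nabla_y^2$ of $v^\varepsilon$ in $L^2$, I would use the $y$-elliptic structure at fixed $L\neq0$. Writing
$$\tfrac{1}{L^2}\nabla_y\cdot(A\nabla_y v^\varepsilon)=S_\varepsilon g-(\text{terms involving }\partial_\xi v^\varepsilon,\ \partial_\xi\nabla_y v^\varepsilon,\ \partial_\xi^2 v^\varepsilon,\ \nabla_y v^\varepsilon)$$
and using $A\in C^2$, standard interior regularity on the torus (applied for a.e. $\xi$, or to difference quotients in $y$) gives $\nabla_y^2 v^\varepsilon\in L^2$. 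This requires $\partial_\xi v^\varepsilon\in \Dcal_{L,e}$, which holds because $\partial_\xi$ commutes with $M_{c,L,e}$ and $\partial_\xi S_\varepsilon g\in L^2$. Hence $v^\varepsilon\in\Dcal_{L_n,e_n}$ for every $n$, and
$$M_{c_n,L_n,e_n}(v^\varepsilon)-S_\varepsilon g=\bigl(M_{c_n,L_n,e_n}-M_{c,L,e}\bigr)v^\varepsilon=:R_n^\varepsilon .$$
The difference operator has coefficients $(\frac1{L_n^2}-\frac1{L^2})$, $(\frac1{L_n}-\frac1L)$, $e_n-e$ and $c_n-c$, each multiplying fixed $L^2$ functions built from derivatives of $v^\varepsilon$ up to order two. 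So $\|R_n^\varepsilon\|_{L^2}\to0$ as $n\to\infty$ for each fixed $\varepsilon$.

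Then
$$v_n-v^\varepsilon=M_{c_n,L_n,e_n}^{-1}\bigl(g_n-S_\varepsilon g-R_n^\varepsilon\bigr),$$
so that
$$\|v_n-v\|_{H^1}\le C\bigl(\|g_n-g\|_{L^2}+\|g-S_\varepsilon g\|_{L^2}+\|R_n^\varepsilon\|_{L^2}\bigr)+\|v-S_\varepsilon v\|_{H^1}.$$
Letting $n\to\infty$ and then $\varepsilon\to0^+$ gives \eqref{converge-L}.

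The main obstacle is the regularity step: showing that $S_\varepsilon v$ has second $y$-derivatives in $L^2$, so that the perturbation $R_n^\varepsilon$ is genuinely in $L^2$. If that proves delicate, the fallback is a weak-compactness argument in the spirit of Lemma \ref{Homogenization}. One extracts a weak $H^1$ limit of $v_n$ (bounded by Lemma \ref{solvability-M}) and passes to the limit in the weak formulation; this is legitimate since $L_n\to L\neq0$ keeps all coefficients convergent. Uniqueness from Lemma \ref{solvability-M} then identifies the limit as $v$. Strong convergence follows from the energy identity: test $M_{c_n,L_n,e_n}(v_n-v)$ against $v_n-v$, using smooth approximations of $v$ to handle the $H^{-1}$ terms, and apply the coercivity estimate \eqref{key-inequality} together with \eqref{energy 2}.
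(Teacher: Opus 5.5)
Your main argument is correct, but it takes a different route from the paper. Your fallback is essentially the paper's own proof.

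\textbf{The paper's route.} It extracts a weak $H^1$ limit of $w_n=M_{c_n,L_n,e_n}^{-1}(g_n)$ and identifies it with $w=M_{c,L,e}^{-1}(g)$ by uniqueness. It then tests $M_{c_n,L_n,e_n}(w_n-w)$ against $w_n-w$ in the $H^{-1}$--$H^1$ pairing, integrating the term $\tilde\nabla_{L_n,e_n}\cdot(A\tilde\nabla_{L_n,e_n}w)$ by parts. This sidesteps the $H^{-1}$ right-hand side you flagged. However, coercivity only gives $w_n\to w$ in $L^2$ and $\tilde\nabla_{L_n,e_n}(w_n-w)\to0$. So the paper uses the $\xi$-mollification of Step 3 of Lemma \ref{Homogenization} at the end to get $\partial_\xi w_n\to\partial_\xi w$. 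It then recovers the $y$-gradient from $\nabla_y=L_n(\tilde\nabla_{L_n,e_n}-e_n\partial_\xi)$.

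\textbf{Your route.} You mollify at the start instead. Then $S_\varepsilon v$ lies in every $\mathcal D_{L_n,e_n}$, with an $L^2$ residual of size $O_\varepsilon(|L_n-L|+|e_n-e|+|c_n-c|)$. The full $H^1$ bound of Lemma \ref{solvability-M} then controls all components at once. You need no compactness, no identification of the limit, and you get a quantitative estimate for each fixed $\varepsilon$.

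\textbf{The regularity step is lighter than you fear.} The pure $y$-part of every $M_{c_n,L_n,e_n}$ is $L_n^{-2}\nabla_y\cdot(A\nabla_y\,\cdot)$, which does not depend on $e_n$. So it enters $R_n^\varepsilon$, and the check that $v^\varepsilon\in\mathcal D_{L_n,e_n}$, only through multiples of $\nabla_y\cdot(A\nabla_y v^\varepsilon)$. That function is already in $L^2$: solve the equation for it once you know $\partial_\xi v^\varepsilon=M_{c,L,e}^{-1}(\rho_\varepsilon'*_\xi g)\in\mathcal D_{L,e}$. This gives $\partial_\xi^2 v^\varepsilon$, $\partial_\xi\nabla_y v^\varepsilon\in L^2$, and the remaining terms are then in $L^2$ as well. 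Hence no $H^2$ elliptic regularity in $y$ is needed, although invoking it (with $A\in C^2$) is also fine.
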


\begin{proof}
Let $w_n=M_{c_n,L_n,e_n}^{-1}(g_n)$ and $w=M_{c,L,e}^{-1}(g)$. Since $c_n\to c>0$ and $L_n\to L\neq0$, Lemma \ref{solvability-M} implies that $\{w_n\}_{n\in\N}$ is bounded in $H^1(\R\times\T^N)$. For any weakly convergent subsequence with limit $u$, passing to the limit in the weak equation gives $M_{c,L,e}u=g$ in $H^{-1}(\R\times\T^N)$. Moreover, since $\tilde\nabla_{L,e}\cdot(A\tilde\nabla_{L,e}u) = g-B\cdot\tilde\nabla_{L,e}u -c\partial_\xi u+\beta u \in L^2(\R\times\T^N)$, we have $u\in\mathcal D_{L,e}$ and, by uniqueness, $u=w$. Consequently, the entire sequence satisfies $w_n\to w$ weakly in $H^1(\R\times\T^N)$. 
It is straightforward to check that
\begin{equation*}
\begin{aligned}
M_{c_n,L_n,e_n}(w_n-w)&=-M_{c_n,L_n,e_n}(w)+M_{c,L,e}(w)+g_n-g\\
&=\left(\tilde{\nabla}_{L,e} \cdot (A \tilde{\nabla}_{L,e} w)- \tilde{\nabla}_{L_n,e_n} \cdot (A \tilde{\nabla}_{L_n,e_n} w)\right)+\left(B \cdot \tilde{\nabla}_{L,e} w - B \cdot \tilde{\nabla}_{L_n,e_n} w \right)\\
&\qquad +(c-c_n)\partial_{\xi}w+(g_n-g).
\end{aligned}
\end{equation*}
Testing this identity with $-(w_n-w)$ and integrating
the second diffusion term by parts gives
$$-\left(M_{c_n,L_n,e_n}(w_n-w),w_n-w\right)_{(H^{-1};H^{1})}=\int_{\R \times \T^N}\left(P_n^1+P_n^2+P_n^3\right), $$
where $(\cdot,\cdot)_{(H^{-1};H^1)}$ denotes the duality pairing
between $H^{-1}(\R\times\T^N)$ and $H^1(\R\times\T^N)$, and
\begin{equation*}
\left\{\begin{aligned}
P_n^1&=- (w_n-w)\tilde{\nabla}_{L,e} \cdot (A \tilde{\nabla}_{L,e} w)-(A\tilde{\nabla}_{L_n,e_n}w)\cdot \tilde{\nabla}_{L_n,e_n} (w_n-w),  \vspace{5pt}\\
P_n^2&=\bigl(B\cdot\tilde\nabla_{L_n,e_n}w
-B\cdot\tilde\nabla_{L,e}w\bigr)(w_n-w),\vspace{5pt}\\
P_n^3 &= (c_n-c)\partial_{\xi}w(w_n-w) +(g-g_n)(w_n-w). 
	\end{aligned}\right.
\end{equation*}
Notice that
$\tilde\nabla_{L,e}\cdot(A\tilde\nabla_{L,e}w)
\in L^2(\R\times\T^N)$ and, since $L_n\to L\neq0$ and
$e_n\to e$, we have
$$
\tilde\nabla_{L_n,e_n}w\to\tilde\nabla_{L,e}w
\quad\hbox{strongly in }\,L^2(\R\times\T^N),$$
and
$$
\tilde\nabla_{L_n,e_n}(w_n-w)\rightharpoonup0
\quad\hbox{weakly in }\,L^2(\R\times\T^N).$$
These convergences, together with $w_n\rightharpoonup w$
in $H^1(\R\times\T^N)$, $c_n\to c$ and
$g_n\to g$ in $L^2(\R\times\T^N)$, imply that
$$\int_{\R \times \T^N} \left(P_n^1+P_n^2+P_n^3 \right)\to 0 \quad\hbox{as }\; n\to+\infty.$$
On the other hand, employing an estimate analogous to \eqref{key-inequality}, we obtain that for every $n\in\N$, 
\begin{equation*}
-\left(M_{c_n,L_n,e_n}(w_n-w),w_n-w\right)_{(H^{-1};H^{1})} \geq \frac{1}{2}\int_{\R \times \T^N} \eta_0|\tilde{\nabla}_{L_n,e_n}(w_n-w)|^2 + \beta (w_n-w)^2.
\end{equation*}
As a consequence, $\|w_n - w\|_{L^2(\R\times\T^N)} \to 0$ and
\begin{equation}\label{converge-wn-w}
  \|\tilde{\nabla}_{L_n,e_n}(w_n-w)\|_{L^2(\R\times\T^N)} \to 0 \;\hbox{ as }\;n\to+\infty.
\end{equation}

Furthermore, since the coefficients of $M_{c_n,L_n,e_n}$ are independent of $\xi$, the mollification
argument in Step 3 of the proof of Lemma \ref{Homogenization}
applies here with $v_n$ and $v_0$ replaced by $w_n$ and $w$.
Using $g_n\to g$ in $L^2(\R\times\T^N)$, the global
$L^2$-convergence above and the uniform estimate in
Lemma \ref{solvability-M}, we obtain
\begin{equation}\label{converge-wn-xi}
	\partial_\xi w_n\to\partial_\xi w
	\quad\hbox{strongly in }L^2(\R\times\T^N).
\end{equation}

Finally, notice that
\begin{equation*}
	\begin{aligned}
\int_{\R\times\T^N}|\nabla_y(w_n-w)|^2&=L_n^2\int_{\R\times\T^N} |\tilde{\nabla}_{L_n,e_n}(w_n-w)-e_n\partial_{\xi}(w_n-w)|^2 \vspace{5pt}\\
&\leq 2L_n^2   \int_{\R\times\T^N} |\tilde{\nabla}_{L_n,e_n}(w_n-w)|^2+|\partial_{\xi}(w_n-w)|^2	\mathop{\longrightarrow}_{n\to+\infty} 0,
	\end{aligned}
\end{equation*}
where the convergence follows directly from \eqref{converge-wn-w}, \eqref{converge-wn-xi} and the boundedness of $\{L_n\}$. Together with the $L^2$-convergence, this proves that
$w_n\to w$ strongly in $H^1(\R\times\T^N)$.
\end{proof}

\SE{Existence of pulsating waves: Proof of Theorem \ref{existence small period}}

In this section, we prove that, for all sufficiently small $L>0$,
problem \eqref{front equation} admits a solution
$(\phi_{L,e},c_{L,e})$ satisfying the limits at infinity \eqref{limit-condition} by applying the implicit function theorem.  
The proof also gives that these solutions converge to the homogeneous wave
$(\phi_{0,e},c_{0,e})$ of \eqref{homo-wave} as $L\to0^+$,
up to translation of the limiting profile.
To fix the translation invariance of $\phi_{0,e}$, we impose
the normalization
\begin{equation}\label{normalization0}
	\phi_{0,e}(0)=\frac12
	\quad\hbox{for all }e\in\mathbb S^{N-1}.
\end{equation}


\subsection{Basic properties of the homogeneous wave $(\phi_{0,e},c_{0,e})$}

We first collect several basic properties of the homogeneous wave $(\phi_{0,e},c_{0,e})$, including its dependence on the propagation direction and the spectral properties of the associated linearized operator.

\begin{lemma}\label{continuity-homo-wave}
For each $e\in\mathbb S^{N-1}$, the functions $\phi_{0,e}(\xi)$ and $1-\phi_{0,e}(\xi)$ decay exponentially as $\xi\to +\infty$ and $\xi\to -\infty$, respectively. Moreover, the normalized homogeneous wave $(\phi_{0,e}(\cdot),c_{0,e})$ depends continuously on $e$ in the sense that for every sequence $\{e_n\}_{n\in\N}\subset \mathbb{S}^{N-1}$ such that $e_n\to e$, we have 
$$c_{0,e_n}\to c_{0,e}, \quad   \phi_{0,e_n}\to \phi_{0,e}\,\hbox{ in }\,L^{\infty}(\R),$$ and 
$$\phi^{(k)}_{0,e_n}\to \phi^{(k)}_{0,e}\,\hbox{ in }\, L^2(\R)\cap L^{\infty}(\R)\,\hbox{ for all }\,1\leq k\leq  5.$$ 
\end{lemma}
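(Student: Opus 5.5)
The plan is to reduce everything to a single isotropic wave by rescaling, as the remark after (H1) suggests. Set $a_e:=A^{\rm hom}e\cdot e\geq \eta_1>0$ (uniform ellipticity of $A^{\rm hom}$) and $b_e:=\langle B\rangle_A\cdot e$; both depend continuously (indeed polynomially) on $e$. Let $(\Phi,c^*)$ be the unique (up to shifts) solution of $\Phi''+c^*\Phi'+\bar f(\Phi)=0$, $\Phi(-\infty)=1$, $\Phi(+\infty)=0$, normalized by $\Phi(0)=1/2$. Its existence follows from (H1) applied to any fixed direction $e$ with the change of variables below, and its uniqueness from the classical results cited after \eqref{homo-wave}. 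Substituting $\phi_{0,e}(\xi)=\Phi(\xi/\sqrt{a_e})$ into \eqref{homo-wave}, we find that it solves the equation exactly when $(b_e+c_{0,e})/\sqrt{a_e}=c^*$. Uniqueness of speed and profile, together with \eqref{normalization0}, then gives the explicit formulas
\begin{equation*}
c_{0,e}=c^*\sqrt{a_e}-b_e,\qquad \phi_{0,e}(\xi)=\Phi\big(\xi/\sqrt{a_e}\big).
\end{equation*}

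Exponential decay is a statement about $\Phi$ alone. Since $\bar f'(0)\le-\gamma_0<0$, linearizing at $0$ gives a hyperbolic equilibrium of the planar system $(\Phi,\Phi')$, and $\Phi(\xi)\to0$ along its stable manifold. Hence $\Phi$ and its derivatives decay like $e^{-\mu\xi}$, where $\mu>0$ is the positive root of $\mu^2-c^*\mu+\bar f'(0)=0$. The same argument at $1$ handles $1-\Phi$ as $\xi\to-\infty$. For the derivatives up to order $5$, differentiate the ODE: since $\bar f\in C^3$, the equation gives $\Phi''$ in terms of $\Phi',\Phi$, and repeated differentiation expresses $\Phi^{(k)}$ for $k\le5$ through $\bar f^{(j)}(\Phi)$ with $j\le3$ times products of lower derivatives. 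Each such term contains at least one factor $\Phi'$ or a higher derivative, so all $\Phi^{(k)}$, $1\le k\le5$, decay exponentially at both ends and lie in $L^2\cap L^\infty$. The exponential decay of $\phi_{0,e}$ follows immediately by rescaling.

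Continuity then reduces to continuity in the scaling parameter. $c_{0,e_n}\to c_{0,e}$ is immediate from the formula. For the profile, put $s_n=a_{e_n}^{-1/2}\to s=a_e^{-1/2}$. Then $\phi^{(k)}_{0,e_n}(\xi)=s_n^k\Phi^{(k)}(s_n\xi)$, and it suffices to show $\Phi^{(k)}(s_n\cdot)\to\Phi^{(k)}(s\cdot)$ in $L^\infty$ and $L^2$, for $k=0$ in $L^\infty$ and for $1\le k\le5$ in both. This is the only step requiring care, because the dilation group is not norm-continuous on arbitrary functions. Here it is fine, since $\Phi^{(k)}$ is bounded and uniformly continuous with exponential tails ($\Phi^{(6)}$ may not exist, so use uniform continuity rather than a derivative bound).

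For the $L^\infty$ estimate, split into $|\xi|\le R$ and $|\xi|>R$. On $|\xi|\le R$ use uniform continuity of $\Phi^{(k)}$. On $|\xi|>R$, when $k\ge1$ both terms are small by decay, since $s_n\ge s/2$ for large $n$. When $k=0$, both terms are close to the same limit, $0$ or $1$. For $L^2$ with $k\ge1$, the pointwise convergence plus the uniform dominating function $Ce^{-\mu s|\xi|/2}$ gives convergence by dominated convergence.
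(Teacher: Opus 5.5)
Your proposal is correct and follows essentially the same route as the paper: reduce to the normalized isotropic wave via $\phi_{0,e}(\xi)=\tilde\phi(\xi/\sqrt{A^{\rm hom}e\cdot e})$ and $c_{0,e}=\tilde c\sqrt{A^{\rm hom}e\cdot e}-\langle B\rangle_A\cdot e$, obtain exponential decay of the derivatives from the ODE, and then get continuity in $e$ from locally uniform convergence with uniform exponential tails (for $L^\infty$) and dominated convergence (for $L^2$). One small imprecision: not every term in the differentiated equations carries a factor $\Phi'$ or higher, since the term $\bar f(\Phi)$ in $\Phi''=-c^*\Phi'-\bar f(\Phi)$ does not; it still decays exponentially because $\bar f(0)=\bar f(1)=0$ and $\bar f$ is Lipschitz.
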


\begin{proof}
We express $(\phi_{0,e},c_{0,e})$ in terms of a normalized traveling wave of an isotropic equation. More precisely,
under {\rm (H1)}, let $(\tilde\phi,\tilde c)$ denote
the unique traveling wave satisfying
	$$	\tilde{\phi}'' + \tilde{c}\,\tilde{\phi}' + \bar{f}(\tilde{\phi}) = 0 \; \text{ in }\; \R, \quad 
\tilde{\phi}(-\infty)=1,\quad \tilde{\phi}(+\infty)=0,\quad\hbox{and}\quad \tilde{\phi}(0)=\frac{1}{2}. $$
Clearly, the pair $(\tilde\phi,\tilde c)$ is independent of $e$.
Comparing this equation with \eqref{homo-wave} and using the normalization \eqref{normalization0},
we obtain
\begin{equation}\label{scaling-relation}
	\phi_{0,e}(\xi)= \tilde{\phi}\!\left( \frac{\xi}{\sqrt{A^{\rm hom}e\cdot e}}\right) \;\hbox{ for } \xi\in\mathbb R,
	\quad\text{and}\quad
	c_{0,e}=\tilde{c}\sqrt{A^{\rm hom}e\cdot e}-\langle B\rangle_A\cdot e.
\end{equation}
Moreover, since $\bar{f}'(0)<0$ and $\bar{f}'(1)<0$, it is well known (see e.g. \cite{aw,fm}) that $\tilde{\phi}(\xi)$ and $1-\tilde{\phi}(\xi)$ decay exponentially as $\xi\to+\infty$ and $\xi\to-\infty$, respectively. 
Since $\bar f\in C^3([0,1])$, the traveling-wave equation implies that $\tilde\phi\in C^5(\R)$.
Differentiating the equation further shows that
$\tilde\phi^{(k)}(\xi)$ decays exponentially as $\xi\to\pm \infty$ for $1\le k\le4$.

By the positivity of the effective diffusion and
the compactness of $\mathbb S^{N-1}$, there exist
constants $a_*,a^*>0$ such that
$a_*\le A^{\rm hom}e\cdot e\le a^*$ for all $e\in\mathbb S^{N-1}$.
Thus, \eqref{scaling-relation} yields positive constants
$C>0$ and $\mu>0$, independent of $e$, such that
$$	\phi_{0,e}(\xi)\leq  Ce^{-\mu\xi} \,\hbox{ for }\, \xi\geq 0, \quad 	1-\phi_{0,e}(\xi) \leq  Ce^{\mu\xi}\, \hbox{ for }\, \xi\leq 0,$$
and 
 $$|\phi_{0,e}^{(k)}(\xi)|\le Ce^{-\mu|\xi|} \,\hbox{ for }\,\xi\in\R,\, 1\leq k\leq 5. $$
In particular, we have
$\sup_{e\in\mathbb S^{N-1}}
\|\phi_{0,e}'\|_{H^4(\R)}<+\infty$. 

Now, for every sequence $\{e_n\}_{n\in\N}\subset \mathbb{S}^{N-1}$ such that $e_n\to e$,
the convergence $c_{0,e_n}\to c_{0,e}$ follows immediately from \eqref{scaling-relation}.
For the profiles, the same relation gives
$$\phi_{0,e}^{(k)}(\xi)=
(A^{\rm hom}e\cdot e)^{-k/2}
\tilde\phi^{(k)}\!\left(\frac{\xi}{\sqrt{A^{\rm hom}e\cdot e}}
\right),\qquad 0\le k\leq 5.$$
Consequently,
$\phi_{0,e_n}^{(k)}\to\phi_{0,e}^{(k)}$ uniformly on every compact interval as $n\to+\infty$. 
Together with the uniform exponential estimates above, this implies the convergence in $L^\infty(\R)$ for $0\leq k\leq 5$.
For each $1\le k\leq 5$, the same estimates and the dominated convergence theorem also give the convergence in $L^2(\R)$.
This completes the proof of Lemma \ref{continuity-homo-wave}. 
\end{proof}

Next, we fix $e\in \mathbb{S}^{N-1}$, and consider the linearization of equation \eqref{homo-wave} at $\phi_{0,e}$.   
Define
\begin{equation}\label{define-He}
	H_e: H^{2}(\R) \to L^2(\R),
	\quad 
	u \mapsto (A^{{\rm hom}} e \cdot e) u'' + (\langle B \rangle_{A}\cdot e + c_{0,e})u' + \bar{f}'(\phi_{0,e})u,
\end{equation}
and the adjoint operator 
$$H_e^*: H^{2}(\R) \to L^2(\R),\quad v\mapsto (A^{{\rm hom}} e \cdot e) v'' - (\langle B \rangle_A \cdot e + c_{0,e})v' + \bar{f}'(\phi_{0,e})v,$$
in such a way that
$(H^*_e(v),u)_{L^2(\R)}=(v,H_e(u))_{L^2(\R)}$
for all $u,v \in H^{2}(\R)$.
By the definition of $M_{c,0,e}$ in \eqref{defi-M0}, it is easily seen that, when $c_{0,e}>0$,  
\begin{equation}\label{collection-M-H}
	H_e(v)=M_{c_{0,e},0,e}(v)+(\bar{f}'(\phi_{0,e})+\beta)  v \quad\hbox{for } \;v\in H^{2}(\R).
\end{equation}
In the following lemma, we collect the spectral properties of $H_e$ and $H^*_e$.  

\begin{lemma}\label{properties of H}
 $H_e$ and $H_e^*$ have an algebraically simple eigenvalue $0$, and ${\rm Ker} (H_e)=\R \phi'_{0,e}$, ${\rm Ker} (H^*_e)=\R w_e$, where $w_e$ is given by \eqref{adjoint-kernel}, with the dependence on $e$ made explicit.
  Moreover, the range of $H_e$ is closed in $L^2(\R)$.
\end{lemma}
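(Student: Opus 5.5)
We first identify the kernels, using the abbreviations $a:=A^{\rm hom}e\cdot e>0$ and $b:=\langle B\rangle_A\cdot e+c_{0,e}$. Differentiating the profile equation \eqref{homo-wave} gives $H_e(\phi_{0,e}')=0$. By Lemma \ref{continuity-homo-wave}, $\phi'_{0,e}$ and its derivatives decay exponentially, so $\phi_{0,e}'\in H^2(\R)$. For the reverse inclusion, $H_e u=0$ is a second-order linear ODE, so its solution space is two-dimensional. By Abel's formula, the Wronskian of $\phi_{0,e}'$ with a second solution $u_2$ is $W(\xi)=W_0\me^{-b\xi/a}$. As $\xi\to\pm\infty$ the coefficient $\bar f'(\phi_{0,e})$ tends to $\bar f'(1)$ or $\bar f'(0)$, both $\le-\gamma_0<0$. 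Hence the limiting constant-coefficient equations have characteristic roots of opposite signs at each end (hyperbolic saddles), and the solutions decaying at $+\infty$, respectively at $-\infty$, form one-dimensional subspaces. Since $\phi'_{0,e}$ decays at both ends, any second independent solution must grow exponentially at one end, so it is not in $L^2$. Thus ${\rm Ker}(H_e)=\R\phi_{0,e}'$.

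For the adjoint, write $w_e=\phi_{0,e}'\me^{b\xi/a}$ and check directly that $H_e^*w_e=0$. This is a routine conjugation: $H_e^*(\me^{b\xi/a}u)=\me^{b\xi/a}H_e(u)$. To see that $w_e$ lies in $H^2(\R)$, we use the sharp exponential rates. As $\xi\to+\infty$, $\phi_{0,e}'$ decays like $\me^{\lambda_-\xi}$, where $\lambda_-<0$ is the negative root of $a\lambda^2+b\lambda+\bar f'(0)=0$. Then $\lambda_-+b/a=-\lambda_+<0$, where $\lambda_+>0$ is the other root, because the two roots sum to $-b/a$. At $-\infty$ the same computation applies with $\bar f'(1)$. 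Hence $w_e$ decays exponentially at both ends. The same saddle argument applied to $H_e^*$ then gives ${\rm Ker}(H_e^*)=\R w_e$.

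For the closed range, we show $H_e$ is Fredholm of index zero. Write $H_e=H_e^\infty+K$, where $H_e^\infty u=au''+bu'+q(\xi)u$ with $q$ a smooth step function equal to $\bar f'(1)$ near $-\infty$ and $\bar f'(0)$ near $+\infty$. The remainder $K$ is multiplication by $\bar f'(\phi_{0,e})-q$, which decays at infinity, so $K:H^2\to L^2$ is compact by Rellich on bounded intervals plus the decay of the coefficient. The operator $H_e^\infty$ is invertible. Indeed, its limiting operators $a\partial^2+b\partial+\bar f'(0\hbox{ or }1)$ have Fourier symbols $-a\tau^2+ib\tau+\bar f'<0$ in real part, so they are invertible. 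A standard patching argument (exponential dichotomies, or a partition of unity combined with the energy estimate in the spirit of Lemma \ref{solvability-M}) then gives invertibility of $H_e^\infty$. Alternatively, one can note that $H_e^\infty+\bar f'$-type shifts preserve negativity and argue directly by energy estimates. Consequently $H_e$ is Fredholm of index $0$, and in particular its range is closed, with ${\rm Ran}(H_e)=({\rm Ker}H_e^*)^\perp=w_e^\perp$.

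For algebraic simplicity, suppose $H_e u=\phi_{0,e}'$. By the Fredholm alternative this requires $\int\phi'_{0,e}w_e\,d\xi=0$. But $\phi'_{0,e}<0$ (the profile is decreasing) and $w_e=\phi'_{0,e}\me^{b\xi/a}$, so $\phi'_{0,e}w_e=(\phi'_{0,e})^2\me^{b\xi/a}>0$, and the integral is strictly positive. This contradiction shows that no generalized eigenvector exists. The symmetric argument, solving $H_e^*v=w_e$, requires $\int w_e\phi'_{0,e}=0$ and fails in the same way. The main obstacle is the Fredholm/closed-range step: handling the asymptotically constant coefficients carefully. I would cite the standard exponential dichotomy theory (Palmer, Henry) rather than reproving it.
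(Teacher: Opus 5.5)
Your proof is correct, but it takes a different route from the paper. The paper never argues in the original variables. It uses \eqref{scaling-relation}, which gives $\langle B\rangle_A\cdot e+c_{0,e}=\tilde c\sqrt{A^{\rm hom}e\cdot e}$. After the change of variable $\xi=\sqrt{A^{\rm hom}e\cdot e}\,s$, the operators $H_e$ and $H_e^*$ become a single $e$-independent pair, $\partial_s^2\pm\tilde c\,\partial_s+\bar f'(\tilde\phi(s))$. The paper then cites Heinze's Section 4 for all the spectral facts.

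You instead prove everything directly for arbitrary $a>0$ and $b\in\R$:
\begin{itemize}
\item the kernel, from the saddle structure of the limiting equations at $\pm\infty$;
\item the adjoint kernel, from the conjugation $H_e^*(\me^{b\xi/a}u)=\me^{b\xi/a}H_e u$ together with the sharp rates $\lambda_\pm$, which is exactly what puts $w_e$ in $H^2(\R)$;
\item closed range, by writing $H_e$ as a compact perturbation of an asymptotic operator;
\item algebraic simplicity, from $\int_\R\phi_{0,e}'w_e>0$.
\end{itemize}

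The two routes buy different things. The paper's reduction is short, and it makes uniformity in $e$ and the form of $w_e$ transparent, since $w_e$ is just the rescaled $\tilde\phi'(s)\me^{\tilde c s}$. Your argument is self-contained and does not depend on the explicit scaling relation. It also spells out the facts that are used later: ${\rm Ran}(H_e)=w_e^\perp$ and $\int_\R\phi_0'w\neq0$, invoked in Lemma \ref{solvable-varphi} and in the Fredholm argument for $Q$ in Lemma \ref{continuity-operotor}.

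One step should be tightened. A ``patching'' or dichotomy argument by itself gives only Fredholmness of $H_e^\infty$, not invertibility. The clean fix is to choose $q$ monotone between $\bar f'(1)$ and $\bar f'(0)$, so that $q\le-\gamma_0$ everywhere. Then
\[
-\int_\R (H_e^\infty u)u=a\|u'\|_{L^2}^2-\int_\R qu^2\ge\gamma_0\|u\|_{L^2}^2,
\]
because the drift term integrates to zero when $b$ is constant. The same estimate holds for the adjoint $av''-bv'+qv$. Together these give invertibility of $H_e^\infty:H^2(\R)\to L^2(\R)$, and hence index zero for $H_e$.
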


\begin{proof}
By \eqref{scaling-relation}, we have
$\langle B\rangle_A\cdot e+c_{0,e}=\tilde c\sqrt{A^{\rm hom}e\cdot e}$.
Hence, under the change of variable
$\xi=\sqrt{A^{\rm hom}e\cdot e}\,s$, the operators $H_e$ and $H_e^*$ reduce to
$$\frac{d^2}{ds^2}+\tilde c\frac{d}{ds}+\bar f'(\tilde\phi(s))
\quad\hbox{and}\quad \frac{d^2}{ds^2}-\tilde c\frac{d}{ds}+\bar f'(\tilde\phi(s)),$$
respectively. Then, the arguments in \cite[Section 4]{he}
yield all the stated spectral properties. We do not repeat the details here. 
\end{proof}

\subsection{Application of the implicit function theorem}

Combining these properties with the preliminary results established in Section 2, we are now ready to prove Theorem \ref{existence small period}. We begin by introducing the framework needed for the implicit function theorem.
Recall that $\chi:\T^N\to\R^N$ is the unique solution of the cell problem \eqref{Acorrector}.
For any $v \in H^{ 1}(\R \times \T^N)$, $c>0$, $e\in\mathbb{S}^{N-1}$, and $L\in \R$, define
\begin{equation*}
\begin{aligned}
K(v,c,L,e) =\;	& (A(I_N+ \nabla \chi) e \cdot e + \nabla \cdot (A (\chi \cdot e)e))\phi_{0,e}'' 
	+ L(A e \cdot e) (\chi \cdot e)\phi_{0,e}^{(3)}  \vspace{5pt}\\
	&\quad + B \cdot (I_N+ \nabla \chi)e \phi_{0,e}' + L (B \cdot e) (\chi \cdot e)\phi_{0,e}''
	+ c(\phi_{0,e}' + L(\chi \cdot e)\phi_{0,e}'') \vspace{5pt}\\
		&\quad 	 + f(y,\phi_{0,e} + L(\chi \cdot e)\phi_{0,e}' + v(\xi,y))+ \beta v(\xi,y),
	\end{aligned}
\end{equation*}
where $A$, $B$, $\chi$, and $\nabla\chi$ are evaluated at $y$, while $\phi_{0,e}$ and its derivatives are evaluated at $\xi$. For $L\neq 0$, a direct computation yields 
\begin{equation}\label{raltion-K-M0}
K(v,c,L,e)=M_{c,L,e}(\phi_{0,e}+L(\chi \cdot e )\phi'_{0,e})+f(y,\phi_{0,e} + L(\chi \cdot e)\phi_{0,e}' + v)+\beta (\phi_{0,e} + L(\chi \cdot e)\phi_{0,e}' + v).
\end{equation}

We claim that  $K(v,c,L,e)\in L^{2}(\R \times \T^N)$. Indeed,
by the exponential decay of $\phi_{0,e}(\xi)$ and $1-\phi_{0,e}(\xi)$ as $\xi\to \pm \infty$ (see Lemma \ref{continuity-homo-wave}), we have $\phi_{0,e}\in L^2(\R^+)$, $1-\phi_{0,e}\in L^2(\R^-)$, and $\phi_{0,e}^{(k)}\in L^2(\R)$ for $1\leq k\leq 3$. Hence, all the linear terms in the definition of $K$ belong to $L^{2}(\R \times \T^N)$. Moreover, since 
$\partial_u f(y,u)$ is bounded in $(y,u)\in\T^N\times\R$ and $f(y,0)=0$, we have 
\begin{equation*}
	\begin{aligned}
		&\int_{\R^+\times\T^N}f^2(y,\phi_{0,e} + L(\chi \cdot e)\phi_{0,e}'+v) \\
		= \;&\int_{\R^+\times\T^N} \left(f(y,\phi_{0,e} + L(\chi \cdot e)\phi_{0,e}'+v)-f(y,0)\right)^2 \vspace{5pt}\\ 
		\leq\; &  C_L\left( \|\phi_{0,e}\|^2_{L^2(\R^+)} + \|\phi_{0,e}'\|^2_{L^2(\R\times\T^N)}+\|v\|^2_{L^2(\R\times\T^N)}\right) <\infty,
	\end{aligned}
\end{equation*}
where $C_L$ is a positive constant depending on $L$. Similarly, using $f(y,1)=0$, we obtain  
$\int_{\R^-\times\T^N}f^2(y,\phi_{0,e} + L(\chi \cdot e)\phi_{0,e}'+v)<+\infty$.  
Therefore, 
\begin{equation}\label{f-L2}
\int_{\R\times\T^N}f^2(y,\phi_{0,e} + L(\chi \cdot e)\phi_{0,e}'+v)<+\infty,
\end{equation} and combining the above, we have
 $K(v,c,L,e)\in L^{2}(\R \times \T^N)$.

Now, for any $v \in H^{1}(\R \times \T^N)$, $c>0$, $e\in\mathbb{S}^{N-1}$ and $L \in \R$, define
\begin{equation*}
G(v,c,L,e) = (G_1, G_2)(v,c,L,e),
\end{equation*}
where
\begin{equation}\label{defi-G}
\begin{cases}
G_1(v,c,L,e) = 
\begin{cases}
	v + M_{c,L,e}^{-1}(K(v,c,L,e)) \quad 
&\hbox{ if }\; L \neq 0, \vspace{5pt}\\
v + M_{c,0,e}^{-1}(\overline{K(v,c,0,e)}) \quad 
&\hbox{ if }\; L = 0,
\end{cases} \vspace{7pt}\\
G_2(v,c,L,e) = 
	\displaystyle \int_{\R^+ \times \T^N} 
	\left(\phi_{0,e} 
	+ L (\chi\cdot e) \phi_{0,e}'
	+ v\right)^2 
	- \int_{\R^+} \phi_{0,e}^2,
\end{cases}
\end{equation}
Here, the function $\overline{K(v,c,0,e)}\in L^2(\R)$ denotes the mean of $K(v,c,0,e)$ over $\T^N$, 
as in \eqref{mean definition}.
By the invertibility and energy estimates established for $M_{c,L,e}$ in Lemma \ref{solvability-M}, 
the function $G$ is therefore well defined, and maps 
$H^{ 1}(\R \times \T^N) \times (0, +\infty) \times \R\times \mathbb{S}^{N-1}$ into $H^{ 1}(\R \times \T^N) \times \R$.

In the following sequence of lemmas, we prove several properties of $G$ needed to apply the implicit function theorem.
We begin with the following identity for the effective drift $\langle B \rangle_{A}$.

\begin{lemma}\label{lem-mean-B}
	Let $\langle B \rangle_{A}$ be the effective drift defined in \eqref{homogenized limit w.r.t. A}. Then for any $e\in\mathbb{S}^{N-1}$, we have 
	$$	\langle B \rangle_{A}\cdot e=  \int_{\T^N}  B\cdot (e + \nabla (\chi \cdot e)).$$
\end{lemma}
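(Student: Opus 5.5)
The plan is a duality argument between the two cell problems \eqref{Acorrector} and \eqref{Bcorrector}, using only integration by parts on the torus. Write $\chi_1=\chi\cdot e$. By \eqref{homogenized limit w.r.t. A},
$$\langle B\rangle_A\cdot e=\int_{\T^N}B\cdot e\,dy-\int_{\T^N}A^T\nabla\zeta\cdot e\,dy ,$$
so the claim is equivalent to the identity
$$-\int_{\T^N}A^T\nabla\zeta\cdot e\,dy=\int_{\T^N}B\cdot\nabla\chi_1\,dy .$$

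First I test the corrector equation \eqref{Bcorrector}, $\nabla\cdot(A^T\nabla\zeta-B)=0$, against the periodic function $\chi_1\in C^2(\T^N)$. Integrating by parts on $\T^N$, where there are no boundary terms by periodicity, gives
$$\int_{\T^N}(A^T\nabla\zeta-B)\cdot\nabla\chi_1\,dy=0,$$
that is, $\int B\cdot\nabla\chi_1=\int A^T\nabla\zeta\cdot\nabla\chi_1=\int\nabla\zeta\cdot A\nabla\chi_1$.

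Next I use \eqref{Acorrector}. Dotting with $e$ gives $\nabla\cdot(A(e+\nabla\chi_1))=0$ in $\T^N$. Testing this against $\zeta$ and integrating by parts yields
$$\int_{\T^N}A(e+\nabla\chi_1)\cdot\nabla\zeta\,dy=0,$$
hence $\int A\nabla\chi_1\cdot\nabla\zeta=-\int Ae\cdot\nabla\zeta=-\int A^T\nabla\zeta\cdot e$. Combining the two displays gives exactly the required identity, and adding $\int B\cdot e$ to both sides finishes the proof.

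I do not expect a serious obstacle. The only point needing care is the transpose bookkeeping, since $A$ is not symmetric: $Ae\cdot\nabla\zeta=e\cdot A^T\nabla\zeta$. The regularity $\chi\in C^2$ and $\zeta\in C^1$, which the paper asserts, justifies the integrations by parts. Alternatively, the weak formulations of \eqref{Acorrector} and \eqref{Bcorrector} with $H^1(\T^N)$ test functions suffice directly.
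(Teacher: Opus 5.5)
Your proof is correct and is essentially the paper's argument. The paper also cross-tests the two cell problems: it rewrites $\int_{\T^N} Ae\cdot\nabla\zeta$ as $\int_{\T^N}\nabla\cdot(A^T\nabla\zeta)\,(\chi\cdot e)$ by repeated integration by parts using \eqref{Acorrector}, and then invokes \eqref{Bcorrector}. This is your pair of weak identities chained into a single computation, and your weak form has the minor advantage of never needing second derivatives of $\zeta$.
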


\begin{proof}
	Integrating by parts and using equation \eqref{Acorrector} for $\chi(\cdot)$ gives
	\begin{equation*}
		\begin{aligned}
			\int_{\T^N} A^T \nabla\zeta \cdot e&= \int_{\T^N} (Ae) \cdot \nabla \zeta=-\int_{\T^N}\nabla \cdot (Ae)  \zeta=\int_{\T^N}\nabla \cdot (A \nabla  \chi e)  \zeta\\
			&=-\int_{\T^N}A (\nabla  \chi  e) \cdot \nabla  \zeta = -\int_{\T^N}   (A^T \nabla \zeta)\cdot \nabla  (\chi \cdot e)=\int_{\T^N}  \nabla \cdot (A^T \nabla \zeta) (\chi \cdot e).
		\end{aligned}	
	\end{equation*}
	Furthermore, using equation \eqref{Bcorrector} for $\zeta(\cdot)$, we obtain 
	$$\int_{\T^N}  A^T \nabla\zeta \cdot e = \int_{\T^N}   (\nabla \cdot  B) (\chi \cdot e)=-\int_{\T^N} B \cdot  \nabla (\chi \cdot e).$$
	As a consequence, we have
	$$	\langle B \rangle_{A}\cdot e =\int_{\T^N} (B-A^T \nabla \zeta ) \cdot e=  \int_{\T^N} 	B\cdot  (e + \nabla (\chi \cdot e)).$$
	This completes the proof of Lemma \ref{lem-mean-B}.
\end{proof}

As a consequence of the above identity, we have the following observation. 
\begin{lemma}\label{identity-G0}
For any direction $e\in\mathbb{S}^{N-1}$ with $c_{0,e}>0$, we have 
$G(0,c_{0,e},0,e) = (0,0)$.
\end{lemma}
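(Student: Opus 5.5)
We verify the two components separately. The component $G_2(0,c_{0,e},0,e)$ is immediate: with $L=0$ and $v=0$ the integrand reduces to $\phi_{0,e}^2$, so $G_2=\int_{\R^+\times\T^N}\phi_{0,e}^2-\int_{\R^+}\phi_{0,e}^2=0$, because $\T^N$ has unit measure.

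For $G_1$, since $L=0$ we have $G_1(0,c_{0,e},0,e)=M_{c_{0,e},0,e}^{-1}\bigl(\overline{K(0,c_{0,e},0,e)}\bigr)$. By Lemma \ref{solvability-M} the operator $M_{c_{0,e},0,e}$ is injective when $c_{0,e}>0$, so it suffices to show $\overline{K(0,c_{0,e},0,e)}=0$ in $L^2(\R)$. At $L=0$ and $v=0$ the definition gives
$$K(0,c_{0,e},0,e)=\bigl(A(I_N+\nabla\chi)e\cdot e+\nabla\cdot(A(\chi\cdot e)e)\bigr)\phi_{0,e}''+B\cdot(I_N+\nabla\chi)e\,\phi_{0,e}'+c_{0,e}\phi_{0,e}'+f(y,\phi_{0,e}).$$
We average each coefficient over $\T^N$:
\begin{itemize}
\item The mean of $A(I_N+\nabla\chi)e\cdot e$ is $A^{\rm hom}e\cdot e$, by \eqref{a harmonic mean}.
\item The divergence term $\nabla\cdot(A(\chi\cdot e)e)$ has zero mean by periodicity.
\item Since $(\nabla\chi) e=\nabla(\chi\cdot e)$, the mean of $B\cdot(I_N+\nabla\chi)e$ equals $\int_{\T^N}B\cdot(e+\nabla(\chi\cdot e))$, which is $\langle B\rangle_A\cdot e$ by Lemma \ref{lem-mean-B}.
\item The mean of $f(y,\phi_{0,e}(\xi))$ is $\bar f(\phi_{0,e}(\xi))$, because $\phi_{0,e}$ does not depend on $y$.
\end{itemize}
Hence
$$\overline{K(0,c_{0,e},0,e)}=(A^{\rm hom}e\cdot e)\phi_{0,e}''+(\langle B\rangle_A\cdot e+c_{0,e})\phi_{0,e}'+\bar f(\phi_{0,e}),$$
and this vanishes by the profile equation \eqref{homo-wave}. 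Therefore $G_1(0,c_{0,e},0,e)=0$.

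The argument has no real obstacle. The only points needing care are the identification $(\nabla\chi)e=\nabla(\chi\cdot e)$ that links the drift coefficient to Lemma \ref{lem-mean-B}, and the observation that the divergence term integrates to zero on the torus.
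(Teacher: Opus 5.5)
Your proposal is correct and follows the paper's proof: you average $K(0,c_{0,e},0,e)$ over $\T^N$, using the definition of $A^{\rm hom}$ and Lemma \ref{lem-mean-B} to recover the homogenized profile equation \eqref{homo-wave}, conclude $G_1=M_{c_{0,e},0,e}^{-1}(0)=0$, and check $G_2=0$ directly. Your identification $(\nabla\chi)e=\nabla(\chi\cdot e)$ matches the convention the paper itself uses in the proof of Lemma \ref{lem-mean-B}.
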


\begin{proof}
Since $(\phi_{0,e}(\cdot),c_{0,e})$ satisfies \eqref{homo-wave}, averaging $K(0,c_{0,e},0,e)$ over $\T^N$, and using Lemma \ref{lem-mean-B} and  the definition of $A^{\rm hom}$, we obtain
 $$\overline{K(0,c_{0,e},0,e)}=(A^{\rm hom}e\cdot e)\phi_{0,e}''
 	+\bigl(\langle B\rangle_A\cdot e+c_{0,e}\bigr)\phi_{0,e}'+\bar f(\phi_{0,e})=0.  
 $$
 Consequently, Lemma \ref{solvability-M} and the definition of $G_1$ yield
 $G_1(0,c_{0,e},0,e)=M_{c_{0,e},0,e}^{-1}(\overline{K(0,c_{0,e},0,e)})=0$. The identity $G_2(0,c_{0,e},0,e)=0$ follows directly from its definition.
\end{proof}

Next, we consider the nonlinear map induced by $f$. For a fixed direction $e\in\mathbb{S}^{N-1}$, define
\begin{equation}\label{define-mapg}
	g[v](\xi,y) = f(y,\phi_{0,e}(\xi) + v(\xi,y))\quad\hbox{ for }\; v \in H^{ 1}(\R \times \T^N).
\end{equation}
By \eqref{f-L2}, $g$ maps $H^{ 1}(\R \times \T^N)$ into $L^2(\R\times\T^N)$. 

\begin{lemma}\label{frechet-g}
The map $g:H^1(\R\times\T^N)\to L^2(\R\times\T^N)$
is continuously Fr\'echet differentiable, with derivative
	$$Dg[v]h=\partial_u f(y,\phi_{0,e}+v)h\quad \hbox{for all }\,\, v,\,h\in H^1(\R\times\T^N).$$
\end{lemma}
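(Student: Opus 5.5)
The plan is to treat $g$ as a Nemytskii operator and use the key fact that $H^1(\R\times\T^N)$ embeds continuously into $L^p(\R\times\T^N)$ for some $p>2$. By Sobolev embedding on the $(N+1)$-dimensional cylinder, we may take $p=2(N+1)/(N-1)$ if $N\geq2$, and any finite $p>2$ if $N=1$. The assumption that $f\in C^3$ with bounded derivatives gives two pointwise facts: $\partial_u f$ is bounded, and $\partial_u f(y,\cdot)$ is globally Lipschitz uniformly in $y$. Write $a(\xi,y)=\phi_{0,e}(\xi)+v(\xi,y)$. The candidate derivative $h\mapsto \partial_u f(y,a)h$ is then a bounded linear map $H^1\to L^2$, since its norm is at most $\|\partial_u f\|_\infty$.

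\textbf{Fr\'echet differentiability.} By Taylor's formula,
\[
f(y,a+h)-f(y,a)-\partial_u f(y,a)h=\int_0^1\bigl(\partial_u f(y,a+sh)-\partial_u f(y,a)\bigr)h\,ds .
\]
The integrand is bounded pointwise in two ways: by $2\|\partial_u f\|_\infty|h|$, and by $\|\partial_u^2 f\|_\infty|h|^2$. Interpolating gives a pointwise bound $C|h|^{1+\theta}$ for any $\theta\in(0,1]$. I choose $\theta$ so that $2(1+\theta)\le p$, and also require $2(1+\theta)\geq 2$ (automatic). Since $\|h\|_{L^2}$ and $\|h\|_{L^p}$ are both bounded by $C\|h\|_{H^1}$, so is $\|h\|_{L^{2(1+\theta)}}$ by interpolation. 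Hence the $L^2$ norm of the remainder is at most
\[
C\|h\|_{L^{2(1+\theta)}}^{1+\theta}\le C\|h\|_{H^1}^{1+\theta}=o(\|h\|_{H^1}).
\]
Alternatively, $\theta=1$ works whenever $4\le p$, i.e.\ $N\le 3$. The interpolated version avoids any dimension restriction.

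\textbf{Continuity of $v\mapsto Dg[v]$ in operator norm.} For $v_1,v_2$ and $h$, the Lipschitz bound gives
\[
|(\partial_u f(y,\phi_{0,e}+v_1)-\partial_u f(y,\phi_{0,e}+v_2))h|\le \min\{2\|\partial_uf\|_\infty,\ \|\partial_u^2f\|_\infty|v_1-v_2|\}\,|h|\le C|v_1-v_2|^{\theta}|h| .
\]
H\"older's inequality with exponents $q_1=2(1+\theta)/\theta$ applied to $|v_1-v_2|^\theta$ and $q_2=2(1+\theta)$ applied to $h$ satisfies $1/q_1+1/q_2=1/2$. It yields
\[
\|(Dg[v_1]-Dg[v_2])h\|_{L^2}\le C\|v_1-v_2\|_{L^{2(1+\theta)}}^{\theta}\|h\|_{L^{2(1+\theta)}}\le C\|v_1-v_2\|_{H^1}^\theta\|h\|_{H^1}.
\]
This gives (H\"older) continuity of the derivative.

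\textbf{Main obstacle.} The only delicate point is justifying the embedding $H^1(\R\times\T^N)\hookrightarrow L^{q}$ for $2\le q\le p$ with some $p>2$ on the unbounded cylinder. I would handle it by covering $\R\times\T^N$ with unit cells $(k,k+1)\times(0,1)^N$. On each cell, apply the Sobolev embedding $\|u\|_{L^p(\text{cell})}\le C\|u\|_{H^1(\text{cell})}$ with a constant uniform over cells. Then sum using $\sum a_k^{p/2}\le(\sum a_k)^{p/2}$, valid for $p\geq2$. The remaining step is to check that $g$ itself lands in $L^2$, which is already given by \eqref{f-L2}.
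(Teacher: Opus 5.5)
Your proof is correct. It differs from the paper's in how the nonlinearity is handled.

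The paper first proves operator-norm continuity of $v\mapsto \partial_u f(y,\phi_{0,e}+v)$ by splitting $\R\times\T^N$ into $E_\delta=\{|v_1-v_2|>\delta\}$ and its complement. On the complement it uses the Lipschitz bound, giving $\omega\delta\|h\|_{L^2}$. On $E_\delta$ it uses $|E_\delta|\le\delta^{-p}\|v_1-v_2\|_{L^p}^p$ together with H\"older, and then lets first $v_1\to v_2$ and then $\delta\to0^+$. Fr\'echet differentiability is then deduced from this continuity through $R_v(h)=\int_0^1(\mathcal A(v+th)-\mathcal A(v))h\,dt$.

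You replace the level-set splitting by the pointwise interpolation $\min\{2\|\partial_uf\|_\infty,\|\partial_u^2f\|_\infty|w|\}\le C|w|^\theta$ and a single H\"older inequality with exponents $2(1+\theta)/\theta$ and $2(1+\theta)$. You also estimate the remainder directly rather than through continuity of the derivative.

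Both arguments rest on the same ingredient: the embedding $H^1(\R\times\T^N)\hookrightarrow L^p$ for some $p>2$, combined with boundedness and Lipschitz continuity of $\partial_u f$. Your version gives explicit quantitative bounds, namely a remainder of order $\|h\|_{H^1}^{1+\theta}$ and $\theta$-H\"older continuity of $Dg$ in operator norm. It also avoids the double limit, whereas the paper's argument yields only a qualitative modulus.

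Your cell-by-cell justification of the embedding on the unbounded cylinder, with the pieces summed via $\sum_k a_k^{p/2}\le(\sum_k a_k)^{p/2}$, is also correct. It fills in a step the paper takes for granted.
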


\begin{proof}
For each $v\in H^1(\R\times\T^N)$, define
$$\mathcal A(v)h=\partial_u f(y,\phi_{0,e}+v)h\quad\hbox{for }\,\,h\in H^{1}(\R\times\T^N).$$
By the boundedness of $\partial_u f$ on $\T^N\times\R$, $\mathcal A(v)$ is a bounded linear operator from
$H^1(\R\times\T^N)$ to $L^2(\R\times\T^N)$.
Choose an exponent $p\in(2,+\infty)$ such that $H^1(\R\times\T^N)$ is continuously embedded in
$L^p(\R\times\T^N)$.
Since $\partial_{u}^2 f$ is bounded on $\T^N\times\R$, there exists a constant $\omega>0$ such that 
$$ |\partial_u f(y,u_1)-\partial_u f(y,u_2)|
\leq\omega |u_1-u_2|\quad\hbox{for all }\,  y\in\T^N\,\hbox{ and } \,u_1,u_2\in\R.$$

We first prove the continuity of $\mathcal A$ in the operator norm. To this end,  for $v_1,v_2\in H^1(\R\times\T^N)$ and $\delta>0$, set
$$ E_\delta=\left\{(\xi,y)\in\R\times\T^N:\,|v_1(\xi,y)-v_2(\xi,y)|>\delta\right\}.$$
Then, we have $|E_\delta|\leq \delta^{-p}\|v_1-v_2\|_{L^p(\R\times\T^N)}^p$.
Using H\"older's inequality on $E_\delta$ and the Sobolev embedding,
we obtain, for every $h\in H^1(\R\times\T^N)$,
$$\begin{aligned}
	\|(\mathcal A(v_1)-\mathcal A(v_2))h\|_{L^2(\R\times\T^N)}
	&\leq \omega\delta\|h\|_{L^2(\R\times\T^N)}
	+2\|\partial_uf\|_{L^{\infty}(\T^N\times\R)}|E_\delta|^{\frac12-\frac1p}
	\|h\|_{L^p(\R\times\T^N)}\\
	&\leq\left(\omega\delta+C\delta^{1-p/2}
	\|v_1-v_2\|_{H^1(\R\times\T^N)}^{p/2-1}\right)\|h\|_{H^1(\R\times\T^N)},
\end{aligned}$$
where $C>0$ is independent  of $v_1,v_2,h$ and $\delta$.
Since $p>2$, letting first $v_1\to v_2$ in
$H^1(\R\times\T^N)$ and then $\delta\to0^+$ proves
the continuity of
$\mathcal A: H^1(\R\times\T^N) \to \Lcal(H^1(\R\times\T^N),L^2(\R\times\T^N))$.

Next, we show that $g$ is Fr\'echet differentiable. For any $v,h\in H^1(\R\times\T^N)$, set
$$
R_v(h)=g[v+h]-g[v]-\mathcal A(v)h.$$
The fundamental theorem of calculus gives
$R_v(h)=\int_0^1(\mathcal A(v+th)-\mathcal A(v))hdt$.
Consequently, by the continuity of $\mathcal A$ at $v$, we obtain
\begin{equation}\label{estimate-Rvh}
	\begin{aligned}
		\frac{\|R_v(h)\|_{L^2(\R\times\T^N)}}
		{\|h\|_{H^1(\R\times\T^N)}}
		&\le
		\sup_{0\le t\le1}
		\|\mathcal A(v+th)-\mathcal A(v)\|_{\Lcal(H^1,L^2)}
		\to0
	\end{aligned}
\end{equation}
as $\|h\|_{H^1(\R\times\T^N)}\to0$. 
Thus, $g$ is Fr\'echet differentiable with
$Dg[v]=\mathcal A(v)$.
This completes the proof of Lemma \ref{frechet-g}. 
\end{proof}

In the next lemma, we verify the continuity of $G$ and its differentiability with respect to $(v,c)$. 

\begin{lemma}\label{contiinuity-G}
The map $G: H^{ 1}(\R \times \T^N) \times (0, + \infty) \times \R\times\mathbb{S}^{N-1} \to H^{ 1}(\R \times \T^N) \times \R$ is continuous, and it is Fr\'{e}chet differentiable with respect to $(v,c)$ with derivatives given by
\begin{equation}\label{derivatives}
	\partial_{(v,c)}G(v,c,L,e)(\tilde{v},\tilde{c}) =
	\begin{pmatrix}
		\tilde{v} 
		+ M_{c,L,e}^{-1}\left(\partial_u f(y, \phi_{0,e} + L(\chi \cdot e) \phi_{0,e}' + v)\tilde{v}  + \beta\tilde{v}\right) \vspace{5pt}\\
		\quad +\; \tilde{c}M_{c,L,e}^{-1}\left(\phi_{0,e}' + L(\chi \cdot e) \phi_{0,e}'' 
		- \partial_\xi M_{c,L,e}^{-1}(K(v,c,L,e))\right) \vspace{5pt}\\
	\displaystyle	2 \int_{\R^+ \times \T^N} \left(\phi_{0,e} + L(\chi \cdot e) \phi_{0,e}' + v\right)\tilde{v}
	\end{pmatrix},
\end{equation}
when $L\neq 0$, and
\begin{equation}\label{derivative at 0}
\partial_{(v,c)}G(v,c,0,e)(\tilde{v},\tilde{c}) =
\begin{pmatrix}
\tilde{v} + M_{c,0,e}^{-1}\left(\overline{(\partial_u f(y, \phi_{0,e} + v)\tilde{v} + \beta\tilde{v})}\right) \vspace{5pt}\\
\quad +\; \tilde{c}M_{c,0,e}^{-1}\left(\phi_{0,e}'- \partial_\xi M_{c,0,e}^{-1}(\overline{K(v,c,0,e)})\right) \vspace{5pt}\\
	\displaystyle	2 \int_{\R^+ \times \T^N} (\phi_{0,e} + v)\tilde{v}
	\end{pmatrix},
\end{equation}
for all $(v,c) \in H^{ 1}(\R \times \T^N) \times (0, + \infty)$ and $(\tilde{v},\tilde{c}) \in H^{ 1}(\R \times \T^N) \times \R$. 
\end{lemma}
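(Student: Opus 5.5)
The proof splits into two parts: continuity of $G$ in all four variables $(v,c,L,e)$, and Fr\'echet differentiability in $(v,c)$ with the stated formulas. In both parts the main tools are the uniform invertibility of $M_{c,L,e}$ from Lemma \ref{solvability-M}, the convergence results of Lemmas \ref{Homogenization} and \ref{continuem2}, and Lemma \ref{frechet-g}.

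\textbf{Continuity.} Take a sequence $(v_n,c_n,L_n,e_n)\to(v,c,L,e)$ with $c>0$. I first show that $K(v_n,c_n,L_n,e_n)\to K(v,c,L,e)$ in $L^2(\R\times\T^N)$.
\begin{itemize}
\item For the linear terms, the coefficients are bounded in $y$ (they involve $A$, $B$, $\chi$, $\nabla\chi$, $\nabla\cdot(A(\chi\cdot e)e)$). Lemma \ref{continuity-homo-wave} gives $\phi_{0,e_n}^{(k)}\to\phi_{0,e}^{(k)}$ in $L^2(\R)$ for $1\le k\le 3$, and also $c_{0,e_n}\to c_{0,e}$. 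Together with $c_n\to c$ and $L_n\to L$, this handles all linear terms.
\item For the nonlinear term $f(y,\phi_{0,e_n}+L_n(\chi\cdot e_n)\phi_{0,e_n}'+v_n)$, I use that $\partial_uf$ is bounded. The difference with the limit is then bounded by $C(|\phi_{0,e_n}-\phi_{0,e}|+|L_n\phi'_{0,e_n}-L\phi'_{0,e}|+|v_n-v|)$.
\item The only subtle piece is $\phi_{0,e_n}-\phi_{0,e}$, which is $L^\infty$-small but not obviously $L^2$. Both profiles satisfy uniform exponential tail bounds, $\phi_{0,e}\le Ce^{-\mu\xi}$ and $1-\phi_{0,e}\le Ce^{\mu\xi}$. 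So I split $\R$ into a compact interval, where uniform convergence suffices, and two tails, which are uniformly small in $L^2$.
\end{itemize}
Given $K_n\to K$ in $L^2$, I conclude $G_1$-continuity by cases on the limit $L$:
\begin{itemize}
\item If $L\neq0$, Lemma \ref{continuem2} gives $M^{-1}_{c_n,L_n,e_n}(K_n)\to M^{-1}_{c,L,e}(K)$ in $H^1$.
\item If $L=0$, then for terms with $L_n\neq0$ I use \eqref{homogenization process} of Lemma \ref{Homogenization}, whose limit is exactly $M^{-1}_{c,0,e}(\overline{K(v,c,0,e)})$. For terms with $L_n=0$ I use \eqref{limit-Ln0}, with $\varphi_n=\overline{K_n}\to\overline K$, which holds since averaging is $L^2$-contractive.
\end{itemize}
Mixing the two types of subsequences is harmless. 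Continuity of $G_2$ follows from the same tail argument: write $(\phi_{0,e}+w)^2-\phi_{0,e}^2=2\phi_{0,e}w+w^2$, and note that $\phi_{0,e}\in L^2(\R^+)$ converges in $L^2(\R^+)$ by the exponential bounds.

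\textbf{Differentiability in $(v,c)$.} I fix $(L,e)$ and treat the case $L\neq0$; the case $L=0$ is identical after averaging over $\T^N$.
\begin{itemize}
\item The map $v\mapsto K$ is differentiable by Lemma \ref{frechet-g}, applied with $\phi_{0,e}$ replaced by $\phi_{0,e}+L(\chi\cdot e)\phi_{0,e}'$. The proof of that lemma only uses the boundedness of $\partial_u f$ and $\partial_u^2f$, so it goes through. Its derivative is $\partial_uf(\cdot)\tilde v+\beta\tilde v$.
\item The map $c\mapsto K$ is affine, with derivative $\phi'_{0,e}+L(\chi\cdot e)\phi''_{0,e}$.
\item The less obvious part is $c\mapsto M_{c,L,e}^{-1}$. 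I use the resolvent identity $M_{c',L,e}^{-1}-M_{c,L,e}^{-1}=-(c'-c)M_{c',L,e}^{-1}\partial_\xi M_{c,L,e}^{-1}$, which holds because $M_{c'}-M_c=(c'-c)\partial_\xi$. The right-hand side makes sense since $\partial_\xi M_{c,L,e}^{-1}(g)\in L^2$, and the uniform bounds of Lemma \ref{solvability-M} give Lipschitz continuity of $c\mapsto M_{c,L,e}^{-1}$ in operator norm $L^2\to H^1$.
\item Plugging the resolvent identity back in once more yields the derivative $-M_{c,L,e}^{-1}\partial_\xi M^{-1}_{c,L,e}$, with remainder $O(|c'-c|^2)$.
\end{itemize}
Combining these by the chain and product rules gives \eqref{derivatives}. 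The $\tilde c$-term there is $M^{-1}(\phi'_{0,e}+L(\chi\cdot e)\phi''_{0,e})-M^{-1}\partial_\xi M^{-1}K$, which matches the stated formula. The derivative of $G_2$ is immediate, since the integrand is quadratic in $v$ and $\phi_{0,e}+L(\chi\cdot e)\phi'_{0,e}\in L^2(\R^+\times\T^N)$.

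I expect the main obstacle to be the continuity of $G_1$ across $L=0$, where one must combine sequences with $L_n\neq0$ and $L_n=0$ and also vary $e_n$. Lemma \ref{Homogenization} is tailored to exactly this situation, so the remaining work is checking that $K_n\to K$ in $L^2$ uniformly in the direction, via the tail splitting above.
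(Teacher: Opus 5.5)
Your proposal is correct and follows the paper's proof essentially step by step. Continuity comes from $K(v_n,c_n,L_n,e_n)\to K(v,c,L,e)$ in $L^2$, combined with Lemmas \ref{Homogenization} and \ref{continuem2} according to whether $L=0$ or $L\neq0$; differentiability comes from the identity $M_{c+\tilde c,L,e}=M_{c,L,e}+\tilde c\,\partial_\xi$ together with Lemma \ref{frechet-g}. The only differences are minor refinements: your tail-splitting makes explicit $\|\phi_{0,e_n}-\phi_{0,e}\|_{L^2(\R)}\to0$, which the paper attributes directly to Lemma \ref{continuity-homo-wave} (that lemma only states $L^\infty$ convergence for the undifferentiated profile); and your resolvent identity gives an $O(|\tilde c|^2)$ remainder for the $c$-dependence, whereas the paper bounds the corresponding remainder by $C|\tilde c|$ times an increment that vanishes by the continuity proved in Step 1.
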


\begin{proof}
	For clarity, we divide the proof into two steps. 	
	
	{\bf Step 1:} We show the continuity of $G$. 
	
	We first consider the continuity of $G$ at a point $(v,c,0,e)$. 
	Let $\{v_n\}_{n \in \N}\subset H^{ 1}(\R \times \T^N)$, 
	$\{c_n\}_{n \in \N}\subset (0, +\infty)$, 
	$\{L_n\}_{n \in \N}\subset \R$ 
	and $\{e_n\}_{n \in \N}\subset \mathbb{S}^{N-1}$  be sequences such that 
   $v_n \to v$ in $H^{ 1}(\R \times \T^N)$, $c_n \to c$, $L_n \to 0$ and $e_n \to e$ as $n\to+\infty$. 
   
	We claim that
	\begin{equation}\label{continue-K}
		K(v_n, c_n, L_n, e_n) \to K(v,c,0,e) \quad \hbox{in }\; L^2(\R \times \T^N)\; \hbox{ as }\; n \to +\infty.
	\end{equation}
Indeed, by Lemma \ref{continuity-homo-wave} and the fact that $A$, $B$ and the relevant corrector coefficients are bounded, the following limits hold in $L^2(\R \times \T^N)$ as $n\to+\infty$:
	\begin{equation*}
		\begin{cases}
			\left(A(I_N+ \nabla \chi)e_n \cdot e_n  + \nabla \cdot (A(\chi \cdot e_n)e_n)\right)\phi_{0,e_n}''  \to  \left(A(I_N+ \nabla \chi)e \cdot e + \nabla \cdot (A(\chi \cdot e)e)\right) \phi_{0,e}'',\vspace{5pt}\\
			B \cdot (I_N+ \nabla \chi) e_n \phi_{0,e_n}'  \to  B \cdot (I_N+ \nabla \chi)e \phi_{0,e}', \vspace{5pt}\\
			c_n \phi_{0,e_n}' \to  c \phi_{0,e}'.
		\end{cases}
	\end{equation*}
	Moreover, it is straightforward to check that
	$\left\|\beta (v_n - v)\right\|_{L^2(\R \times \T^N)}  \to 0$,
	and 
	$$
	|L_n|\left\| (Ae_n \cdot e_n) (\chi \cdot e_n) \phi_{0,e_n}^{(3)}+  (B \cdot e_n) (\chi \cdot e_n) \phi_{0,e_n}''
	+  c_n (\chi \cdot e_n)\phi_{0,e_n}''\right\|_{L^2(\R \times \T^N)} \to 0,
	$$
	as $n\to+\infty$. Therefore, by the definition of $K$, to obtain \eqref{continue-K}, it remains to consider the nonlinear term $f$.  For each $n\in\N$, set
	\begin{equation}\label{define-wn-w}
	w_n(\xi,y) = \phi_{0,e_n}(\xi) + L_n (\chi(y) \cdot e_n)\phi_{0,e_n}'(\xi) + v_n(\xi,y)\quad \hbox{ and } \quad w(\xi,y) = \phi_{0,e}(\xi) + v(\xi,y).
	\end{equation}
	Since $v_n \to v$ in $L^{2}(\R \times \T^N)$, and  
	$\|\phi_{0,e_n}- \phi_{0,e}\|_{L^{2}(\R)}\to 0$ by Lemma \ref{continuity-homo-wave},
	we have $\|w_n - w\|_{L^{2}(\R \times \T^N)}\to 0$ as $n \to +\infty$. 
    Since $\partial_u f$ is bounded on $\T^N\times\R$, the mean value theorem gives
	\begin{equation*}
		\|f(y,w_n) - f(y,w)\|_{L^2(\R \times \T^N)}
		\leq \|\partial_u f\|_{L^{\infty}(\T^N\times\R)}\|w_n - w\|_{L^2(\R \times \T^N)} \to 0 \quad \hbox{as}\; n \to +\infty.
	\end{equation*}
	This proves  \eqref{continue-K}.
	
Then, along any subsequence for which $L_n\neq0$, Lemma \ref{Homogenization} yields
	\begin{equation*}
		M_{c_n,L_n,e_n}^{-1}(K(v_n,c_n,L_n,e_n)) \to M_{c,0,e}^{-1}(\overline{K(v,c,0,e)}) 
		\quad \hbox{in }\; H^{ 1}(\R \times \T^N) \;\hbox{ as }\; n \to +\infty.
	\end{equation*}
Similarly, along any subsequence for which $L_n=0$, we have $\overline{K(v_n,c_n,0,e_n)} \to \overline{K(v,c,0,e)}$ in $L^2(\R)$ as $n \to +\infty$,
	and the same lemma implies  
	$$M_{c_n,0,e_n}^{-1}(\overline{K(v_n,c_n,0,e_n)}) \to M_{c,0,e}^{-1}(\overline{K(v,c,0,e)})	\quad \hbox{in }\; H^{ 1}(\R) \;\hbox{ as }\; n \to +\infty.$$ 
	Thus, $G_1$ is continuous at every point with $L=0$. 

To prove the continuity of $G_2$,
note that the uniform exponential estimates in the proof of
Lemma \ref{continuity-homo-wave} imply that
$\sup_{n\in\N}(\|w_n\|_{L^2(\R^+\times\T^N)}
+\|\phi_{0,e_n}\|_{L^2(\R^+)})<+\infty$, where $w_n$ is defined in \eqref{define-wn-w}.
By the Cauchy-Schwarz inequality, we obtain 
\begin{equation*}
	\begin{aligned}
		&\left|G_2(v_n,c_n,L_n,e_n)-G_2(v,c,0,e)\right|\\
		\leq \,& \|w_n-w\|_{L^2(\R^+\times \T^N)}  \|w_n+w\|_{L^2(\R^+\times \T^N)}+\| \phi_{0,e_n} - \phi_{0,e}\|_{L^2(\R^+)}\| \phi_{0,e_n} + \phi_{0,e}\|_{L^2(\R^+)} \to 0,  
	\end{aligned}
\end{equation*}
as $n\to+\infty$. This gives the continuity of $G_2$ at $(v,c,0,e)$.

Next, at a point $(v,c,L,e)$ with $L\neq0$, the continuity
of $K$ follows from almost the same argument, while the continuity
of the inverse operators follows from Lemma \ref{continuem2}, instead of Lemma \ref{Homogenization}.
Therefore, $G$ is continuous on $H^1(\R\times\T^N)\times(0,+\infty)\times\R
\times\mathbb S^{N-1}$.

{\bf Step 2:} We show that $G$ is Fr\'echet differentiable with respect to $(v,c)$. 
	
Fix $(v,c,L,e)\in H^1(\R\times\T^N)\times(0,+\infty)\times\R \times\mathbb S^{N-1}$.
We first consider the case $L\neq0$.
Let $\tilde v\in H^1(\R\times\T^N)$ and $\tilde c\in\R$ satisfy
$$\|\tilde v\|_{H^1(\R\times\T^N)}+|\tilde c| <\min\left\{1,\frac c2\right\}.$$
Clearly, $c+\tilde c>0$. Since $M_{c+\tilde c,L,e}=M_{c,L,e}+\tilde c \partial_\xi$,
by the definitions of $K$ and $g$, we have
\begin{equation*}
	\begin{aligned}
		&M_{c+\tilde c,L,e}^{-1}
		\bigl(K(v+\tilde v,c+\tilde c,L,e)\bigr)
		-M_{c,L,e}^{-1}\bigl(K(v,c,L,e)\bigr)\\
		={}&
		M_{c,L,e}^{-1}\left(\beta\tilde v+\tilde c\bigl(
		\phi_{0,e}'+L(\chi\cdot e)\phi_{0,e}''\bigr)\right)-
		\tilde c\,M_{c,L,e}^{-1}\left(\partial_\xi M_{c+\tilde c,L,e}^{-1}
		\bigl(K(v+\tilde v,c+\tilde c,L,e)\bigr)\right)\\
		&\qquad+M_{c,L,e}^{-1}\Bigl(g[L(\chi\cdot e)\phi_{0,e}'+v+\tilde v]
		-g[L(\chi\cdot e)\phi_{0,e}'+v]\Bigr),
	\end{aligned}
\end{equation*}
where the function $g$ is defined in \eqref{define-mapg}.
Subtracting the candidate derivative in \eqref{derivatives}
from the increment of $G_1$, we obtain
$$G_1(v+\tilde v,c+\tilde c,L,e)-G_1(v,c,L,e)-\partial_{(v,c)}G_1(v,c,L,e)(\tilde v,\tilde c)=W_1-W_2,$$
where
$$
W_1=M_{c,L,e}^{-1}\Bigl(g[L(\chi\cdot e)\phi_{0,e}'+v+\tilde v]
	-g[L(\chi\cdot e)\phi_{0,e}'+v]-Dg[L(\chi\cdot e)\phi_{0,e}'+v]\tilde v\Bigr),$$
and
$$W_2=\tilde c\,M_{c,L,e}^{-1}\Bigl(\partial_\xi M_{c+\tilde c,L,e}^{-1}
	\bigl(K(v+\tilde v,c+\tilde c,L,e)\bigr)-\partial_\xi M_{c,L,e}^{-1}
	\bigl(K(v,c,L,e)\bigr)\Bigr).$$
Since $L(\chi\cdot e)\phi_{0,e}'+v\in
H^1(\R\times\T^N)$,  Lemma \ref{frechet-g}, the estimate \eqref{estimate-Rvh} and Lemma \ref{solvability-M} give
$$\|W_1\|_{H^1(\R\times\T^N)}=o\bigl(\|\tilde v\|_{H^1(\R\times\T^N)}\bigr)
\quad\hbox{as }\, \, \|\tilde v\|_{H^1(\R\times\T^N)}\to0.$$

To estimate $W_2$, by the definition of $G_1$, we have
\begin{equation*}
	\begin{aligned}
&M_{c+\tilde c,L,e}^{-1}\bigl(K(v+\tilde v,c+\tilde c,L,e)\bigr)-M_{c,L,e}^{-1}\bigl(K(v,c,L,e)\bigr)\\
={}&G_1(v+\tilde v,c+\tilde c,L,e)-G_1(v,c,L,e)-\tilde v.
	\end{aligned}
\end{equation*}
Applying Lemma \ref{solvability-M} once more, we obtain
$$\begin{aligned}
	\|W_2\|_{H^1(\R\times\T^N)}
	\le{}& C|\tilde c|\,
	\Bigl\|
	M_{c+\tilde c,L,e}^{-1}
	\bigl(K(v+\tilde v,c+\tilde c,L,e)\bigr)
	-M_{c,L,e}^{-1}\bigl(K(v,c,L,e)\bigr)
	\Bigr\|_{H^1(\R\times\T^N)}\\
	={}&
	o\bigl(\|\tilde v\|_{H^1(\R\times\T^N)}
	+|\tilde c|\bigr),
\end{aligned}$$
where $C$ is independent of $\tilde v$ and $\tilde c$, and the last estimate follows from the continuity proved in Step 1. 
Combining these estimates gives
$$\begin{aligned}
	&\Bigl\|G_1(v+\tilde v,c+\tilde c,L,e)-G_1(v,c,L,e)
	-\partial_{(v,c)}G_1(v,c,L,e)(\tilde v,\tilde c)\Bigr\|_{H^1(\R\times\T^N)}\\
	={}&o\bigl(\|\tilde v\|_{H^1(\R\times\T^N)}
	+|\tilde c|\bigr),
\end{aligned}$$
as $\|\tilde v\|_{H^1(\R\times\T^N)}+|\tilde c|\to 0$. 
Therefore, $G_1$ is Fr\'echet differentiable with respect to $(v,c)$ at every point with $L\neq0$, with derivative
given by the first component of \eqref{derivatives}.

For $L=0$, the same argument applies after averaging
over $y\in\T^N$. Using $M_{c,0,e}^{-1}\bigl(\overline{K(\cdot)}\bigr)$
in place of $M_{c,L,e}^{-1}(K(\cdot))$, we obtain the first component of \eqref{derivative at 0}.

Finally, the differentiability of $G_2$ follows from the exact identity
$$
G_2(v+\tilde v,c+\tilde c,L,e)-G_2(v,c,L,e)-2\int_{\R^+\times\T^N}
\bigl(\phi_{0,e}+L(\chi\cdot e)\phi_{0,e}'+v\bigr)
\tilde v=\int_{\R^+\times\T^N}\tilde v^2.
$$
Consequently, $G_2$ is Fr\'echet differentiable with respect to $(v,c)$, with derivative given by the second component
of \eqref{derivatives} or \eqref{derivative at 0}, according to whether $L\neq0$ or $L=0$.
This completes the proof of Step 2, and hence, Lemma \ref{contiinuity-G} is proved. 
\end{proof}

The above lemma and Lemma \ref{Homogenization} imply that,
for each fixed 
$(\tilde v,\tilde c)\in H^1(\R\times\T^N)\times\R$,
$$\partial_{(v,c)}G(v,c,L,e)(\tilde v,\tilde c) \to
\partial_{(v,c)}G(v,c,0,e)(\tilde v,\tilde c)
\quad\hbox{in }\;H^1(\R\times\T^N)\times\R \quad\hbox{as }\;L\to0.$$
However, these results do not directly imply that the convergence
is uniform for $(\tilde v,\tilde c)$ in the unit ball of
$H^1(\R\times\T^N)\times\R$, as required for convergence in
the operator norm.
The following lemma proves the operator-norm continuity of
$\partial_{(v,c)}G$ at $(0,c_{0,e},0,e)$ whenever $c_{0,e}>0$,
as well as the invertibility of the derivative at this point.

\begin{lemma}\label{continuity-operotor}
For any $e\in\mathbb{S}^{N-1}$ such that $c_{0,e}>0$, the map $\partial_{(v,c)}G$ is continuous at $(0,c_{0,e},0,e)$ in the operator norm on
$\Lcal(H^1(\R\times\T^N)\times\R)$. Moreover, the operator
$$Q=\partial_{(v,c)}G(0,c_{0,e},0,e): H^1(\R\times\T^N)\times\R \to H^1(\R\times\T^N)\times\R$$
is invertible.
\end{lemma}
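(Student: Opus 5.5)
The plan has two parts: a quantitative estimate that gives operator-norm continuity, and a reduction to the one-dimensional operator $H_e$ for invertibility. Write $a_{v,L,e}(\xi,y)=\partial_u f(y,\phi_{0,e}+L(\chi\cdot e)\phi_{0,e}'+v)+\beta$.

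\emph{Operator-norm continuity.} Comparing \eqref{derivatives} with \eqref{derivative at 0}, the differences fall into three groups:
\begin{itemize}
\item[(a)] the $G_2$-component and the $\tilde c$-terms, which act on a one-dimensional space;
\item[(b)] the term $M_{c,L,e}^{-1}(a_{v,L,e}\tilde v)-M_{c_{0,e},0,e}^{-1}(\overline{a_{0,0,e}\tilde v})$, which must be bounded uniformly for $\|\tilde v\|_{H^1}\le1$;
\item[(c)] the term $\partial_\xi M^{-1}_{c,L,e}(K(v,c,L,e))$ inside the $\tilde c$-part.
\end{itemize}
Groups (a) and (c) are single vectors, so their convergence follows from Lemma \ref{contiinuity-G} (continuity of $G$) together with Lemmas \ref{solvability-M} and \ref{Homogenization}. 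Group (b) is the real content.

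First I replace $a_{v,L,e}$ by $a_{0,0,e}$. The error is $\|(\partial_uf(\cdot,w)-\partial_uf(\cdot,\phi_{0,e}))\tilde v\|_{L^2}$ with $w-\phi_{0,e}\to0$ in $H^1$. It tends to $0$ uniformly in $\tilde v$ by exactly the $E_\delta$/Sobolev argument of Lemma \ref{frechet-g}. Combined with the uniform bound of Lemma \ref{solvability-M}, this disposes of the coefficient change. Lemma \ref{Homogenization} and its proof handle the changes $c\to c_{0,e}$, $e_n\to e$ in the same way.

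The main step is then a quantitative lemma. There is $C$ such that, for $0<|L|\le L_0$, $c$ near $c_{0,e}$ and $h\in H^1(\R\times\T^N)$ with $\int_{\T^N}h(\xi,y)\,dy=0$,
\begin{equation*}
\|M_{c,L,e}^{-1}(h)\|_{H^1(\R\times\T^N)}\le C|L|\,\|h\|_{H^1(\R\times\T^N)}.
\end{equation*}
To prove it, I write $h=\nabla_y\cdot F$ with $F=\nabla_y\Delta_y^{-1}h$, so that $\|F\|_{L^2}+\|\partial_\xi F\|_{L^2}\le C\|h\|_{H^1}$. Then
\begin{equation*}
h=L\,\tilde\nabla_{L,e}\cdot F-L\,\partial_\xi(e\cdot F).
\end{equation*}
Testing against $v=M^{-1}_{c,L,e}(h)$ and using \eqref{key-inequality} gives $\|v\|_{L^2}+\|\tilde\nabla_{L,e}v\|_{L^2}\le C|L|\|h\|_{H^1}$. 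The $\partial_\xi v$ bound comes from the Fourier-side imaginary-part estimate of Lemma \ref{solvability-M}, applied fiberwise with the same splitting of $h_\tau$.

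With this lemma I split $a_{0,0,e}\tilde v=\overline{a_{0,0,e}\tilde v}+(a_{0,0,e}\tilde v-\overline{a_{0,0,e}\tilde v})$. The second piece has zero $y$-mean and $H^1$-norm at most $C\|\tilde v\|_{H^1}$, because $a_{0,0,e}$ is $C^1$ with bounded derivatives. So it contributes $O(L)$ uniformly. For the first piece, $\varphi:=\overline{a_{0,0,e}\tilde v}\in H^1(\R)$, I set $v_0=M^{-1}_{c,0,e}\varphi\in H^3(\R)$ and $w=v-v_0-L(\chi\cdot e)v_0'$ as in Step 2 of Lemma \ref{Homogenization}. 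The right-hand side $M_{c,L,e}w$ is then a sum of two kinds of terms. The first kind has zero $y$-mean; it uses Lemma \ref{lem-mean-B} and the definition of $A^{\rm hom}$, and has the form (periodic zero-mean coefficient)$\times v_0''$ or $v_0'$. The second kind consists of terms that are $O(L)$ in $H^{-1}$. Applying the quantitative lemma to the first kind and the energy estimate to the second gives
\begin{equation*}
\|M^{-1}_{c,L,e}(\varphi)-M^{-1}_{c,0,e}(\varphi)\|_{H^1}\le C|L|\,\|\varphi\|_{H^1(\R)}.
\end{equation*}
This proves uniform convergence on the unit ball. I expect this step — obtaining a rate that is uniform in the data rather than the mere strong convergence of Lemma \ref{Homogenization} — to be the main obstacle. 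In particular, the $\partial_\xi$ part of the $H^1$ norm needs care, and I would handle it through the Fourier representation.

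\emph{Invertibility of $Q$.} Since $\overline{K(0,c_{0,e},0,e)}=0$ by Lemma \ref{identity-G0}, we have
\begin{equation*}
Q(\tilde v,\tilde c)=\Bigl(\tilde v+M_{c_{0,e},0,e}^{-1}\bigl(\overline{a\tilde v}\bigr)+\tilde c\,M_{c_{0,e},0,e}^{-1}(\phi_{0,e}'),\ 2\int_{\R^+\times\T^N}\phi_{0,e}\tilde v\Bigr),\qquad a=\partial_uf(y,\phi_{0,e})+\beta.
\end{equation*}
Given $(h,r)$, I look for $\tilde v=h-z$ with $z\in H^2(\R)$ independent of $y$. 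Then $\overline{a\tilde v}=\overline{ah}-(\bar f'(\phi_{0,e})+\beta)z$, and by \eqref{collection-M-H} the first equation becomes
\begin{equation*}
H_e z=\overline{ah}+\tilde c\,\phi_{0,e}'.
\end{equation*}
By Lemma \ref{properties of H} the range of $H_e$ is closed with cokernel spanned by $w_e$. Since $\int\phi_{0,e}'w_e>0$, there is a unique $\tilde c$ for which the right-hand side is orthogonal to $w_e$. The solution $z$ is then determined up to $\R\phi_{0,e}'$. The multiple of $\phi_{0,e}'$ is fixed by the second equation, because $2\int_{\R^+}\phi_{0,e}\phi_{0,e}'=-\phi_{0,e}(0)^2=-1/4\neq0$. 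Running the same argument with $(h,r)=(0,0)$ gives injectivity. Boundedness of $Q^{-1}$ follows from the open mapping theorem, or directly from the estimates in each step.
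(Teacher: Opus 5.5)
Your proposal is correct, but it takes a different route from the paper in both halves.

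\textbf{Continuity.} The paper does not split the datum into its $y$-mean and a zero-mean remainder. It forms $z_n=w_n-w_0-L_n(\chi\cdot e_n)w_0'$ for the full datum $(\partial_uf(y,\phi_{0,e})+\beta)h$ and tests $M_{c_n,L_n,e_n}z_n=R_n$ with $-z_n$. It absorbs the zero-mean part of $R_n$ by pairing it with $z_n-\overline{z_n}$, using the Poincar\'e inequality in $y$ and the a priori bound $\|\nabla_y z_n\|_{L^2}\le C|L_n|$. This only gives $\|z_n\|_{L^2}^2\le C(|L_n|+|c_n-c_{0,e}|+|e_n-e|)$. The $\partial_\xi$-part is then recovered by interpolating against a uniform bound on $\partial_\xi^2(w_n-w_0)$.

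You instead prove a sharper statement, $\|M^{-1}_{c,L,e}h\|_{H^1}\le C|L|\,\|h\|_{H^1}$ for zero-mean $h$, by writing $h=\nabla_y\cdot F$ and moving the $\partial_\xi$ onto $F$. The fiberwise step you left implicit does close. The real part gives $\|D_\tau w_\tau\|+\|w_\tau\|\le C|L|(1+|\tau|)\|F_\tau\|$. Multiplying the imaginary-part inequality by $|\tau|$ and using this gives the same bound for $|\tau|\,\|w_\tau\|$.

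Your route buys a linear rate and a cleaner two-scale structure, at the price of redoing the Fourier estimate for divergence-form data. The paper's duality trick gets by with the estimates of Lemma \ref{solvability-M} as they stand. Both arguments use the $\xi$-regularity of the datum inherited from $\tilde v\in H^1$ in the same essential way.

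Two small clarifications on your version:
\begin{itemize}
\item The passage $(c_n,e_n)\to(c_{0,e},e)$ should come from the explicit $L=0$ computation at the end of the proof of Lemma \ref{Homogenization}. That computation is uniform, with bound $C(|c_n-c_{0,e}|+|e_n-e|)\,\|\varphi\|_{L^2}$. The lemma's compactness statement itself is not uniform in the data.
\item Your two-scale comparison $v-v_0-L(\chi\cdot e)v_0'$ should be carried out at fixed $(c_n,e_n)$, so that the zero-mean identities for the coefficients apply.
\end{itemize}

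\textbf{Invertibility.} The paper only invokes Lemma \ref{properties of H} and defers to a Fredholm argument from the one-dimensional literature. Your explicit reduction is complete and self-contained, and it produces the inverse explicitly:
\begin{itemize}
\item any solution has $\tilde v-h$ independent of $y$;
\item this reduces the first equation to $H_ez=\overline{ah}+\tilde c\,\phi_{0,e}'$;
\item $\tilde c$ is fixed by orthogonality to $w_e$;
\item the $\phi_{0,e}'$-component of $z$ is fixed by the normalization, since $2\int_{\R^+}\phi_{0,e}\phi_{0,e}'=-1/4\neq0$.
\end{itemize}
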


\begin{proof}
Fix $e\in\mathbb S^{N-1}$ with $c_{0,e}>0$, and let $\{v_n\}_{n \in \N}\subset H^{ 1}(\R \times \T^N)$, 
$\{c_n\}_{n \in \N}\subset (0, +\infty)$, 
$\{L_n\}_{n \in \N}\subset \R$ 
and $\{e_n\}_{n \in \N}\subset \mathbb{S}^{N-1}$  be sequences such that 
$v_n \to 0$ in $H^{ 1}(\R \times \T^N)$, $c_n \to c_{0,e}$, $L_n \to 0$ and $e_n \to e$ as $n\to+\infty$. 
To prove the continuity of $\partial_{(v,c)}G$ at $(0,c_{0,e},0,e)$ in the operator norm, it suffices to show that, as $n\to+\infty$, 
\begin{equation}\label{frechel-GVC}
\bigl\| \partial_{(v,c)}G(v_n,c_n,L_n,e_n)-\partial_{(v,c)}G(0,c_{0,e},0,e)
\bigr\|_{\Lcal(H^1(\R\times\T^N)\times\R)}\to  0.
\end{equation}
We first consider a subsequence with $L_n\neq0$. The case $L_n=0$ will be addressed briefly at the end of the proof.

We begin with the continuity of $\partial_vG_1$. Since $v_n \to 0$ in $H^{ 1}(\R \times \T^N)$, $e_n \to e$ and $L_n \to 0$, by Lemma \ref{continuity-homo-wave} and the uniform exponential estimates in its proof, we have
$$\left\|\phi_{0,e_n}-\phi_{0,e}+L_n(\chi\cdot e_n)\phi_{0,e_n}'+v_n\right\|_{H^1(\R\times\T^N)}\mathop{\longrightarrow}_{n\to+\infty}0.$$
Then, applying Lemma \ref{frechet-g}, we obtain
$$\sup_{\|h\|_{H^1(\R\times\T^N)}\leq 1}\left\|\left(\partial_u f\left(y,\phi_{0,e_n}+L_n(\chi\cdot e_n)\phi_{0,e_n}'+v_n\right)-\partial_u f(y,\phi_{0,e})\right)h\right\|_{L^2(\R\times \T^N)}\mathop{\longrightarrow}_{n\to+\infty} 0.$$
Thus, by the derivative formulas of $\partial_vG_1$ given in Lemma \ref{contiinuity-G} and 
the uniform inverse estimate in Lemma \ref{solvability-M}, it remains to prove that, as $n\to+\infty$,
\begin{equation}\label{uniform-linearized-resolvent}
\sup_{\|h\|_{H^1}\leq 1} 
\left\|M_{c_n,L_n,e_n}^{-1}\left((\partial_u f(y,\phi_{0,e})+\beta)h\right)-
		 M_{c_{0,e},0,e}^{-1}\left(\overline{(\partial_u f(y,\phi_{0,e})+\beta)h}\right)\right\|_{H^1(\R\times\T^N)} \to 0
\end{equation}
To this end, fix $h\in H^1(\R\times\T^N)$ with $\|h\|_{H^1(\R\times\T^N)}\leq 1$, and set
$$ w_n=M_{c_n,L_n,e_n}^{-1}\left((\partial_u f(y,\phi_{0,e})+\beta)h\right),\qquad w_0=M_{c_{0,e},0,e}^{-1}\left(\overline{(\partial_u f(y,\phi_{0,e})+\beta)h}\right).$$
In the estimates below, $C>0$ is independent of all sufficiently large $n$ and of $h$ in the unit ball of
$H^1(\R\times\T^N)$, and may vary from line to line.

Since 
$\partial_\xi\left((\partial_u f(y,\phi_{0,e})+\beta)h\right)=\partial_{u}^2f(y,\phi_{0,e})\phi_{0,e}'h+(\partial_u f(y,\phi_{0,e})+\beta)\partial_\xi h$,
we have
$$
\|(\partial_u f(y,\phi_{0,e})+\beta)h\|_{L^2(\R\times\T^N)}+\|\partial_\xi((\partial_u f(y,\phi_{0,e})+\beta)h)\|_{L^2(\R\times\T^N)} \leq C.
$$
Differentiating the constant-coefficient equation for $w_0$ in $\xi$ gives
$$(A^{\rm hom}e\cdot e)w_0'''=
	\overline{\partial_\xi\bigl((\partial_u f(y,\phi_{0,e})+\beta)h\bigr)}-(\langle B\rangle_A\cdot e+c_{0,e})w_0''
+\beta w_0'.$$
Together with Lemma \ref{solvability-M}, this implies 
\begin{equation}\label{w0-H3-uniform}
	\|w_0\|_{H^3(\R)}\le C.
\end{equation}
Next,  using the mollification operator $S_\varepsilon u=\rho_\varepsilon *_\xi u$
introduced in the proof of Lemma \ref{Homogenization} (notice that the coefficients of $M_{c_n,L_n,e_n}$ are independent of $\xi$), we have
$$\begin{aligned}
\partial_\xi S_\varepsilon w_n =\rho_\varepsilon' *_\xi w_n
&=M_{c_n,L_n,e_n}^{-1}\bigl(\rho_\varepsilon' *_\xi((\partial_u f(y,\phi_{0,e})+\beta)h)\bigr)\\
&=M_{c_n,L_n,e_n}^{-1}\bigl(S_\varepsilon\partial_\xi((\partial_u f(y,\phi_{0,e})+\beta)h)\bigr).
\end{aligned}$$
For each fixed $n$ and $h$, passing to the limit as $\varepsilon\to 0^+$ yields 
$\partial_\xi w_n= M_{c_n,L_n,e_n}^{-1}\bigl(\partial_\xi((\partial_u f(y,\phi_{0,e})+\beta)h)\bigr)
\in H^1(\R\times\T^N)$.
Applying Lemma \ref{solvability-M} once again yields
\begin{equation}\label{wn-H2-uniform}
\|w_n\|_{H^1(\R\times\T^N)}
+\|\partial_\xi w_n\|_{H^1(\R\times\T^N)} \leq C.
\end{equation}

Now, for each $n\in\N$, set
$$z_n=w_n-w_0-L_n(\chi\cdot e_n)w_0'.$$
By \eqref{w0-H3-uniform} and the cell problem for $\chi$,
we have $z_n\in\mathcal D_{L_n,e_n}$ and $\|z_n\|_{H^1(\R\times\T^N)}\le C$.
Furthermore, Lemma \ref{solvability-M} gives
$$ \|\nabla_yw_n\|_{L^2(\R\times\T^N)} \leq |L_n|(\|\tilde\nabla_{L_n,e_n}w_n\|_{L^2(\R\times\T^N)}
	+\|\partial_\xi w_n\|_{L^2(\R\times\T^N)}) \leq C|L_n|.$$ 
Consequently, we have
\begin{equation}\label{unif-estaime-zn}
\|\nabla_yz_n\|_{L^2(\R\times\T^N)}\le C|L_n|.
\end{equation}
Using the cell problem for $\chi$ and the equation $M_{c_{0,e},0,e}(w_0)=\overline{(\partial_u f(y,\phi_{0,e})+\beta)h}$, we compute
\begin{equation}\label{linearized-corrected-equation}
	M_{c_n,L_n,e_n}(z_n)=R_n,
\end{equation}
where
$$
\begin{aligned}
	R_n={}&(\partial_u f(y,\phi_{0,e})+\beta)h-\overline{(\partial_u f(y,\phi_{0,e})+\beta)h}\\
	&+\Bigl(A^{\rm hom}e\cdot e -A(I_N+\nabla\chi)e_n\cdot e_n -\nabla_y\cdot\bigl(A(\chi\cdot e_n)e_n\bigr)\Bigr)w_0''\\
	&+\Bigl(\langle B\rangle_A\cdot e+c_{0,e}-B\cdot(I_N+\nabla\chi)e_n-c_n\Bigr)w_0'\\
	&-L_n(\chi\cdot e_n)\Bigl((Ae_n\cdot e_n)w_0'''+(B\cdot e_n+c_n)w_0''	-\beta w_0'\Bigr).
\end{aligned}
$$
By Lemma \ref{lem-mean-B} and the definition of $A^{\rm hom}$, together with \eqref{w0-H3-uniform}, we have
\begin{equation}\label{linearized-residual-bounds}
\|R_n\|_{L^2(\R\times\T^N)}\leq C,\quad \hbox{and}\quad  \|\overline{R_n}\|_{L^2(\R)} \leq	C\bigl(|L_n|+|c_n-c_{0,e}|+|e_n-e|\bigr).
\end{equation}

Testing \eqref{linearized-corrected-equation} with
$-z_n$ and using the energy inequality from
Lemma \ref{solvability-M}, we obtain
$$
\begin{aligned}
\frac{\eta_0}{2}\left\|\tilde\nabla_{L_n,e_n}z_n\right\|_{L^2(\R\times\T^N)}^2+\frac{\beta}{2}\left\|z_n\right\|_{L^2(\R\times\T^N)}^2
&\leq -\int_{\R\times\T^N} (R_n-\overline{R_n})(z_n-\overline{z_n})-\int_{\R\times\T^N}\overline{R_n}z_n\\
&\leq C\|\nabla_yz_n\|_{L^2(\R\times\T^N)} +\|\overline{R_n}\|_{L^2(\R)}\|z_n\|_{L^2(\R\times\T^N)},
\end{aligned}
$$
where the Poincar\'e inequality in $y$ was used in the second inequality. By \eqref{unif-estaime-zn},
\eqref{linearized-residual-bounds} and the uniform $H^1$ bound for $z_n$, it follows that
$$\frac{\eta_0}{2}\left\|\tilde\nabla_{L_n,e_n}z_n\right\|_{L^2(\R\times\T^N)}^2+\frac{\beta}{2}\left\|z_n\right\|_{L^2(\R\times\T^N)}^2\leq C\bigl(|L_n|+|c_n-c_{0,e}|+|e_n-e|\bigr).$$
Since
$\|w_n-w_0\|_{L^2(\R\times\T^N)}\leq \|z_n\|_{L^2(\R\times\T^N)}+C|L_n|$,
we conclude that
$$ \sup_{\|h\|_{H^1(\R\times\T^N)}\leq 1} \|w_n-w_0\|_{L^2(\R\times\T^N)}\mathop{\longrightarrow}_{n\to+\infty} 0.$$
Furthermore, \eqref{w0-H3-uniform} and  \eqref{wn-H2-uniform} give
$\|\partial_{\xi}^2(w_n-w_0)\|_{L^2(\R\times\T^N)}\leq C$.
Thus, integration by parts in $\xi$ yields
$$
\|\partial_\xi(w_n-w_0)\|_{L^2(\R\times\T^N)}^2=-\int_{\R\times\T^N}(w_n-w_0)\partial_{\xi}^2(w_n-w_0)\leq C\|w_n-w_0\|_{L^2(\R\times\T^N)}.
$$
Together with
$\|\nabla_y(w_n-w_0)\|_{L^2(\R\times\T^N)}=\|\nabla_yw_n\|_{L^2(\R\times\T^N)}\le C|L_n|$,
this proves \eqref{uniform-linearized-resolvent}, and therefore, we obtain
\begin{equation}\label{continuty-G1V}
\|\partial_vG_1(v_n,c_n,L_n,e_n)
-\partial_vG_1(0,c_{0,e},0,e)\|_{\Lcal(H^1(\R\times\T^N))}\mathop{\longrightarrow}_{n\to+\infty} 0 \quad\hbox{as }\,\,n\to+\infty.
\end{equation}

We next consider the continuity of $\partial_cG_1$. By Lemmas \ref{identity-G0} and \ref{contiinuity-G}, we have
$$ M_{c_n,L_n,e_n}^{-1}(K(v_n,c_n,L_n,e_n)) =G_1(v_n,c_n,L_n,e_n)-v_n \mathop{\longrightarrow}_{n\to+\infty} 0
\quad\hbox{in }\,\, H^1(\R\times\T^N).$$
Consequently,
\begin{equation*}
\phi_{0,e_n}'+L_n(\chi\cdot e_n)\phi_{0,e_n}''-\partial_\xi  M_{c_n,L_n,e_n}^{-1}(K(v_n,c_n,L_n,e_n))
\mathop{\longrightarrow}_{n\to+\infty}\phi_{0,e}'	\quad\hbox{in }\,\,L^2(\R\times\T^N).
\end{equation*}
The derivative formulas of $\partial_cG_1$ given in Lemma \ref{contiinuity-G} and Lemma \ref{Homogenization} therefore give
\begin{equation}\label{continuty-G1c}
	\begin{aligned}
		\partial_cG_1(v_n,c_n,L_n,e_n)
		{}&=M_{c_n,L_n,e_n}^{-1}\Bigl(\phi_{0,e_n}'
		+L_n(\chi\cdot e_n)\phi_{0,e_n}''-\partial_\xi M_{c_n,L_n,e_n}^{-1}(K(v_n,c_n,L_n,e_n))\Bigr)\\
		{}&	\mathop{\longrightarrow}_{n\to+\infty}  M_{c_{0,e},0,e}^{-1}(\phi_{0,e}')=	\partial_cG_1(0,c_{0,e},0,e)
	\end{aligned}
\end{equation}
in $H^1(\R\times\T^N)$.   Since the $c$-variable is scalar, this convergence also holds in $\Lcal(\R,H^1(\R\times\T^N))$.

For the second component, the Cauchy-Schwarz inequality gives
\begin{equation}\label{coninuity-G2-VC} \begin{aligned}
&\|\partial_{(v,c)}G_2(v_n,c_n,L_n,e_n)-\partial_{(v,c)}G_2(0,c_{0,e},0,e) \|_{\Lcal(H^1(\R\times\T^N)\times\R,\R)}\\
&\qquad\qquad\qquad\leq 2\left\| \phi_{0,e_n}-\phi_{0,e} +L_n(\chi\cdot e_n)\phi_{0,e_n}'+v_n \right\|_{L^2(\R^+\times\T^N)}
	\mathop{\longrightarrow}_{n\to+\infty} 0.
\end{aligned}\end{equation}
Combining \eqref{continuty-G1V}, \eqref{continuty-G1c} and \eqref{coninuity-G2-VC} proves
\eqref{frechel-GVC} along every subsequence with $L_n\neq0$.

For completeness, we consider a subsequence with $L_n=0$. In this case,
redefine
$w_n=M_{c_n,0,e_n}^{-1}(\overline{(\partial_u f(y,\phi_{0,e})+\beta)h})$,
and keep $w_0$ as above. Then
$$ M_{c_n,0,e_n}(w_n-w_0)= \bigl(A^{\rm hom}e\cdot e
	-A^{\rm hom}e_n\cdot e_n\bigr)w_0''+ \bigl(	\langle B\rangle_A\cdot(e-e_n)
	+c_{0,e}-c_n\bigr)w_0'.$$
By Lemma \ref{solvability-M} and \eqref{w0-H3-uniform}, we have
$$
\sup_{\|h\|_{H^1(\R\times\T^N)}\leq 1} \|w_n-w_0\|_{H^1(\R)} \leq
C\bigl(|c_n-c_{0,e}|+|e_n-e|\bigr) \mathop{\longrightarrow}_{n\to+\infty} 0. $$
Thus, \eqref{continuty-G1V} also holds along this subsequence.
The other two components are treated in the same way,
using the $L=0$ derivative formula and the corresponding convergence result in Lemma \ref{Homogenization}.
Therefore, \eqref{frechel-GVC} holds for the entire sequence.

Finally, Lemma \ref{properties of H}  allows us to apply the same Fredholm argument as in the proof of \cite[Lemma 3.4]{dhz1} in the one-dimensional case. This proves the invertibility of $Q$ and completes the proof of Lemma \ref{continuity-operotor}.
\end{proof}

We are now ready to prove Theorem \ref{existence small period}.

\begin{proof}[Proof of Theorem {\rm \ref{existence small period}}]

We first prove statement (i). 
Fix $e\in\mathbb{S}^{N-1}$ such that $c_{0,e}\neq0$.
We may assume without loss of generality that
\begin{equation}\label{assume-positive-speed}
	c_{0,e}>0.
\end{equation}
Indeed, if $c_{0,e}<0$, consider the reflected coefficients
\begin{equation}\label{reflection-wave}
\tilde A(y)=A(-y),\qquad\tilde B(y)=-B(-y),\qquad\hbox{and}\quad\tilde f(y,u)=-f(-y,1-u).
\end{equation}
The transformation $u(t,x)\mapsto1-u(t,-x)$ maps solutions of \eqref{equation} to solutions
of the equation with coefficients
$(\tilde A,\tilde B,\tilde f)$.
Regarding the wave profile and wave speed, it maps $\phi_{L,e}$ and $c_{L,e}$, respectively, to
$\tilde\phi_{L,e}(\xi,y)=1-\phi_{L,e}(-\xi,-y)$ and $\tilde c_{L,e}=-c_{L,e}$.
This transformation is reversible and preserves the assumptions on the coefficients.
Moreover, the reflected cell problems have correctors $-\chi(-y)$ and $\zeta(-y)$, respectively.
Consequently,
$\tilde A^{\rm hom}=A^{\rm hom},$ $\langle\tilde B\rangle_{\tilde A}=-\langle B\rangle_A$, and
$\bar{\tilde f}(u)=-\bar f(1-u)$. 
Thus, the normalized homogeneous wave for the reflected
equation is $\tilde\phi_{0,e}(\xi)=1-\phi_{0,e}(-\xi)$, $\tilde c_{0,e}=-c_{0,e}>0$.
It therefore suffices to prove {\rm (i)} under \eqref{assume-positive-speed}.
	
By Lemma \ref{identity-G0}, we have $G(0,c_{0,e},0,e)=(0,0)$. In view of Lemmas \ref{contiinuity-G} and
\ref{continuity-operotor}, we can apply the implicit function theorem with continuous parameters $L$ and
$e'$ (see e.g., \cite{deimling}).
Thus, there exist $L_*(e)>0$, a neighborhood $\Lambda(e)$ of $e$ in $\mathbb S^{N-1}$, and a continuous local solution map
$$(L,e')\mapsto(v_{L,e'},c_{L,e'}) \in H^1(\R\times\T^N)\times(0,+\infty),$$
defined for $|L|<L_*(e)$ and $e'\in\Lambda(e)$, such that
\begin{equation}\label{GL0}
G(v_{L,e'},c_{L,e'},L,e')=(0,0).
\end{equation}
After shrinking $\Lambda(e)$ if necessary, Lemma \ref{continuity-homo-wave} ensures that  $c_{0,e'}>0$ and that
$(0,c_{0,e'})$ lies sufficiently close to $(0,c_{0,e})$ for all $e'\in\Lambda(e)$.
Since $G(0,c_{0,e'},0,e')=(0,0)$ by Lemma \ref{identity-G0} again, the local uniqueness
provided by the implicit function theorem implies that the solution branch at $L=0$ is $(0,c_{0,e'})$.
In particular, we have
\begin{equation}\label{converge-vL}
(v_{L,e},c_{L,e})\to(0,c_{0,e}) \quad\hbox{in }\;H^1(\R\times\T^N)\times\R \quad\hbox{as }\;L\to0^+.
\end{equation}
	
For each $0<L<L_*(e)$, define 
\begin{equation}\label{vL-phiL}
\phi_{L,e}(\xi,y)= \phi_{0,e}(\xi) +v_{L,e}(\xi,y) +L(\chi(y)\cdot e)\phi_{0,e}'(\xi),\qquad (\xi,y)\in\R\times\T^N.
\end{equation}
By \eqref{GL0}, the definition of $G$ and \eqref{raltion-K-M0}, the pair $(\phi_{L,e},c_{L,e})$ satisfies
\eqref{front equation} in the weak sense and obeys the normalization
\begin{equation}\label{normalization-proof}
\int_{\R^+\times\T^N}\phi_{L,e}^2=\int_{\R^+}\phi_{0,e}^2.
\end{equation}
We now verify that this pair is a moving pulsating wave of \eqref{equation} in the sense of Definition \ref{defi-pulsating}. Set
$$U_{L,e}(t,x)= \phi_{L,e}(x\cdot e-c_{L,e}t,x/L).$$
Since $c_{L,e}>0$, the change of variables 
$(t,x)\mapsto (\xi,y)=(x\cdot e-c_{L,e}t,x/L)$ is invertible. Therefore, $U_{L,e}$ is an entire weak solution of \eqref{equation}.
Moreover, since $\phi_{L,e}-\phi_{0,e}\in L^2(\R\times\T^N)$ and $0<\phi_{0,e}<1$, the change of variables and
the periodicity in $y$ yield
$$\begin{aligned}
\sup_{(t_0,x_0)\in\R\times\R^N}\int_{t_0-1}^{t_0+1}\int_{B_1(x_0)}|U_{L,e}(t,x)|^2dxdt
\leq C\left(1+\|\phi_{L,e}-\phi_{0,e}\|_{L^2(\R\times\T^N)}^2\right)<+\infty,
\end{aligned} $$
where $C$ is independent of $(t_0,x_0)$.
Since $f(y,0)=0$ and $\partial_u f$ is bounded, local boundedness estimates for parabolic equations imply that $U_{L,e}$ is globally bounded. Parabolic regularity, applied also to the equation satisfied by $\partial_tU_{L,e}$, then gives the
required classical regularity of $\phi_{L,e}$. The corresponding uniform local estimates also imply
that $\phi_{L,e}$ is uniformly continuous.
Consequently, $\phi_{L,e}-\phi_{0,e}$ is uniformly continuous and belongs to $L^2(\R\times\T^N)$, and hence,
$$\sup_{y\in\T^N} |\phi_{L,e}(\xi,y)-\phi_{0,e}(\xi)| \to 0 \quad\hbox{as }\;|\xi|\to+\infty.$$
The limits of $\phi_{0,e}$ therefore yield that $\phi_{L,e}(\xi,y)$ satisfies  \eqref{limit-condition}. Furthermore, the strong parabolic maximum principle together with $f(y,u)>0$ (resp. $f(y,u)<0$) for all $(y,u)\in\R^N\times(-\infty,0)$ (resp. $(y,u)\in\R^N\times(1,+\infty)$) implies that $U_{L,e}$ ranges in $(0,1)$.
By the invertibility of the change of variables, we have $0<\phi_{L,e}<1$ in $\R\times\T^N$. Thus, $(\phi_{L,e},c_{L,e})$ is a moving pulsating wave of \eqref{equation} in the direction $e$.
	
Finally, it follows from \eqref{converge-vL} and \eqref{vL-phiL} that
$$
\|\phi_{L,e}-\phi_{0,e}\|_{H^1(\R\times\T^N)}\leq \|v_{L,e}\|_{H^1(\R\times\T^N)}+ L\|(\chi\cdot e)\phi_{0,e}'\|_{H^1(\R\times\T^N)} \to0\quad \hbox{as }\; L\to0^+.  $$
Together with $c_{L,e}\to c_{0,e}$ as $L\to 0^+$, this proves \eqref{convergence-phiL} for the homogeneous profile $\phi_{0,e}$ normalized by \eqref{normalization0}.
An appropriate shift of any other representative of the homogeneous profile gives the same conclusion, and hence, statement (i) is proved. 

Note that the same construction and verification apply to every $e'\in\Lambda(e)$ with $c_{0,e'}>0$. In particular, by \eqref{vL-phiL}, Lemma \ref{continuity-homo-wave}, and the continuity of the local solution map given by the implicit function theorem,  the following map
$$ [0,L_*(e))\times\Lambda(e) \to H^1(\R\times\T^N)\times\R,\quad 
(L,e')\mapsto \bigl(\phi_{L,e'}-\phi_{0,e'},c_{L,e'}-c_{0,e'}\bigr)$$
is continuous, with value $(0,0)$ at each point $(0,e')$.

Next, we assume that $c_{0,e}\neq0$ for every
$e\in\mathbb S^{N-1}$, and  prove statement (ii). By Lemma \ref{continuity-homo-wave} and compactness, we have
$\min_{e\in\mathbb S^{N-1}}|c_{0,e}|>0$. If $N=1$, there are only two directions,
and hence, statement (ii) follows directly from (i) by taking a common period bound for these two directions.
We may therefore assume that $N\ge2$.
Since $\mathbb{S}^{N-1}$ is connected and
$e\mapsto c_{0,e}$ is continuous and never vanishes,
$c_{0,e}$ has the same sign for all directions.
Applying the reflection \eqref{reflection-wave} to all directions if
necessary, it suffices to prove {\rm (ii)} under
the assumption that $c_{0,e}>0$ for every
$e\in\mathbb{S}^{N-1}$.

For each direction $e$, the construction above provides a neighborhood $\Lambda(e)$, a number $L_*(e)>0$,
and a continuous local family of normalized moving pulsating waves for $0<L<L_*(e)$, $e'\in\Lambda(e)$.
We now verify that these locally defined families agree wherever their parameter domains overlap.
Consider two local waves $(\phi_{L,e}^{(1)},c^{(1)}_{L,e})$ and $(\phi_{L,e}^{(2)},c^{(2)}_{L,e})$ with the same period $L$ and direction $e$. Since the speeds are positive, both waves are classical solutions of \eqref{front equation}, and hence satisfy {\rm (H2)}.
Theorem 1.2, whose proof will be given in Section 5 and is independent of the present theorem, then applies to these two waves and implies that 
$$c^{(1)}_{L,e} =c^{(2)}_{L,e} \quad\hbox{and}\quad \phi_{L,e}^{(1)}(\xi,y) =\phi_{L,e}^{(2)}(\xi+\tau,y) $$
for some $\tau\in\R$. 
Furthermore, since both profiles satisfy \eqref{normalization-proof}, we have $\tau=0$. Thus, the normalized profiles coincide, and the local families agree on every overlap.

By the compactness of $\mathbb S^{N-1}$, choose finitely many directions $e^1,\ldots,e^m$ such that
$\mathbb S^{N-1}=\bigcup_{i=1}^{m}\Lambda(e^i)$, and set
$$ L_*=\frac12\min_{1\le i\le m}L_*(e^i)>0.$$
Since the local families agree on overlaps, they define a single family $(\phi_{L,e},c_{L,e})$ for all
$0<L\le L_*$ and $e\in\mathbb S^{N-1}$.
Moreover, the map
$ (L,e)\mapsto (\phi_{L,e}-\phi_{0,e},c_{L,e}-c_{0,e}) \in H^1(\R\times\T^N)\times\R$
extends continuously to $[0,L_*]\times\mathbb S^{N-1}$, with value $(0,0)$ at every point $(0,e)$.
Since this parameter set is compact, the map is uniformly continuous. Consequently, 
$$ \sup_{e\in\mathbb S^{N-1}} \left(
\|\phi_{L,e}-\phi_{0,e}\|_{H^1(\R\times\T^N)}
+|c_{L,e}-c_{0,e}| \right) \to0 \quad\hbox{as }\;L\to0^+.$$
Thus, the admissible period range is independent of
$e$, and the convergence in \eqref{convergence-phiL}
is uniform with respect to $e$. This proves {\rm (ii)} and completes the proof of Theorem \ref{existence small period}.
\end{proof}


\SE{Sharp convergence rate: Proofs of Theorems \ref{theo-convergence} and \ref{theo-sharp-speed}} 
Throughout this section, we fix a propagation direction $e\in\mathbb{S}^{N-1}$ such that $c_{0,e}\neq0$, and
suppress the dependence on $e$ in the notation introduced in Sections 2--3.
As explained at the beginning of the proof of Theorem \ref{existence small period}, we may assume
without loss of generality that $c_0>0$.

The proofs of Theorems \ref{theo-convergence} and \ref{theo-sharp-speed} share a similar strategy, 
based on constructing higher-order approximations for the wave profile $\phi_L$ 
​in terms of $L$ around $\phi_0$. For the normalized profiles constructed in the proof
of Theorem \ref{existence small period}, we already have
$$\phi_L - \phi_0-L\chi_1 \phi_0'=o(1) \,\,\hbox{ in }\,\, H^{ 1}(\R\times \T^N)\,\, \hbox{ as } \,\,L\to 0^+,$$ 
where $\chi_1$ is defined by \eqref{corrector-chi1}. Here, $L\chi_1\phi_0'$ is the first-order microscopic
corrector. To obtain the $O(L)$ convergence estimates in
Theorem \ref{theo-convergence}, we introduce a second-order corrector.
Under the additional assumption that $f=f(u)$ is independent of $x$, we construct a third-order approximate profile to derive the first-order expansion of the wave speed in Theorem \ref{theo-sharp-speed}.

\subsection{Proof of Theorem \ref{theo-convergence}}
We construct a function $g\in H^1(\R\times\T^N)$, independent of $L$, and prove that
\begin{equation*}
\|\phi_L- \phi_0-L\chi_1\phi_0' -L^2g \|_{H^{ 1}(\R\times \T^N)} +|c_L-c_0| \leq CL.
\end{equation*}	
for all sufficiently small $L>0$, where $C>0$ is independent of $L$.
The function $g(\xi,y)$ will be expressed in terms of the homogeneous wave profile $\phi_0$, the corrector $\chi_1$ defined by \eqref{corrector-chi1},  the solutions to the cell problems \eqref{corrector-chi2}–\eqref{corrector-eta1} and an additional reaction corrector introduced below. We first prove the solvability of these two cell problems.

\begin{lemma}\label{exisence-chi2-eta1}
	There exist unique zero-mean functions $\eta_1, \chi_2 \in C^{ 2}(\T^N)$ satisfying, respectively, \eqref{corrector-eta1} and \eqref{corrector-chi2}. 
\end{lemma}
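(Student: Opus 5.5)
The plan is to treat both cell problems as instances of a single periodic elliptic problem,
\[
\nabla\cdot(A\nabla u)=F \ \hbox{ in }\ \T^N,\qquad \int_{\T^N}u\,dy=0,
\]
with $F$ in divergence-plus-function form. For such $F$, Lax--Milgram on the zero-mean subspace $H^1_\#(\T^N)=\{u\in H^1(\T^N):\int_{\T^N}u=0\}$ gives existence and uniqueness as soon as $F$ satisfies the Fredholm compatibility condition $\int_{\T^N}F\,dy=0$. Coercivity holds because $\int A\nabla u\cdot\nabla u\ge\eta_0\|\nabla u\|_{L^2}^2$, and the Poincar\'e--Wirtinger inequality controls $\|u\|_{L^2}$ on zero-mean functions. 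No symmetry of $A$ is needed, since the bilinear form $\int A\nabla u\cdot\nabla\varphi$ is coercive regardless.

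Uniqueness is then immediate. If $u$ solves the homogeneous problem with zero mean, testing with $u$ gives $\nabla u=0$, so $u$ is constant, and the zero-mean condition forces $u=0$.

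The main step is checking compatibility for each right-hand side.
\begin{itemize}
\item[$\eta_1$:] The right-hand side has zero mean exactly by Lemma \ref{lem-mean-B}, which states $\langle B\rangle_A\cdot e=\int_{\T^N}B\cdot(e+\nabla\chi_1)$.
\item[$\chi_2$:] The term $\nabla\cdot(Ae\chi_1)$ integrates to zero by periodicity. The definition \eqref{a harmonic mean} gives
\[
A^{\rm hom}e\cdot e=\int_{\T^N}A(I_N+\nabla\chi)e\cdot e=\int_{\T^N}A(e+\nabla\chi_1)\cdot e,
\]
using $(\nabla\chi)e=\nabla(\chi\cdot e)=\nabla\chi_1$ since $e$ is constant. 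So the remaining terms also integrate to zero.
\end{itemize}
For the weak formulation, the divergence term is handled as $-\int Ae\chi_1\cdot\nabla\varphi$, which is a bounded functional on $H^1$.

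Regularity is what I expect to be the main obstacle, since it has to be tracked carefully. The plan is to bootstrap with Schauder theory on the torus.
\begin{itemize}
\item[Step 1:] Since $A\in C^2$, the corrector $\chi\in C^{2,\alpha}$. This follows from $\nabla\cdot(A\nabla\chi)=-\nabla\cdot A\in C^1$ and Schauder estimates; the paper already uses $\chi\in C^2$.
\item[Step 2 ($\chi_2$):] The right-hand side involves $\nabla\cdot(Ae\chi_1)=(\nabla\cdot(Ae))\chi_1+Ae\cdot\nabla\chi_1$, which lies in $C^{1}\subset C^{0,\alpha}$. The other terms $A(e+\nabla\chi_1)\cdot e$ are in $C^{1,\alpha}$. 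Writing the operator in nondivergence form, $A:D^2u+(\nabla\cdot A^T)\cdot\nabla u$, has $C^1$ coefficients. Schauder estimates then give $\chi_2\in C^{2,\alpha}\subset C^2$.
\item[Step 3 ($\eta_1$):] With $B\in C^1$ and $\nabla\chi_1\in C^{1,\alpha}$, the right-hand side $\langle B\rangle_A\cdot e-B\cdot(e+\nabla\chi_1)$ is in $C^{0,\alpha}$. The same Schauder argument, starting from the weak $H^1$ solution and passing through $W^{2,p}$ and Sobolev embedding to Hölder, yields $\eta_1\in C^{2,\alpha}$.
\end{itemize}
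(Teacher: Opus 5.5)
Your proposal is correct and follows essentially the same route as the paper: Lax--Milgram on the zero-mean subspace of $H^1(\T^N)$, with compatibility supplied by Lemma \ref{lem-mean-B} for $\eta_1$ and by the definition of $A^{\rm hom}$ plus periodicity for $\chi_2$, followed by elliptic regularity on the torus. Your Schauder bootstrap simply spells out the regularity step that the paper only cites, and the paper makes explicit one point you leave implicit: compatibility is what extends the weak formulation from zero-mean test functions to all of $H^1(\T^N)$, by subtracting the mean of the test function.
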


\begin{proof}
	The proof is standard. As the arguments will be used to handle more complicated cases, we give
	the proof for $\eta_1$ and indicate the corresponding argument for $\chi_2$.
	Consider the space $H^1_0(\mathbb{T}^N)= \{ v \in H^1(\mathbb{T}^N) : \int_{\mathbb{T}^N} v(y)dy = 0 \}$ endowed with the  $H^1$-norm. Define the bilinear form
	$$\mathcal{A}(u,v)= \int_{\mathbb{T}^N} A \nabla u \cdot \nabla vdy\quad\hbox{for }\,\, u,\,v\in H^1_0(\mathbb{T}^N),$$
	and the linear functional
	$$\ell(v)= -\int_{\mathbb{T}^N} ( \langle B \rangle_{A}\cdot e - B \cdot (e+\nabla \chi_1) ) vdy\quad\hbox{for }\,\, v\in H^1_0(\mathbb{T}^N).$$
	By the uniform ellipticity of $A$ and the Poincar\'e inequality, $\mathcal{A}$ is continuous and coercive on $H^1_0(\mathbb{T}^N)$. Moreover, since the right-hand side of \eqref{corrector-eta1} belongs to $L^2(\T^N)$, $\ell$ is a bounded linear functional on $H^1_0(\mathbb{T}^N)$.
	Then, the Lax--Milgram theorem yields a unique $\eta_1\in H^1_0(\T^N)$ such that
	$\mathcal{A}(\eta_1,v) =\ell(v)$ for all  $v\in H^1_0(\mathbb{T}^N)$. 
 Furthermore, by Lemma \ref{lem-mean-B}, 
$$\int_{\T^N}\left(\langle B\rangle_A\cdot e -B\cdot(e+\nabla\chi_1)\right)dy=0.$$
Subtracting the mean from an arbitrary test function in $H^1(\T^N)$ shows that the weak formulation above holds for all $v\in H^1(\T^N)$, and hence $\eta_1$ is a weak solution of \eqref{corrector-eta1}. Finally, the regularity assumptions on $A$ and $B$, together with elliptic regularity on the torus, imply that $\eta_1\in C^2(\T^N)$.

For equation \eqref{corrector-chi2}, the definition of $A^{\rm hom}$ and the periodicity give
$$ \int_{\T^N} \left(A^{\rm hom}e\cdot e -A(e+\nabla\chi_1)\cdot e -\nabla\cdot(Ae\chi_1)\right)dy=0.$$
The same argument therefore gives a unique zero-mean solution $\chi_2\in C^2(\T^N)$.
\end{proof}	

To account for the spatial oscillations of the reaction term, we introduce an additional corrector $\psi$.
For each $\xi\in\R$, consider the cell problem
\begin{equation}\label{solu-eata}
	\nabla_y \cdot (A(y)\nabla_y \psi) = \bar{f}(\phi_0(\xi)) - f(y,\phi_0(\xi)) \quad\hbox{for }\; y\in \T^N,\quad \hbox{and}\quad \int_{\T^N} \psi(\xi,y)dy=0.
\end{equation} 
The following lemma gives its solvability and the regularity needed to prove Theorem \ref{theo-convergence}.

\begin{lemma}\label{regularity-eta}
	For each $\xi\in\R$, problem \eqref{solu-eata} admits a unique solution $\psi(\xi,\cdot)\in C^{ 2}( \T^N)$. Moreover, $\psi\in C^{ 2}(\R \times \T^N)\cap H^{ 2}(\R\times\T^N)$. 
\end{lemma}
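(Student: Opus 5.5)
For each fixed $\xi\in\R$, the right-hand side $F(\xi,y):=\bar f(\phi_0(\xi))-f(y,\phi_0(\xi))$ has zero mean over $\T^N$ by the definition of $\bar f$. We can therefore repeat the Lax--Milgram argument of Lemma \ref{exisence-chi2-eta1}. On $H^1_0(\T^N)$ the form $\int_{\T^N}A\nabla u\cdot\nabla v$ is coercive. The zero mean of $F(\xi,\cdot)$ lets the weak formulation extend from zero-mean test functions to all of $H^1(\T^N)$, which gives a unique zero-mean weak solution $\psi(\xi,\cdot)$. Since $A\in C^2$ and $F(\xi,\cdot)\in C^3(\T^N)$, Schauder theory on the torus gives $\psi(\xi,\cdot)\in C^{2,\alpha}(\T^N)$, with a bound $\|\psi(\xi,\cdot)\|_{C^{2,\alpha}}\le C\|F(\xi,\cdot)\|_{C^{0,\alpha}}$ that is uniform in $\xi$.

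\emph{Solution operator.} Let $\mathcal S$ denote the bounded linear solution operator that sends a zero-mean $h\in C^{0,\alpha}(\T^N)$ (or $L^2(\T^N)$) to the zero-mean solution of $\nabla\cdot(A\nabla u)=h$. It maps $C^{0,\alpha}\to C^{2,\alpha}$ and $L^2\to H^2$. Then $\psi(\xi,\cdot)=\mathcal S F(\xi,\cdot)$.

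\emph{Regularity in $\xi$.} The map $\xi\mapsto F(\xi,\cdot)\in C^{1}(\T^N)$ is $C^2$, because $f\in C^3$ and $\phi_0\in C^5$ (Lemma \ref{continuity-homo-wave}). Its derivatives are
\[
\partial_\xi F=\bigl(\bar f'(\phi_0)-\partial_uf(y,\phi_0)\bigr)\phi_0'
\]
and
\[
\partial_\xi^2F=\bigl(\bar f''(\phi_0)-\partial_u^2f(y,\phi_0)\bigr)\phi_0'^2+\bigl(\bar f'(\phi_0)-\partial_uf(y,\phi_0)\bigr)\phi_0'' ,
\]
and both still have zero mean. Since $\mathcal S$ is linear and bounded, $\partial_\xi^k\psi=\mathcal S\,\partial_\xi^kF$ for $k\le2$. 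Hence $\psi$, $\partial_\xi\psi$ and $\partial_\xi^2\psi$ are continuous in $(\xi,y)$ together with their $y$-derivatives up to order two, because $\mathcal S:C^{0,\alpha}\to C^{2,\alpha}$ is continuous. This gives $\psi\in C^2(\R\times\T^N)$, including the mixed derivatives $\partial_\xi\nabla_y\psi$.

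\emph{$H^2$ integrability, the main point.} We need $F$ and its first two $\xi$-derivatives in $L^2(\R\times\T^N)$. We have $F(\xi,y)=\bar f(\phi_0)-f(y,\phi_0)$, which vanishes when $\phi_0=0$ or $\phi_0=1$ by \eqref{f-zero}. The mean value theorem therefore gives
\[
|F|\le C\min\{\phi_0,1-\phi_0\},
\]
and this decays exponentially as $|\xi|\to\infty$ by Lemma \ref{continuity-homo-wave}. The terms in $\partial_\xi F$ and $\partial_\xi^2F$ carry factors $\phi_0'$ or $\phi_0''$, which also decay exponentially. The $H^2(\T^N)$ estimate
\[
\|\mathcal S h\|_{H^2(\T^N)}\le C\|h\|_{L^2(\T^N)},
\]
applied pointwise in $\xi$ to $h=F,\partial_\xi F,\partial_\xi^2F$ and integrated in $\xi$, bounds $\nabla_y^2\psi$, $\nabla_y\partial_\xi\psi$ and $\partial_\xi^2\psi$ (together with the lower-order terms) in $L^2(\R\times\T^N)$. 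This yields $\psi\in H^2(\R\times\T^N)$. The only care needed is to verify that the $\xi$-derivatives commute with $\mathcal S$, which holds because $\mathcal S$ is bounded and linear and the difference quotients of $F$ converge in $C^{0,\alpha}$.
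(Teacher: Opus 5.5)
Your proof is correct and follows essentially the same route as the paper. You solve the zero-mean cell problem for each $\xi$ by Lax--Milgram and identify $\partial_\xi^k\psi$ ($k\le 2$) with the solutions for $\partial_\xi^k h$ by linearity and uniqueness. You then integrate a pointwise-in-$\xi$ elliptic estimate over $\R$, using the $L^2(\R\times\T^N)$ integrability of $h,\partial_\xi h,\partial_\xi^2 h$, which comes from $f(\cdot,0)=f(\cdot,1)=0$ and the exponential decay of $\phi_0'$ and $\phi_0''$.

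The differences are only in presentation. You get the joint $C^2$ regularity explicitly from the Schauder bound $C^{0,\alpha}\to C^{2,\alpha}$, and you apply the $L^2\to H^2(\T^N)$ estimate to all three $\xi$-derivatives. The paper instead uses $H^1$ energy bounds for $\partial_\xi^k\psi$ and the $H^2$ estimate only for $\psi$ itself.
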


\begin{proof}
Set
$$h(\xi,y)=\bar f(\phi_0(\xi))-f(y,\phi_0(\xi))\quad\hbox{for }\;(\xi,y)\in\R\times\T^N.$$
Since $\phi_0\in C^5(\R)$ and $f\in C^3(\T^N\times[0,1])$, we have
$h\in C^3(\R\times\T^N)$. By the definition of $\bar f$, there holds
\begin{equation}\label{zero-meanhk}
\int_{\T^N}\partial_\xi^k h(\xi,y)dy=0 \quad\hbox{for all }\;\xi\in\R,\,\, 0\le k\le2.
\end{equation}
We first claim that
\begin{equation}\label{L2-hk}
\partial_\xi^k h\in L^2(\R\times\T^N) \quad\hbox{for }\;0\le k\le2.
\end{equation}
Indeed, since $f(y,0)=f(y,1)=\bar f(0)=\bar f(1)=0$, together with the boundedness of $\partial_u f$, $ h\in L^2(\R\times\T^N)$ follows from a similar argument in deriving \eqref{f-L2}. 
Moreover, since 
$\partial_\xi h=(\bar f'(\phi_0)-\partial_u f(y,\phi_0))\phi_0'$, and
$\partial_\xi^2h=(\bar f''(\phi_0)-\partial_u^2f(y,\phi_0))(\phi_0')^2
+(\bar f'(\phi_0)-\partial_u f(y,\phi_0))\phi_0''$, the exponential estimates in the proof of Lemma \ref{continuity-homo-wave} yield that $\partial_\xi^1 h(\xi,y)$ and $\partial_\xi^2 h(\xi,y)$ decay exponentially fast as $\xi\to \pm\infty$ uniformly in $y\in\T^N$. This immediately implies \eqref{L2-hk}. 

By \eqref{zero-meanhk}, the argument used in the proof of Lemma \ref{exisence-chi2-eta1} gives, for each
$\xi\in\R$ and $0\le k\le2$, a unique zero-mean weak solution $\psi_k(\xi,\cdot)\in H^1(\T^N)$ of
\begin{equation}\label{equation-etak}
	\nabla_y \cdot (A(y) \nabla_y (\psi_k(\xi, y))) = \partial_\xi^k h(\xi, y) \quad\hbox{for }\; y\in \mathbb{T}^N, \quad \hbox{and}\quad \int_{\T^N} \psi_k(\xi,y)dy=0
\end{equation}
Here, $\psi_0=\psi$. In particular, elliptic regularity gives $\psi(\xi,\cdot)\in C^2(\T^N)$. 
Since $A$ is independent of $\xi$, by uniqueness and the $C^2$ dependence of $h(\xi,\cdot)$ on $\xi$, we have
\begin{equation}\label{etak-partial-eta}
\psi_k(\xi,\cdot)=\partial_\xi^k\psi(\xi,\cdot) \quad\hbox{for }\;1\le k\leq 2.
\end{equation}
Furthermore, standard elliptic regularity gives 
$\psi\in C(\R;C^2(\T^N))$, $\psi_1\in C(\R;C^1(\T^N))$ and $\psi_2\in C(\R;C(\T^N))$.
Together with \eqref{etak-partial-eta}, these properties show that $\psi\in C^2(\R\times\T^N)$.
		
It remains to prove that $\psi\in H^2(\R\times\T^N)$. For each fixed $\xi\in\R$ and $0\le k\le2$, the
zero-mean condition and the Poincar\'e inequality give
\begin{equation}\label{poincare-in}
\|\psi_k(\xi,\cdot)\|_{L^2(\T^N)} \leq C\|\nabla_y\psi_k(\xi,\cdot)\|_{L^2(\T^N)}, 
\end{equation}
where $C>0$ is independent of $\xi$ and $k$.
Testing \eqref{equation-etak} with $\psi_k(\xi,\cdot)$ and using the uniform ellipticity of $A$, we obtain
$$ \eta_0 \|\nabla_y\psi_k(\xi,\cdot)\|_{L^2(\T^N)}^2
\leq -\int_{\T^N} \partial_\xi^k h(\xi,y)\psi_k(\xi,y)dy
\leq C\|\partial_\xi^k h(\xi,\cdot)\|_{L^2(\T^N)}\|\nabla_y\psi_k(\xi,\cdot)\|_{L^2(\T^N)}.$$
This together with \eqref{poincare-in} implies
$\|\psi_k(\xi,\cdot)\|_{H^1(\T^N)} \leq C\|\partial_\xi^k h(\xi,\cdot)\|_{L^2(\T^N)}$.
Integrating these estimates in $\xi$ and using \eqref{L2-hk}, we obtain
\begin{equation}\label{etak-H1norm}
\sum_{k=0}^2\|\psi_k\|_{L^2(\R\times\T^N)}^2+
\sum_{k=0}^1\|\nabla_y\psi_k\|_{L^2(\R\times\T^N)}^2
\leq C\sum_{k=0}^2\|\partial_\xi^k h\|_{L^2(\R\times\T^N)}^2<+\infty.
\end{equation}
Together with \eqref{etak-partial-eta}, this gives $\psi,\partial_\xi\psi\in H^1(\R\times\T^N)$.
Finally, applying the elliptic $H^2$ estimate on $\T^N$ to \eqref{solu-eata}, for each fixed $\xi$, gives
$$ \|\psi(\xi,\cdot)\|_{H^2(\T^N)} \leq C\left(\|h(\xi,\cdot)\|_{L^2(\T^N)}+\|\psi(\xi,\cdot)\|_{H^1(\T^N)}\right)
\leq C\|h(\xi,\cdot)\|_{L^2(\T^N)},$$ 
where $C$ is independent of $\xi$. Therefore,
$\int_{\R}\|\psi(\xi,\cdot)\|_{H^2(\T^N)}^2d\xi \leq C\|h\|_{L^2(\R\times\T^N)}^2 <+\infty$.
Combining this with \eqref{etak-H1norm}, we conclude that $\psi\in H^2(\R\times\T^N)$. This completes the proof.
\end{proof}

We are now in a position to prove Theorem \ref{theo-convergence}.

\begin{proof}[Proof of Theorem {\rm \ref{theo-convergence}}]
Fix the direction $e$ and assume, as above, that $c_0>0$.
We use the normalized waves constructed in the proof of
Theorem \ref{existence small period}, so that
\begin{equation}\label{recall-G10} 
G(v_L,c_L,L,e)=(0,0),\qquad (v_L,c_L)\to(0,c_0) \quad\hbox{in }\;H^1(\R\times\T^N)\times\R
\end{equation}
as $L\to0^+$. Throughout the proof, $C>0$ is independent of small $L>0$ and may vary from line to line.

Define the $L$-independent function
$$ g(\xi,y) =\eta_1(y)\phi_0'(\xi) +\chi_2(y)\phi_0''(\xi)+\psi(\xi,y).$$
By Lemmas \ref{exisence-chi2-eta1} and
\ref{regularity-eta}, together with $\phi_0'\in H^4(\R)$,
we have $g\in H^2(\R\times\T^N)$. In particular, $g\in\mathcal D_{L,e}$ for every fixed
$L\neq0$. For each $0<L<L_*$, set
$$ w_L=G_1(L^2g,c_0,L,e) =L^2g+M_{c_0,L,e}^{-1}\bigl(K(L^2g,c_0,L,e)\bigr).$$
Thus, $w_L\in\mathcal D_{L,e}$, and since $(\phi_0,c_0)$ is the solution of \eqref{homo-wave}, a direct computation shows that $w_L(\xi,y)$ satisfies
	\begin{equation*}
		\begin{aligned}
			M_{c_0,L,e}(w_L)&= L^2 M_{c_0,L,e}(g) + K(L^2g,c_0,L,e) \vspace{5pt}\\
			&= L^2\tilde{\nabla}_{L,e} \cdot (A\tilde{\nabla}_{L,e} g) 
			+ L^2B \cdot \tilde{\nabla}_{L,e} g 
			+ L^2c_0 \partial_\xi g \vspace{5pt}\\
			&\quad + \left((A(e + \nabla \chi_1)\cdot e - A^{{\rm hom}}e \cdot e) +\nabla \cdot (Ae\chi_1)\right)\phi_0'' 
			+ L(A e \cdot e)\chi_1\phi_0^{(3)} \vspace{5pt}\\
			&\quad + \left(B\cdot(e+\nabla\chi_1) - \langle B \rangle_{A}\cdot e\right)\phi_0' + L(B \cdot e) \chi_1\phi_0'' + Lc_0\chi_1\phi_0'' \vspace{5pt}\\
			&\quad + f(y,\phi_0 + L\chi_1\phi_0' + L^2g) - \bar{f}(\phi_0).
		\end{aligned}
\end{equation*}
Since $\eta_1$, $\chi_2$ and $\psi$ are solutions of \eqref{corrector-eta1}, \eqref{corrector-chi2} and \eqref{solu-eata}, respectively,  we further compute that 
	\begin{equation}\label{Mc0L-estimate}
		M_{c_0,L,e}(w_L)=L R_L(\xi,y) + L^2 S(\xi,y), 
	\end{equation}
	where $S=(Ae\cdot e)\partial_\xi^2g +(B\cdot e+c_0)\partial_\xi g$, and
$$\begin{aligned}
R_L={}&\nabla_y\cdot\bigl(Ae\,\partial_\xi g\bigr) +(A\nabla_y\partial_\xi g)\cdot e+B\cdot\nabla_yg\vspace{5pt}\\
&\qquad +\chi_1\bigl((Ae\cdot e)\phi_0^{(3)} +(B\cdot e+c_0)\phi_0''\bigr)+\frac{f(y,\phi_0+L\chi_1\phi_0'+L^2g)-f(y,\phi_0)}{L}.
\end{aligned}$$

Next, we claim that  
\begin{equation}\label{bound-SRL}	
	\|S\|_{L^2(\R\times\T^N)}+\|R_L\|_{L^2(\R\times\T^N)}\leq C.
\end{equation} 
Indeed, $g\in H^2(\R\times\T^N)$ and the regularity of the coefficients control all the linear terms.
For the nonlinear term, set
$$h_L=\chi_1\phi_0'+Lg\quad\hbox{and}\quad \tilde g_L=f(y,\phi_0+Lh_L)-f(y,\phi_0).$$ 
Then, $\|h_L\|_{L^2(\R\times\T^N)}\le C$, and since $\partial_u f$ is globally bounded, we have 
$\|L^{-1}\tilde g_L\|_{L^2(\R\times\T^N)}\leq \|\partial_u f\|_{L^\infty(\T^N\times\R)} \|h_L\|_{L^2(\R\times\T^N)}\leq C$. 
This proves \eqref{bound-SRL}

Now, by \eqref{Mc0L-estimate}, \eqref{bound-SRL} and Lemma \ref{solvability-M}, we have
$$\|M_{c_0,L,e}(w_L)\|_{L^{2}(\R\times \T^N)}\leq CL\quad\hbox{and}\quad\|w_L\|_{H^{ 1}(\R \times \T^N)} \leq CL.$$ 
Consequently, by \eqref{recall-G10} and the definition of $G_1$, we obtain
	\begin{equation}\label{estimate-G1}
		\|G_1(v_L,c_L,L,e) - G_1(L^2g,c_0,L,e)\|_{H^{ 1}(\R \times \T^N)} \leq CL.
	\end{equation}
For the second component $G_2$, the zero-mean conditions on $\chi_1$, $\eta_1$, $\chi_2$ and $\psi$ imply that
$\int_{\T^N}g(\xi,y)dy=0$ and $\int_{\T^N}h_L(\xi,y)dy=0$.
Using the definition of $G_2$ and the fact that $\phi_0$ is independent of $y$, we therefore obtain
$$G_2(L^2g,c_0,L,e) =\int_{\R^+\times\T^N}\bigl(2L\phi_0h_L+L^2h_L^2\bigr)d\xi dy=L^2\int_{\R^+\times\T^N}h_L^2d\xi dy.$$
Since $G_2(v_L,c_L,L,e)=0$, this yields
\begin{equation}\label{estimate-G2}
	|G_2(v_L,c_L,L,e)-G_2(L^2g,c_0,L,e)|\leq CL^2.
\end{equation}

It remains to convert these residual estimates into an estimate for $(v_L-L^2g,c_L-c_0)$. To this end,
we equip $H^1(\R\times\T^N)\times\R$ with the sum norm
and denote the corresponding operator norm by
$\|\cdot\|_{\mathcal L}$. By Lemma \ref{continuity-operotor}, the operator
$Q_0:=D_{(v,c)}G(0,c_0,0,e)$ is invertible, and $D_{(v,c)}G$ is continuous at $(0,c_0,0,e)$ in the operator norm.
For $t\in[0,1]$, define
$$ Q_{L,t} =D_{(v,c)}G\bigl(tv_L+(1-t)L^2g,tc_L+(1-t)c_0,L,e\bigr).$$
Since $(v_L,c_L)\to(0,c_0)$ as $L\to 0^+$, and $g$ is independent of $L$, all points on this line segment converge to
$(0,c_0,0,e)$ uniformly in $t\in[0,1]$. Thus, there exists $L_1\in(0,L_*)$ such that
$$\sup_{0\le t\le1}\|Q_{L,t}-Q_0\|_{\mathcal L} \leq\frac{1}{2\|Q_0^{-1}\|_{\mathcal L}}
\quad\hbox{for all }\;0<L<L_1.$$
Applying the mean value inequality along this segment, we obtain
$$\begin{aligned}
&\Bigl\|G(v_L,c_L,L,e)-G(L^2g,c_0,L,e)-Q_0(v_L-L^2g,c_L-c_0)\Bigr\|_{H^1(\R\times\T^N)\times\R}\\
\leq {}& \frac{\|v_L-L^2g\|_{H^1(\R\times\T^N)}+|c_L-c_0|} {2\|Q_0^{-1}\|_{\mathcal L}}.
\end{aligned}$$
Multiplying by $\|Q_0^{-1}\|_{\mathcal L}$ and absorbing the resulting half of the unknown norm gives 
$$\|v_L-L^2g\|_{H^1(\R\times\T^N)}+|c_L-c_0|\leq 2\|Q_0^{-1}\|_{\mathcal L} \|G(v_L,c_L,L,e)-G(L^2g,c_0,L,e)\|_{H^1(\R\times\T^N)\times\R}.
$$
It then follows from \eqref{estimate-G1} and \eqref{estimate-G2} that
$$\|v_L-L^2g\|_{H^1(\R\times\T^N)}+|c_L-c_0|\leq CL.$$
Finally, by the relation of $v_L$ and $\phi_L$ in \eqref{vL-phiL}, we have
$$
\|\phi_L-\phi_0\|_{H^1(\R\times\T^N)} \leq\|v_L-L^2g\|_{H^1(\R\times\T^N)} +L\|\chi_1\phi_0'\|_{H^1(\R\times\T^N)}
	+L^2\|g\|_{H^1(\R\times\T^N)}\leq CL.
$$
This completes the proof of Theorem \ref{theo-convergence}. 
\end{proof}


\subsection{Proof of Theorem \ref{theo-sharp-speed}}   

We now assume that $f=f(u)$ is independent of $x$, and derive an asymptotic expansion of the wave speed $c_L$ as $L\to0^+$. As explained above, this requires a third-order approximation of the wave profile $\phi_L$ with respect to $L$. To derive this approximation, in addition to the correctors $\chi_1$, $\chi_2$ and $\eta_1$ introduced in \eqref{corrector-chi1}-\eqref{corrector-eta1}, we consider the following cell problems:
\begin{equation}\label{corrector-chi3}
	\nabla \cdot (A \nabla \chi_3)  =d_3-  \nabla \cdot (A e \chi_2) - A \nabla \chi_2 \cdot e - A e \cdot e \chi_1 + A^{{\rm hom}}e \cdot e \chi_1 \,\,	\hbox{ in } \,\, \T^N,  
\end{equation} 
\begin{equation}\label{corrector-eta2} 
	\nabla \cdot (A \nabla \eta_2) = d_2 - \nabla \cdot (A e \eta_1) - A \nabla\eta_1 \cdot e - B \cdot e \chi_1 - B \cdot \nabla \chi_2 + \langle B \rangle_{A}\cdot e \chi_1\,\,	\hbox{ in } \,\, \T^N,  
\end{equation}            
and
\begin{equation}\label{corrector-eta3}
	\nabla \cdot (A \nabla \eta_3)  = d_1- B \cdot \nabla \eta_1\,\,	\hbox{ in } \,\, \T^N,  
\end{equation} 	                                    
with the normalization 
$$\int_{\T^N}\chi_3dy= \int_{\T^N}\eta_2dy=\int_{\T^N}\eta_3dy=0.$$
Here, $d_1$, $d_2$ and $d_3$ are the constants given by \eqref{formula-d123}.
Their definitions, together with the zero-mean condition for $\chi_1$ and periodicity, imply that the right-hand
sides of \eqref{corrector-chi3}--\eqref{corrector-eta3} have zero mean over $\T^N$.
The argument used in the proof of Lemma \ref{exisence-chi2-eta1} therefore gives unique
zero-mean solutions $\chi_3,\eta_2,\eta_3\in C^2(\T^N)$.

The following lemma simplifies the coefficients $d_1,d_2,d_3$ when $A$ is symmetric.

\begin{lemma}\label{lem-D012}
Assume that the diffusion matrix $A$ is symmetric, and let $\tilde{d}_1$, $\tilde{d}_2$ be the constants defined by \eqref{simple-tildec12}. Then, $d_1=\tilde{d}_1$, $d_2=\tilde{d}_2$ and $d_3=0$.   
\end{lemma}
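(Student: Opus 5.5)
The plan is to verify the three identities directly by repeated integration by parts on $\T^N$. Four ingredients will be used throughout: the cell problems \eqref{Acorrector} (in the form $\nabla\cdot(A(e+\nabla\chi_1))=0$, i.e. $\nabla\cdot(Ae)=-\nabla\cdot(A\nabla\chi_1)$), \eqref{Bcorrector}, \eqref{corrector-chi2} and \eqref{corrector-eta1}; the zero-mean normalizations of $\chi_1,\chi_2,\eta_1$; and, crucially, the symmetry $A=A^T$. Symmetry gives $A\nabla u\cdot\nabla v=A\nabla v\cdot\nabla u$, so $\int_{\T^N}\nabla\cdot(A\nabla u)\,v=\int_{\T^N}\nabla\cdot(A\nabla v)\,u$. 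All functions involved are $C^2$ (Lemma \ref{exisence-chi2-eta1}), so every integration by parts is classical.

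\emph{Step 1: $d_3=0$.} I will compute $\int A\nabla\chi_2\cdot e=\int\nabla\chi_2\cdot Ae=-\int\chi_2\nabla\cdot(Ae)=\int\chi_2\nabla\cdot(A\nabla\chi_1)$. By symmetry this equals $\int\chi_1\nabla\cdot(A\nabla\chi_2)$. Substituting \eqref{corrector-chi2}, the constant $A^{\rm hom}e\cdot e$ drops out because $\int\chi_1=0$, and I get $-\int\chi_1 A(e+\nabla\chi_1)\cdot e+\int Ae\chi_1\cdot\nabla\chi_1$. By symmetry, $Ae\cdot\nabla\chi_1=A\nabla\chi_1\cdot e$, so the terms containing $\nabla\chi_1$ cancel. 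What remains is $-\int(Ae\cdot e)\chi_1$, which exactly cancels the first term of $d_3$.

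\emph{Step 2: $d_1=\tilde d_1$.} Write $\int B\cdot\nabla\eta_1=-\int(\nabla\cdot B)\eta_1=-\int\nabla\cdot(A\nabla\zeta)\,\eta_1$, using \eqref{Bcorrector} with $A^T=A$. By symmetry this is $-\int\zeta\,\nabla\cdot(A\nabla\eta_1)$. Inserting \eqref{corrector-eta1} gives $\int\zeta\,B\cdot(e+\nabla\chi_1)$, since $\int\zeta=0$ kills the constant $\langle B\rangle_A\cdot e$.

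\emph{Step 3: $d_2=\tilde d_2$.} Since $\int B\cdot\nabla(\chi_1^2/2)=\int\chi_1B\cdot\nabla\chi_1$, the identity reduces to showing $\int A\nabla\eta_1\cdot e=-\int\chi_1 B\cdot(e+\nabla\chi_1)$. As in Step 1, $\int A\nabla\eta_1\cdot e=\int\eta_1\nabla\cdot(A\nabla\chi_1)=\int\chi_1\nabla\cdot(A\nabla\eta_1)$. Using \eqref{corrector-eta1} and $\int\chi_1=0$, this equals $-\int\chi_1B\cdot(e+\nabla\chi_1)$, as required.

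I do not expect a real obstacle. The only care needed is to track exactly where symmetry of $A$ is used: transposing $A\nabla u\cdot e$ into $\nabla u\cdot Ae$, and swapping $u,v$ in the Green-type identity. In Step 1 one must also check that the leftover $(Ae\cdot e)\chi_1$ term comes with the right sign to cancel.
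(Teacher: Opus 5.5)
Your proposal is correct and follows the paper's proof essentially step by step: the same integration-by-parts chains, using $A=A^T$ both to rewrite $A\nabla u\cdot e$ as $\nabla u\cdot Ae$ and to swap the roles of $u$ and $v$ in Green's identity, combined with the cell problems for $\chi_1$, $\zeta$, $\chi_2$, $\eta_1$ and the zero-mean normalizations; the only difference is that you verify $d_1=\tilde d_1$ before $d_2=\tilde d_2$, whereas the paper does the reverse. One small imprecision is that $\zeta$ is only known to be $C^1$, not $C^2$, but the step involving $\nabla\cdot(A\nabla\zeta)$ needs only the weak form of \eqref{Bcorrector}, so the argument is unaffected.
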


\begin{proof}
We first prove $d_3=0$. By the definition of $\chi_1$, $\nabla \cdot (A \nabla \chi_1) =- \nabla \cdot (Ae)$ in $\T^N$. Then, since $A$ is symmetric, integrating by parts gives  
\begin{equation*}
	\begin{aligned}	\int_{\T^N}A \nabla \chi_2 \cdot edy=\int_{\T^N}\nabla \chi_2 \cdot (A e) dy&= - \int_{\T^N} \nabla \cdot (Ae) \chi_2dy=\int_{\T^N} \nabla \cdot (A \nabla \chi_1) \chi_2dy\\
		&=-\int_{\T^N} A \nabla \chi_1 \cdot \nabla \chi_2dy= \int_{\T^N} \nabla \cdot (A \nabla \chi_2) \chi_1dy. 
	\end{aligned}
\end{equation*}
Using the equation for $\chi_2$ in \eqref{corrector-chi2} and the fact that $\chi_1$ has zero mean, we further get 
\begin{equation*}
	\begin{aligned}
		\int_{\T^N}A \nabla \chi_2 \cdot e dy
		&= \int_{\T^N} \left(A^{{\rm hom}}e \cdot e \chi_1 - A(\nabla \chi_1 + e) \cdot e \chi_1 - \nabla \cdot (Ae\chi_1) \chi_1\right)dy \\
		&= \int_{\T^N} \left(- A \nabla \chi_1 \cdot e \chi_1 - A e \cdot e \chi_1 - \nabla \cdot (Ae\chi_1) \chi_1 \right)dy \\
		&= \int_{\T^N} \left(- A\nabla \chi_1 \cdot e \chi_1 - Ae \cdot e \chi_1 + A e \chi_1 \cdot \nabla \chi_1\right)dy . 
	\end{aligned}
\end{equation*}
By the symmetry of $A$ again, we have $A\nabla \chi_1 \cdot e \chi_1=A e \chi_1 \cdot \nabla \chi_1$. By the definition of $d_3$ in \eqref{formula-d123}, this proves $d_3=0$. 

Similarly, by the symmetry of $A$, integrating by parts and using the equation of $\chi_1$, we get
\begin{equation*}
	\begin{aligned}
		\int_{\T^N} A \nabla \eta_1 \cdot e dy
		= -\int_{\T^N} \nabla \cdot (Ae) \eta_1 dy
		&= \int_{\T^N}\nabla \cdot (A \nabla \chi_1) \eta_1 dy\\
		&=-\int_{\T^N} (A \nabla \chi_1) \cdot \nabla \eta_1 dy= \int_{\T^N}\nabla \cdot (A \nabla \eta_1) \chi_1 dy.
	\end{aligned}
\end{equation*}
By the equation for $\eta_1$ in \eqref{corrector-eta1}, it then follows that
$$	\int_{\T^N} A \nabla \eta_1 \cdot e dy= -\int_{\T^N}B \cdot (\nabla \chi_1 + e) \chi_1 dy.$$
This, together with  the definition of $\tilde{d}_2$, gives that
$$d_2=\int_{\T^N} \left(A \nabla \eta_1 \cdot e + B \cdot e \chi_1 + B \cdot \nabla \chi_2\right)dy=\int_{\T^N} \left(-(B\cdot \nabla\chi_1) \chi_1+ B\cdot \nabla \chi_2   \right)dy=\tilde{d}_2. $$

Finally, using the equation satisfied by $\zeta$ in \eqref{Bcorrector} and the equation satisfied by $\eta_1$ in \eqref{corrector-eta1}, together with the symmetry of $A$, we obtain
\begin{equation*}
	\begin{aligned}
		d_1=	\int_{\T^N}  B \cdot \nabla \eta_1dy &= -\int_{\T^N} \left(\nabla \cdot B\right) \eta_1 dy = -\int_{\T^N} \nabla \cdot (A \nabla \zeta) \eta_1 dy= \int_{\T^N}  (A \nabla \zeta) \cdot \nabla \eta_1 dy  \\
		& = -\int_{\T^N} \nabla \cdot (A \nabla \eta_1) \zeta dy= \int_{\T^N} B \cdot (\nabla \chi_1 + e) \zeta dy=\tilde{d}_1.
	\end{aligned}
\end{equation*}
The proof of Lemma \ref{lem-D012} is thus complete.
\end{proof}

Next, we turn to the linear operator 
$H: H^2(\R)\to L^2(\R)$ defined in \eqref{define-He}, and its $L^2$-adjoint $H^*$. By Lemma \ref{properties of H},  ${\rm Ker}(H)=\R\phi_0'$, and 
${\rm Ker}(H^*)=\R w$, where $w\in H^2(\R)$ is given by \eqref{adjoint-kernel}.
Moreover, the range of $H$ is closed in $L^2(\R)$.
Thus, $H$ is a Fredholm operator of index zero with range $R(H)=\{h\in L^2(\R): \int_{\R}h(\xi)w(\xi) d\xi=0\}$. 
By the definition of $c_1$ in \eqref{solvability condition-1},
we have
$$\int_{\R} \left(d_3\phi_0'''(\xi)+d_2\phi_0''(\xi)+(d_1+c_1)\phi_0'(\xi)\right)
w(\xi)d\xi=0.$$
The Fredholm alternative therefore yields the following
solvability result.

\begin{lemma}\label{solvable-varphi}
The following equation 
\begin{equation}\label{solvability equation}
H(\varphi)=-\left(d_3\phi_0'''+d_2\phi_0'' + d_1 \phi_0' + c_1\phi_0' \right)\quad\hbox{in }\,\,\R 
\end{equation}
is solvable in $H^2(\R)$. Furthermore, every solution in $H^2(\R)$ can be written
uniquely as
\begin{equation}\label{determine-varphi}
	\varphi=\varphi_0+\mu_0\phi_0',
\end{equation}
for some $\mu_0\in\R$, where $\varphi_0\in H^2(\R)$ is the unique solution of
\eqref{solvability equation} satisfying
$\int_{\R}\varphi_0(\xi)w(\xi) d\xi=0$.
\end{lemma}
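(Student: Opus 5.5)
The proof is a direct application of the Fredholm structure of $H$ given by Lemma \ref{properties of H}, in four steps.

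\textbf{Step 1: the right-hand side lies in $L^2(\R)$.} Denote the right-hand side of \eqref{solvability equation} by $r:=-(d_3\phi_0'''+d_2\phi_0''+(d_1+c_1)\phi_0')$. By Lemma \ref{continuity-homo-wave}, the derivatives $\phi_0^{(k)}$, $1\le k\le 5$, decay exponentially, so $r\in L^2(\R)$; in fact $r\in H^2(\R)$.

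\textbf{Step 2: orthogonality to $w$.} We check that $w\in L^2(\R)$, so that $\int_\R r w$ is meaningful. The adjoint kernel $w=\phi_0'\exp(\kappa\xi)$, with $\kappa=(\langle B\rangle_A\cdot e+c_0)/(A^{\rm hom}e\cdot e)$, lies in $H^2(\R)$; this is stated in Lemma \ref{properties of H}, as ${\rm Ker}(H^*)\subset H^2(\R)$. By the definition \eqref{solvability condition-1} of $c_1$, we have $\int_\R r w\,d\xi=0$. This uses $\int_\R\phi_0'w\neq0$, which holds because $\phi_0'w=(\phi_0')^2e^{\kappa\xi}>0$ and $\phi_0'$ is strictly negative.

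\textbf{Step 3: existence.} Since the range of $H$ is closed, $R(H)=({\rm Ker}\,H^*)^\perp=\{h:\int hw=0\}$. Hence $r\in R(H)$, and there is some $\varphi\in H^2(\R)$ with $H\varphi=r$.

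\textbf{Step 4: uniqueness of the decomposition.} Set $\varphi_0=\varphi-\mu\phi_0'$ with
$$\mu=\frac{\int\varphi w}{\int\phi_0'w}.$$
Then $H\varphi_0=r$ because $\phi_0'\in{\rm Ker}\,H$, and $\int\varphi_0 w=0$. The function $\varphi_0$ is unique: if two solutions both satisfy the orthogonality condition, their difference lies in ${\rm Ker}\,H=\R\phi_0'$, say it equals $a\phi_0'$. Orthogonality then gives $a\int\phi_0'w=0$, so $a=0$. Finally, any solution $\varphi$ satisfies $\varphi-\varphi_0\in{\rm Ker}\,H$, so $\varphi-\varphi_0=\mu_0\phi_0'$ for a unique $\mu_0$, which is \eqref{determine-varphi}.

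The only step requiring care is justifying the identification $R(H)=({\rm Ker}\,H^*)^\perp$. This uses closedness of the range together with the fact that $H^*$ is the Hilbert-space adjoint of $H$ on the domain $H^2(\R)$, and both facts are provided by Lemma \ref{properties of H}.
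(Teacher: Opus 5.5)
Your proposal is correct and follows the paper's own argument: the paper likewise combines ${\rm Ker}(H)=\R\phi_0'$, ${\rm Ker}(H^*)=\R w$ and the closedness of the range to identify $R(H)=\{h\in L^2(\R):\int_\R hw\,d\xi=0\}$, gets the orthogonality directly from the definition of $c_1$, and uses $\int_\R\phi_0'w\,d\xi>0$ to fix the normalization and the unique decomposition $\varphi=\varphi_0+\mu_0\phi_0'$. One small correction: Lemma \ref{properties of H} does not itself state that the formal adjoint $H^*$ on $H^2(\R)$ is the Hilbert-space adjoint of $H$; this is a standard elliptic-regularity fact, which the paper also uses without comment.
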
	

\begin{proof}
The existence follows from the orthogonality condition above
and the Fredholm alternative. Since ${\rm Ker}(H)=\R\phi_0'$, any two solutions of \eqref{solvability equation}
differ by a multiple of $\phi_0'$. Furthermore, \eqref{adjoint-kernel} gives 
$\int_{\R}\phi_0'(\xi)w(\xi) d\xi>0$. Hence, the orthogonality condition uniquely determines $\varphi_0$, and every solution admits the unique representation \eqref{determine-varphi}.
\end{proof}

Finally, every $H^2(\R)$ solution $\varphi$ of \eqref{solvability equation} belongs to $C^4(\R)$ and, together with its derivatives up to order $4$, decays exponentially as $\xi\to\pm\infty$.
Indeed, since $f\in C^3([0,1])$ and $\phi_0^{(k)}$ decays exponentially for $1\le k\leq 5$,
the right-hand side of \eqref{solvability equation} is of class $C^2$ and decays exponentially together
with its derivatives. The equation therefore implies that $\varphi\in C^4(\R)$.
Furthermore, $\varphi\in H^2(\R)$ implies that $\varphi(\xi)\to0$ as $\xi\to\pm\infty$, while the
zeroth-order coefficient $f'(\phi_0)$ of $H$ approaches $f'(1)<0$ as $\xi\to-\infty$ and $f'(0)<0$ as
$\xi\to+\infty$. Comparison with exponential barriers on the two half-lines gives exponential decay of $\varphi$.
Local estimates for \eqref{solvability equation} then give exponential decay of $\varphi'$ and
$\varphi''$, and differentiating the equation twice gives the same property for $\varphi^{(3)}$ and $\varphi^{(4)}$.

Based on the above preparations, we are now ready to prove Theorem  \ref{theo-sharp-speed}.
                                                                                          
\begin{proof}[Proof of Theorem {\rm \ref{theo-sharp-speed}}] 
As in the preceding proof, we assume $c_0>0$. For each small $L>0$, set
\begin{equation*}
p_L(\xi,y) = \phi_0(\xi) + L\phi_1(\xi,y) + L^2\phi_2(\xi,y) + L^3\phi_3(\xi,y),
\end{equation*}
where
\begin{equation*}\left\{
		\begin{aligned}
			\phi_1(\xi,y) &= \chi_1(y)\phi_0'(\xi) + \varphi(\xi), \vspace{5pt}\\
			\phi_2(\xi,y) &= \chi_2(y)\phi_0''(\xi) + \eta_1(y)\phi_0'(\xi) + \chi_1(y)\varphi'(\xi), \vspace{5pt}\\
			\phi_3(\xi,y) &= \chi_3(y)\phi_0'''(\xi) + \eta_2(y)\phi_0''(\xi) + \eta_3(y)\phi_0'(\xi) + \chi_2(y)\varphi''(\xi) + \eta_1(y)\varphi'(\xi). 
		\end{aligned}\right.
	\end{equation*}
Here, $\varphi=\varphi_0+\mu_0\phi_0'$ is the solution in \eqref{determine-varphi}, with
\begin{equation}\label{choose-mu0}
	\mu_0= 2\phi_0^{-2}(0) \int_{\R^+} \varphi_0(\xi) \phi_0(\xi)d\xi. 
\end{equation} 
This choice of $\mu_0$ gives
$$
\int_{\R^+}\phi_0\varphi d\xi =\int_{\R^+}\phi_0\varphi_0 d\xi+\mu_0\int_{\R^+}\phi_0\phi_0'\,d\xi 
=\int_{\R^+}\phi_0\varphi_0 d\xi-\frac{\mu_0}{2}\phi_0(0)^2=0.$$
Moreover, since both $\phi_0'$ and $\varphi$ belong to $H^4(\R)$, we have $p_L - \phi_0 - L\chi_1 \phi_0'\in H^2(\R\times\T^N)\subset \mathcal D_{L,e}$ for every fixed $L\neq0$.

In the following differential identities, $O(L^2)$ denotes a function in $L^2(\R\times\T^N)$ whose norm
is bounded by $CL^2$ with $C>0$ independent of $L$.  Using the equations for $\chi_1$, $\chi_2$ and $\chi_3$, a direct computation yields that
\begin{equation*}
\begin{aligned}
\tilde{\nabla}_L \cdot (A \tilde{\nabla}_L p_L) =& (A^{{\rm hom}}e \cdot e)\phi_0''+\nabla_y\cdot(A\nabla_y\eta_1)\phi_0'
+L(d_3+(A^{{\rm hom}}e \cdot e)\chi_1)\phi_0''' \\
&\quad +L\left( \nabla_y \cdot (A \nabla_y \eta_2)+  \nabla_y\cdot (A e \eta_1) + A \nabla_y\eta_1 \cdot e \right)\phi_0'' \\
&\quad + L\nabla_y\cdot(A\nabla_y\eta_3)\phi_0' +L(A^{{\rm hom}}e \cdot e)\varphi''
+L\nabla_y \cdot (A \nabla_y \eta_1)\varphi'+O(L^2). 
\end{aligned}
\end{equation*}
It is also straightforward to compute that
\begin{equation*}
	\begin{aligned}
B \cdot\tilde{\nabla}_L p_L = B\cdot(\nabla_y\chi_1+ e) \phi_0'&+LB\cdot(\nabla_y\chi_2+\chi_1e )\phi_0''\\
&+ LB\cdot \nabla_y\eta_1\phi_0' +LB\cdot\left(\nabla_y\chi_1+e\right)\varphi'+O(L^2).
\end{aligned}
\end{equation*}
Since $f=f(u)$ is independent of $x$, we have $\bar{f}=f$.  
Differentiating equation \eqref{homo-wave} gives 
$$(A^{{\rm hom}}e \cdot e)\phi_0'''+(\langle B \rangle_{A}\cdot e+c_0)\phi_0''+f'(\phi_0)\phi_0'=0\quad\hbox{in }\,\,\R.$$ 
Then, by Taylor's expansion, 
\begin{equation*}
\begin{aligned}
f(p_L)-f(\phi_0)=&Lf'(\phi_0)\phi'_0\chi_1+Lf'(\phi_0)\varphi+O(L^2) \\
=& -L\left( (A^{{\rm hom}}e \cdot e)\phi_0'''+(\langle B \rangle_{A}\cdot e+c_0)\phi_0'' \right)\chi_1+Lf'(\phi_0)\varphi+O(L^2).
\end{aligned}
\end{equation*}
Combining the above and using the equations for $\eta_1$, $\eta_2$ and $\eta_3$, we obtain
\begin{equation*}
\begin{aligned}
&\quad\tilde{\nabla} \cdot (A(y) \tilde{\nabla} p_L) + B(y) \cdot \tilde{\nabla} p_L + f(p_L) \\
&= -c_0 \phi_0'+L\left(d_3\phi_0'''+d_2\phi_0''+d_1\phi_0'+H(\varphi)-c_0\partial_\xi\phi_1\right)+O(L^2).
\end{aligned}
\end{equation*}
By Lemma \ref{solvable-varphi}, we have 
$ H(\varphi)=-(d_3\phi_0'''+d_2\phi_0''+(d_1+c_1)\phi_0')$, 
where $c_1$ is the constant defined in \eqref{solvability condition-1}.
Hence, 
\begin{equation}\label{simple-eq-pL}
\tilde{\nabla} \cdot (A(y) \tilde{\nabla} p_L) + B(y) \cdot \tilde{\nabla} p_L + f(p_L) 
= -c_0 \phi_0'- L\left( c_0 \partial_{\xi}\phi_1+c_1\phi_0'\right) + O(L^2).
\end{equation}

For sufficiently small $L>0$, we have $c_0+Lc_1\ge c_0/2>0$. Recall that $v_{L}\in H^{ 1}(\R\times \T^N)$ is the function arising from the implicit function argument in the proof of Theorem \ref{existence small period}, and that $(G_1,G_2)(v_L,c_L,L,e) = (0,0)$, where the function $(G_1,G_2)$ is defined in \eqref{defi-G}. 
Define 
\begin{equation*}
	w_L(\xi,y) = G_1(p_L - \phi_0 - L\chi_1 \phi_0', c_0 + Lc_1,L,e). 
\end{equation*}  
Since $p_L-\phi_0-L\chi_1\phi_0'\in H^2(\R\times\T^N)\subset\mathcal D_{L,e}$, we also have $w_L\in\mathcal D_{L,e}$.
By the definitions of $G_1$, $K$ and $M_{c,L,e}$, we compute that    
\begin{equation*}
	\begin{aligned}
		M_{c_0 + Lc_1, L ,e}(w_L)
		&= 	M_{c_0 + Lc_1, L ,e}(p_L - \phi_0 - L\chi_1 \phi_0')+K(p_L - \phi_0 - L\chi_1 \phi_0',c_0+Lc_1,L,e) \\
		&= \tilde{\nabla}_{L} \cdot (A \tilde{\nabla}_{L}p_L)
		+ B \cdot \tilde{\nabla}_{L}p_L	+ (c_0 + Lc_1)\partial_\xi p_L
		+ f(p_L).
	\end{aligned}
\end{equation*}
It further follows from \eqref{simple-eq-pL} that
$$M_{c_0 + Lc_1, L ,e}(w_L)= (c_0 + Lc_1)\partial_\xi (p_L - \phi_0)
-Lc_0 \partial_\xi \phi_1 + O(L^2)= O(L^2). $$
Using the energy estimate in Lemma \ref{solvability-M}, we obtain $\|w_L\|_{H^1(\R \times \T^N)} \leq CL^2$, and hence, 
\begin{equation}\label{estimate-G1+}
\|G_1(p_L - \phi_0 - L\chi_1 \phi_0', c_0 + Lc_1,L,e)-G_1(v_L,c_L,L,e)\|_{H^1(\R\times \T^N)} \leq CL^2.
\end{equation}
On the other hand, by the definition of $G_2$, and the fact that $G_2(v_L,c_L,L,e) =0$, we have
\begin{equation*}
	\begin{aligned}
		\left|G_2(p_L-\phi_0-L\chi_1\phi_0',c_0+Lc_1,L,e) - G_2(v_L,c_L,L,e)\right|=&\int_{\R^+ \times \T^N} \left(p_L^2 - \phi_0^2\right) d\xi  dy \\
	 =& \int_{\R^+ \times \T^N} 
	 \left(2L\phi_1\phi_0 + O(L^2)\right)d\xi dy.
	\end{aligned}
\end{equation*}
The choice of $\mu_0$ in \eqref{choose-mu0} then yields 
\begin{equation}\label{estimate-G2+}
|G_2(p_L-\phi_0-L\chi_1\phi_0',c_0+Lc_1,L,e) - G_2(v_L,c_L,L,e)| \leq CL^2. 
\end{equation} 
As a consequence,  both $(v_L,c_L)$ and $(p_L-\phi_0-L\chi_1\phi_0',c_0+Lc_1)$
converge to $(0,c_0)$ in $H^1(\R\times\T^N)\times\R$ as $L\to0^+$.
By Lemma \ref{continuity-operotor}, the quantitative inverse estimate proved at the end of the proof of
Theorem \ref{theo-convergence} applies to these two
points. Using \eqref{estimate-G1+} and
\eqref{estimate-G2+}, we obtain
\begin{equation*}
	\|p_L - \phi_0 - L\chi_1 \phi_0'- v_L\|_{L^2(\R \times \T^N)} + |c_L - c_0 - Lc_1| \leq CL^2.
\end{equation*}
This immediately gives the asymptotic expansion $c_L=c_0+c_1L+O(L^2)$ with $c_1$ given by \eqref{solvability condition-1}. 

Finally, in the case where $A$ is symmetric, Lemma \ref{lem-D012} implies 
$$ c_1=-  \tilde{d}_1
	- \tilde{d}_2 
	\int_{\R}\phi_0''(\xi)w(\xi)d\xi\left(\int_{\R}\phi_0'(\xi)w(\xi)d\xi\right)^{-1}. $$
Integration by parts gives
$$
\int_\R\phi_0''(\xi)w(\xi)d\xi=-\frac{\langle B\rangle_A\cdot e+c_0}{2A^{\rm hom}e\cdot e}\int_\R\phi_0'wd\xi, $$
where the boundary term vanishes by the exponential decay of $\phi_0'$ and $w$. Consequently, 
we obtain the simplified formula for $c_1$ in \eqref{solvability conditio2}. This completes the proof of Theorem \ref{theo-sharp-speed}.
\end{proof}


\SE{Uniqueness of pulsating waves: Proof of Theorem \ref{theo-unique}}

Throughout this section, $L>0$ and $e\in\mathbb S^{N-1}$ are fixed.
After the change of variables $x=Ly$, we may take $L=1$, replacing
$A$ and $B$  by $L^{-2}A$ and $L^{-1}B$, respectively. We keep the same notation for the rescaled quantities
and suppress the dependence on $L$ and $e$.
Let $U(t,x)=\phi(x\cdot e-ct,x)$ be the pulsating wave supplied by {\rm (H2)}, and write
$\tilde\nabla=e\partial_\xi+\nabla_y$. In particular, $(\phi,c)$ is a classical solution of the profile equation
\eqref{front equation}. Consequently,
$\phi(x\cdot e-ct+\tau,x)$ is an entire solution of
\eqref{equation} for every $\tau\in\R$, including when $c=0$.

The proof of Theorem \ref{theo-unique} is divided into two cases, according to whether a moving or standing pulsating wave of \eqref{equation} exists. In both cases, we use the monotonicity property of moving waves. 

\begin{lemma}\label{mono-lem}
Assume that $\tilde{U}(t,x)=\tilde{\phi}(x\cdot e-\tilde{c}t, x)$ is a pulsating wave of \eqref{equation} with $\tilde{c}\neq 0$. Then, $\partial_{\xi}\tilde{\phi}(\xi,y)<0$ in $\R\times \T^N$. 
\end{lemma}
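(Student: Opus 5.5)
We prove the monotonicity with a sliding argument in time, working with the entire solution $\tilde U(t,x)=\tilde\phi(x\cdot e-\tilde c t,x)$ and the rescaled coefficients ($L=1$). Since $\tilde c\neq0$, parabolic regularity makes $\tilde U$ classical and bounded. Moreover $\tilde\phi$ is recovered from $\tilde U$ through \eqref{phi-U}, so $\partial_\xi\tilde\phi(\xi,y)=-\tilde c^{-1}\partial_t\tilde U$ evaluated at the corresponding point. By the reflection \eqref{reflection-wave} we may assume $\tilde c>0$. The claim then reduces to showing $\partial_t\tilde U>0$ in $\R\times\R^N$.

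\textbf{Step 1 (ordering for large shifts).} For $s\ge0$ set $\tilde U^s(t,x)=\tilde U(t+s,x)=\tilde\phi(x\cdot e-\tilde ct-\tilde cs,x)$. This is again an entire solution, and in the $(\xi,y)$ variables it is simply $\tilde\phi(\xi-\tilde cs,y)$. Choose $M>0$ with $\tilde\phi\ge1-\delta_0$ on $\{\xi\le -M\}$ and $\tilde\phi\le\delta_0$ on $\{\xi\ge M\}$; this is possible by \eqref{limit-condition}. Fix a shift $s$ with $\tilde c s\ge 2M$ and compare $\tilde\phi(\cdot-\tilde cs,\cdot)$ with $\tilde\phi$ on three regions.
\begin{itemize}
\item On $\xi\in[-M,M]$ we have $\xi-\tilde cs\le -M$, so $\tilde\phi(\xi-\tilde cs,y)\ge1-\delta_0$. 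For $s$ large it is in fact close to $1>\max_{[-M,M]\times\T^N}\tilde\phi$, which exceeds $\tilde\phi(\xi,y)$ there.
\item On $\xi\le -M$ both functions lie in $[1-\delta_0,1]$, where $f$ is decreasing by \eqref{strong stability}.
\item On $\xi\ge M$, $\tilde\phi\le\delta_0$, where again $f$ is decreasing.
\end{itemize}
On the two half-cylinders we apply the standard maximum principle in the $\delta_0$-regions, i.e.\ the Fife--McLeod/Berestycki--Hamel argument. Concretely, we take $\epsilon^*=\sup(\tilde\phi-\tilde\phi(\cdot-\tilde cs,\cdot))^+$ on the half-cylinder. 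If $\epsilon^*>0$, it is attained (or approached, using the uniform limits and compactness of $\T^N$) at an interior point, where the decreasing nonlinearity contradicts the equation \eqref{front equation}. This equation holds classically because $\tilde c\neq0$. Hence $\tilde U^s\ge\tilde U$ for all large $s$.

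\textbf{Step 2 (sliding down).} Let $s^*=\inf\{s_0\ge0:\tilde U^s\ge\tilde U\ \forall s\ge s_0\}$ and suppose $s^*>0$. By continuity $\tilde U^{s^*}\ge\tilde U$. Two cases arise.
\begin{itemize}
\item If $\inf_{[-M,M]\times\T^N}(\tilde\phi(\xi-\tilde cs^*,y)-\tilde\phi(\xi,y))>0$, then the compact-middle strict inequality persists for $s$ slightly below $s^*$. Step 1's half-cylinder argument then gives ordering for those $s$, contradicting the minimality of $s^*$.
\item Otherwise the infimum is zero and, by compactness, is attained. The strong maximum principle applied to $\tilde U^{s^*}-\tilde U\ge0$ gives $\tilde U^{s^*}\equiv\tilde U$. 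Then $\tilde\phi$ is $\tilde cs^*$-periodic in $\xi$, contradicting the distinct limits at $\pm\infty$.
\end{itemize}
So $s^*=0$, and $\tilde U$ is nondecreasing in $t$, i.e.\ $\partial_\xi\tilde\phi\le0$.

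\textbf{Step 3 (strictness).} The function $\partial_t\tilde U\ge0$ solves the linearized parabolic equation and is not identically zero. The strong maximum principle therefore gives $\partial_t\tilde U>0$ everywhere, whence $\partial_\xi\tilde\phi<0$ on $\R\times\T^N$ via \eqref{phi-U}.

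The main obstacle is the half-cylinder comparison in Step 1 with possibly unattained suprema. I would handle it by taking translated limits of near-maximizing sequences in $(\xi,y)$ along $\xi$, using the uniform limits of $\tilde\phi$ as $|\xi|\to\infty$ and classical parabolic estimates to pass to a limit solution at which the maximum is attained.
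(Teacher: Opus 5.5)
Your proof is correct and is the standard sliding argument the paper relies on. The paper does not write it out but cites \cite{x92,x2} (or \cite[Theorem 1.11]{bh12}); your three-region comparison in the $\delta_0$-stable zones, followed by sliding to the critical shift and the strong maximum principle, mirrors what the paper itself carries out in Lemma~\ref{nonzerouniqueness}.

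The obstacle you flag at the end is not a real one. On each half-cylinder the difference is nonpositive on $\{\xi=\pm M\}$, for every $s\ge 0$ and not only when $\tilde c s\ge 2M$, and it tends to $0$ uniformly as $|\xi|\to\infty$. So any positive supremum is attained at an interior point, by compactness of $\T^N$, and no translated limits are needed.
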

\begin{proof}
Since $\tilde{\phi}(x\cdot e-\tilde{c}t, x)$ is a moving pulsating wave, a standard sliding argument yields 
the strict monotonicity of $\tilde{\phi}$ in its first variable (see e.g. \cite{x92,x2}). Alternatively, this strict monotonicity follows from an application of  \cite[Theorem 1.11]{bh12}.
\end{proof}

In the following lemma, we assume the existence of a moving pulsating wave of \eqref{equation} in the direction $e$, and show that the pulsating wave in condition (H2) is identically equal to this wave up to a time shift. 

\begin{lemma}\label{nonzerouniqueness} 
Assume that \eqref{equation} admits a moving pulsating wave $\tilde{U}(t,x)=\tilde{\phi}(x\cdot e-\tilde{c}t, x)$, i.e., $\tilde{c}\neq 0$. Then $\tilde{c}= c$, and there exists $\tau_0\in\R$ such that 
$ \tilde{\phi}(\xi,y) \equiv \phi(\xi+ \tau_0,y)$ in $\R \times \T^N$.
\end{lemma}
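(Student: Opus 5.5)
The plan is a sliding/comparison argument in the original variables $(t,x)$, in the spirit of Fife--McLeod and of the uniqueness proofs in \cite{x2,dhz1}. The comparison is between the moving wave $\tilde U$ and the family of entire solutions $U^\tau(t,x):=\phi(x\cdot e-ct+\tau,x)$, $\tau\in\R$. These are entire solutions by (H2), even when $c=0$.

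\textbf{Step 1: a priori tools.} First, I will use that near the stable states $0$ and $1$ the equation admits super- and subsolutions of the form $u(t,x)\pm q\,\me^{-\delta t}$, with $q\le\delta_0$ small, valid in the regions where $u\le\delta_0$ or $u\ge1-\delta_0$; this comes from \eqref{strong stability}. Second, since $\tilde\phi$ is strictly decreasing in $\xi$ by Lemma~\ref{mono-lem}, there is a compact middle region $\{|\xi|\le R\}$ on which $\partial_\xi\tilde\phi\le-\kappa<0$. From these two facts I will build the standard Fife--McLeod pair
\[
\tilde U\bigl(t\pm\sigma q(1-\me^{-\delta t}),x\bigr)\pm q\me^{-\delta t}.
\]
It is a super/subsolution of \eqref{equation} for $\sigma$ large, because the time shift in the monotone direction absorbs the error $|\partial_uf|q$ in the middle region.

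\textbf{Step 2: speed equality.} Suppose $c\ne\tilde c$, say $\tilde c>c$ (the case $\tilde c<c$ is symmetric). Fix $\tau$ large, so that $U^{\tau}(0,\cdot)\ge\tilde U(0,\cdot)-q$. The limits \eqref{limit-condition} and the monotonicity of $\tilde\phi$ give this. Comparison with the Fife--McLeod subsolution then yields
\[
U^\tau(t,x)\ge\tilde U(t-\sigma q,x)-q\me^{-\delta t}\quad\text{for } t\ge0.
\]
Now evaluate at $x=Ly$ with $x\cdot e=ct+a$. The left side is $\phi(a+\tau,y)$, which is fixed. The right side is $\tilde\phi(a-(\tilde c-c)t+\tilde c\sigma q,y)-q\me^{-\delta t}$, which tends to $1$ as $t\to\infty$. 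Choosing $a$ so that $\phi(a+\tau,\cdot)<1/2$ gives a contradiction. If instead $\tilde c<c$, I use the reverse comparison, with the roles of sub- and supersolution exchanged. In the standing case $c=0$, $\tilde c\ne0$, the same argument works, because $U^\tau$ is a genuine stationary solution by (H2). Hence $\tilde c=c$. In case (ii), where $c=0$, this already shows that $\tilde c=0$ for any second wave. The statement of Lemma~\ref{nonzerouniqueness} only concerns $\tilde c\ne0$, so this gives $c=\tilde c\ne0$ there.

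\textbf{Step 3: profile identity by sliding.} Now $c=\tilde c\ne0$, so both waves are moving, and Lemma~\ref{mono-lem} also gives $\partial_\xi\phi<0$. I work directly on $\R\times\T^N$, where both profiles solve \eqref{front equation}, an elliptic equation because $c\ne0$ in the moving frame. Define
\[
\tau^*=\inf\{\tau:\ \phi(\xi+\tau',y)\le\tilde\phi(\xi,y)\ \text{for all } \tau'\ge\tau\}.
\]
The set is nonempty: for $\tau$ large, the tails lie in the stable zones, and the maximum principle in the half-cylinders where both functions are below $\delta_0$ or above $1-\delta_0$ handles them. The middle compact region is handled by the limits. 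Then $\phi(\cdot+\tau^*,\cdot)\le\tilde\phi$.

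If $\inf(\tilde\phi-\phi(\cdot+\tau^*,\cdot))>0$ on a compact middle region, then uniform continuity and the same tail maximum principle allow me to decrease $\tau^*$, contradicting its minimality. Otherwise the difference vanishes somewhere, either at a point or along a sequence. By the strong maximum principle for the parabolic equation satisfied by $U^{\tau^*}-\tilde U$ in the $(t,x)$ variables, the difference is identically zero, which gives $\tilde\phi\equiv\phi(\cdot+\tau_0,\cdot)$. The point here is that \eqref{front equation} is only degenerate elliptic, so the strong maximum principle should be applied to the pulled-back solutions rather than to the profiles.

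\textbf{Main obstacle.} I expect the main obstacle to be the tail comparison in Step 3. The minimum of the difference may be approached only as $|\xi|\to\infty$, so it has to be excluded using the stability constant $\gamma_0$ via a maximum principle on half-cylinders. That maximum principle must be done in the $(t,x)$ variables, again because of the degeneracy in $(\xi,y)$. I would handle this by comparing $U^{\tau}$ and $\tilde U$ on the moving half-spaces $\{x\cdot e-ct\ge R\}$ and $\{x\cdot e-ct\le -R\}$, using the exponential-in-time decay of $\max(U^\tau-\tilde U,0)$ there.
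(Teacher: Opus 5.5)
Your argument is correct, and your Step 3 is essentially the paper's sliding argument. You reach $\tilde c=c$, however, by a genuinely different route.

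\textbf{How the paper gets $\tilde c=c$.} The paper builds no Fife--McLeod pair. Assuming $c\ge\tilde c$, it observes that $\tilde\phi$ satisfies the speed-$c$ version of \eqref{front equation} up to the signed defect $(c-\tilde c)\partial_\xi\tilde\phi\le0$. Hence every shift $\tilde\phi(\cdot-\tau,\cdot)$ is a supersolution for speed $c$, and the paper slides it down onto $\phi$ directly on $\R\times\T^N$. Both profiles have the same limits as $\xi\to\pm\infty$ uniformly in $y$, so any positive supremum of $\phi-\tilde\phi(\cdot-\tau,\cdot)$ is attained. At an attained maximum the first-order terms vanish, and the second-order part is the trace of a positive semidefinite matrix against a negative semidefinite Hessian, so the degeneracy of \eqref{front equation} is harmless there. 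At the touching point of the optimal shift this gives $(c-\tilde c)\partial_\xi\tilde\phi\ge0$, hence $c=\tilde c$, and the parabolic strong maximum principle in $(t,x)$ finishes.

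\textbf{What each route buys.} The paper gets speed and profile uniqueness from one static comparison, with no large-time analysis. In exchange it genuinely uses that both profiles are classical solutions on the whole cylinder, which is exactly what (H2) supplies. Your Step 2 costs more: you need the uniform bound $\partial_\xi\tilde\phi\le-\kappa$ on the middle region and the sub/supersolution pair. But it is dynamical and more robust. The initial ordering can equally be arranged by shifting the moving wave $\tilde U$ in time, so your argument yields $c=\tilde c$ without using that the (H2) profile solves \eqref{front equation} off the slice. This is the same mechanism as Step 1 in the proof of Lemma \ref{zerouniqueness}.

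\textbf{Your Step 3.} The obstacle you anticipate does not really arise, for the attainment reason above. The tail comparison can therefore be done at the maximum point on the cylinder. Your moving half-space argument with decay like $\me^{-\gamma_0 t}$ also works, with more effort.

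\textbf{Minor slips.} With $U^\tau(t,x)=\phi(x\cdot e-ct+\tau,x)$, the ordering $U^\tau(0,\cdot)\ge\tilde U(0,\cdot)-q$ needs $\tau$ very negative, not large. The direction of the time shift in your Fife--McLeod pair depends on the sign of $\tilde c$. It is cleaner to shift in $\xi$, i.e.\ to use $\tilde\phi(x\cdot e-\tilde ct+\sigma q(1-\me^{-\delta t}),x)-q\me^{-\delta t}$. Finally, on a compact middle region the infimum is attained, so touching always occurs at a point, never only along a sequence.
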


\begin{proof}
Without loss of generality, we assume that $c \geq \tilde{c}$ and prove that $c = \tilde{c}$.  The case $c \leq \tilde{c}$ can be treated analogously and will be briefly discussed at the end of the proof. 

By the definition of pulsating waves, there exists a large constant $N_0>0$ such that 
\begin{equation}\label{phi-delta0}
\min\{\phi,\, \tilde{\phi} \} > 1-\delta_0\, \hbox{ in }\, (-\infty,-N_0) \times \T^N,  
\quad \hbox{ and } \quad 
\max\{\phi,\, \tilde{\phi} \} < \delta_0 \,\hbox{ in } \,(N_0,+\infty) \times \T^N,
\end{equation}
 where $\delta_0\in (0,1/2)$ is given by \eqref{strong stability}.
Let $\tau > 0$ be sufficiently large such that
$\phi(\cdot,\cdot) \leq  \tilde{\phi}(\cdot-\tau,\cdot)$ in $[-N_0,N_0] \times \T^N$.
We first claim that 
\begin{equation}\label{comp-phi-tilde}
\phi(\cdot,\cdot) \leq  \tilde{\phi}(\cdot-\tau,\cdot)\, \,\hbox{ in }\,\, \R \times \T^N.
\end{equation}
In fact, assume by contradiction that there exists $(\xi_0,y_0) \in [-N_0,N_0]^c \times \T^N$ such that \begin{equation}\label{sup-phi-tilde}
	\phi(\xi_0,y_0) - \tilde{\phi}(\xi_0-\tau,y_0) = \sup_{\R \times \T^N} \left(\phi(\cdot,\cdot) - \tilde{\phi}(\cdot-\tau,\cdot)\right) > 0.
\end{equation}
Then, $\partial_{\xi} (\phi(\xi, y) - \tilde{\phi}(\xi-\tau,y))|_{(\xi_0, y_0)}=0$, $\nabla_y (\phi(\xi, y) - \tilde{\phi}(\xi-\tau,y))|_{(\xi_0, y_0)}=0$, and the Hessian matrix $D^2_{(\xi,y)} (\phi (\xi,y) - \tilde{\phi}(\xi - \tau, y))|_{(\xi_0, y_0)}$ is negative semi-definite. 
Since $A(\cdot)$ satisfies \eqref{uniform-elliptic}, the above yields 	 
\begin{equation*}
	\begin{aligned}
		&\tilde{\nabla}\cdot
		(A(y)\tilde{\nabla}\phi)\big|_{(\xi_0,y_0)}
		-
		\tilde{\nabla}\cdot
		(A(y)\tilde{\nabla}\tilde{\phi})
		\big|_{(\xi_0-\tau,y_0)}
		\\
		&\quad
		={\rm tr}\left(
		M_e^T A(y_0)M_e
		D^2_{(\xi,y)}
		(\phi(\xi,y)-\tilde{\phi}(\xi-\tau,y))
		\right)\big|_{(\xi_0,y_0)}
		\leq 0,
	\end{aligned}
\end{equation*}
where $M_e$ is an $N\times(N+1)$ matrix given by
$M_e=(e,I_N)$. Hence, 
\begin{equation*}
	\tilde{\nabla}\cdot
	(A(y)\tilde{\nabla}\phi)\big|_{(\xi_0,y_0)}
	\leq
	\tilde{\nabla}\cdot
(A(y)\tilde{\nabla}\tilde{\phi})
	\big|_{(\xi_0-\tau,y_0)}.
\end{equation*}
Moreover, since $\partial_u f(y,u) < 0$ whenever $u \in (0,\delta_0) \cup (1-\delta_0,1)$, 
by \eqref{phi-delta0} and \eqref{sup-phi-tilde}, we have 
$f(y_0,\phi(\xi_0,y_0))< f(y_0,\tilde{\phi}(\xi_0-\tau,y_0))$. Furthermore, by condition (H2), $(\phi,c)$ is a classical solution of \eqref{front equation}. Since we are considering the case $\tilde{c}\neq 0$, the pair $(\tilde{\phi},\tilde{c})$ is also a classical solution of \eqref{front equation}. Therefore, we compute that
\begin{equation}\label{sliding}
\begin{aligned}
0 &= \tilde{\nabla} \cdot (A(y) \tilde{\nabla} \phi) 
+ B(y) \cdot \tilde{\nabla} \phi 
+ c \partial_\xi \phi
+ f(y,\phi) \big|_{(\xi_0,y_0)} \\
&< \tilde{\nabla} \cdot (A(y) \tilde{\nabla} \tilde{\phi}) 
+ B(y) \cdot \tilde{\nabla} \tilde{\phi} 
+ c \partial_\xi \tilde{\phi}
+ f(y,\tilde{\phi}) \big|_{(\xi_0-\tau,y_0)} =(c-\tilde{c})\partial_\xi \tilde{\phi}(\xi_0-\tau,y_0).
\end{aligned}
\end{equation}
This gives $(c-\tilde{c})\partial_\xi \tilde{\phi}(\xi_0-\tau,y_0)>0$, which is impossible, since $\partial_\xi \tilde{\phi}<0$ in $\R\times \T^N$ by Lemma \ref{mono-lem}, and we have assumed that $c\geq \tilde{c}$. Therefore, \eqref{comp-phi-tilde} is proved. 

Define now 
$$\tau_0 = \inf \left\{\tau:\ \phi(\xi,y) \leq \tilde{\phi}(\xi-\tau,y) \hbox{ for all } (\xi,y) \in \R \times \T^N\right\}.$$
Clearly, $\tau_0\leq \tau$, and $\tau_0>-\infty$ because $\phi(\xi,y)>0=\tilde{\phi}(+\infty,y)$. We claim that 
there exists $(\xi_*,y_*)\in\R\times\T^N$ such that 
\begin{equation}\label{phi-phit*}
\phi(\xi_* ,y_*) = \tilde{\phi}(\xi_*-\tau_0,y_*).
\end{equation}
Otherwise, by the continuity of $\phi$ and $\tilde{\phi}$, one could slightly decrease $\tau_0$ to preserve the inequality $\phi(\xi,y) \leq \tilde{\phi}(\xi- \tau_0,y)$ in $[-N_0,N_0] \times \T^N$, and hence in the whole space by the same sliding argument used above, contradicting the definition of $\tau_0$. 
Consequently, \eqref{phi-phit*} holds, and we have   
\begin{equation}\label{phi-phit-leq}
\phi(\xi,y) \leq \tilde{\phi}(\xi-\tau_0,y)\,\,\hbox{ in }\,\,\R\times\T^N.
\end{equation} 
Thanks to \eqref{phi-phit*} and \eqref{phi-phit-leq}, a similar computation to \eqref{sliding} yields that $(c-\tilde{c})\partial_\xi \tilde{\phi}(\xi_*-\tau_0,y_*)\geq 0$. Since $\partial_{\xi}\tilde{\phi}<0$ in $\R\times\T^N$, 
this implies $c=\tilde{c}$.

Recall that we have assumed $\tilde{c}\neq 0$. Hence, $c\neq 0$ as well. A direct verification shows that the following two functions
$$u(t,x):=U(t-(\xi_*-y_*\cdot e)/c,x)= \phi(x\cdot e +\xi_*-y_*\cdot e -ct,x)$$ 
and
 $$\tilde{u}(t,x):=\tilde{U}(t-(\xi_*-y_*\cdot e-\tau_0)/\tilde{c},x) =\tilde{\phi}(x\cdot e+\xi_*-y_*\cdot e-\tau_0-\tilde{c}t,x) $$
are entire solutions of \eqref{equation}. Moreover, by \eqref{phi-phit*}, \eqref{phi-phit-leq} and $c=\tilde{c}$, we have 
\begin{equation*}
	u(0,y_*)=\tilde{u}(0,y_*),\quad \hbox{and}\quad u(t,x) \leq \tilde{u}(t,x) \, \hbox{ for }\, (t,x)\in\R\times\R^N.
\end{equation*}
The parabolic strong maximum principle therefore implies $u(\cdot,\cdot)\equiv \tilde{u}(\cdot,\cdot)$ in $\R\times\R^N$, and hence, we obtain  $\phi(\cdot,\cdot) \equiv \tilde{\phi}(\cdot - \tau_0,\cdot)$ in $\R \times \T^N$.

Finally, the case $c \leq \tilde{c}$ can be treated similarly by starting from sufficiently large negative $\tau$
such that
$\phi(\cdot,\cdot) \geq  \tilde{\phi}(\cdot-\tau,\cdot)$ in $\R \times \T^N$, 
and then sliding $\tau$ up to its supremum  $\tilde{\tau}_0$, which also gives $c=\tilde{c}$ and
$\phi(\cdot,\cdot) \equiv \tilde{\phi}(\cdot - \tilde{\tau}_0,\cdot)$. The proof of Lemma \ref{nonzerouniqueness} is thus complete. 
\end{proof}

Next, we consider the case where equation \eqref{equation} admits a standing pulsating wave in the direction $e$. Since the profile of such a standing wave may lack regularity and therefore may not be a classical solution of \eqref{front equation}, the argument used in the previous lemma cannot be applied. In the following lemma, we develop a different sliding argument to prove the uniqueness.

\begin{lemma}\label{zerouniqueness}
Assume that \eqref{equation} admits a standing pulsating wave $\tilde{U}(x)=\tilde{\phi}(x\cdot e, x)$. Then $c=0$. Furthermore, if $\tilde{\phi}\in C(\R\times\T^N)$, then there exists $\tau_0\in\R$ such that 
$\tilde{U}(x) \equiv \phi(x\cdot e + \tau_0,x)$ in $\R^N$.
\end{lemma}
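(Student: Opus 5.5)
The plan is to run two sliding arguments directly in the original variable $x$, using the one-parameter family
$$V_\tau(x):=\phi(x\cdot e+\tau,x),\qquad \tau\in\R,$$
built from the wave supplied by {\rm (H2)}. Because $(\phi,c)$ is a classical solution of \eqref{front equation} on all of $\R\times\T^N$, a direct computation gives, for every $\tau$,
$$\nabla\cdot(A\nabla V_\tau)+B\cdot\nabla V_\tau+f(x,V_\tau)=-c\,\partial_\xi\phi(x\cdot e+\tau,x).$$
Here $L=1$ after the rescaling, and the coefficients are evaluated at $x$. The family is compared with the stationary solution $\tilde U$.

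\emph{Step 1: $c=0$.} Suppose $c>0$. Then $(\phi,c)$ is a moving wave, so Lemma \ref{mono-lem} gives $\partial_\xi\phi<0$, and each $V_\tau$ is a strict stationary subsolution. Choose $N_0$ as in \eqref{phi-delta0}, now for $\phi$ and $\tilde U$, where for $\tilde U$ the bounds are required on $\{x\cdot e<-N_0\}$ and $\{x\cdot e>N_0\}$.
\begin{itemize}
\item For $\tau$ large, $V_\tau\le\tilde U$ on the slab $\{|x\cdot e|\le N_0\}$. Outside the slab one argues as in the proof of Lemma \ref{nonzerouniqueness}: at an almost-maximum of $V_\tau-\tilde U$, the stable-zone monotonicity of $f$ gives a contradiction.
\item The domain is not compact in $x$, so I will always pass to the limit along lattice translates $x\mapsto x+z_k$, $z_k\in\Z^N$. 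This uses the periodicity of $\phi$ in $y$ and uniform $C^{2,\alpha}$ bounds on the stationary solutions $\tilde U(\cdot+z_k)$. The limits are again stationary solutions with the same uniform limits at $x\cdot e\to\pm\infty$.
\item Let $\tau^*=\inf\{\tau: V_\tau\le\tilde U\}$. Because the translated limits are again ordered stationary objects, the same stable-zone argument shows the supremum of $V_{\tau^*}-\tilde U$ cannot escape to $|x\cdot e|\to\infty$. So either there is a touching point, possibly for a translated limit pair, or the inequality is strict with a positive gap on the slab.
\item A positive gap lets us decrease $\tau^*$, contradicting its definition.
\item At a touching point, subtracting the equations gives $-c\,\partial_\xi\phi\le 0$ there, which contradicts $\partial_\xi\phi<0$.
\end{itemize}
Hence $c\le 0$. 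Sliding from below with supersolutions, or applying the reflection \eqref{reflection-wave}, gives $c\ge0$, so $c=0$.

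\emph{Step 2: identification when $\tilde\phi$ is continuous.} Now $c=0$, so every $V_\tau$ is an exact stationary solution. This is exactly where (H2) on the full cylinder is used. Repeat the sliding with $\tau^*=\inf\{\tau:V_\tau\le\tilde U\}$. The same slab and stable-zone argument excludes escape to $|x\cdot e|\to\infty$, so there is a touching point, possibly only along lattice translates $z_k$.
\begin{itemize}
\item If the touching point is attained, the elliptic strong maximum principle gives $V_{\tau^*}\equiv\tilde U$, so $\tau_0=\tau^*$.
\item If it is attained only along $z_k$, write $\tilde U(x+z_k)=\tilde\phi(x\cdot e+s_k,x)$ with $s_k=z_k\cdot e$, and $V_{\tau^*}(x+z_k)=V_{\tau^*+s_k}(x)$. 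Renormalize so that the touching point stays in a compact set; then the $s_k$ stay bounded. Continuity of $\tilde\phi$ on $\R\times\T^N$ is what guarantees $\tilde\phi(x\cdot e+s_k,x)\to\tilde\phi(x\cdot e+s,x)$ locally uniformly. The strong maximum principle then yields $\phi(x\cdot e+\tau^*+s,x)\equiv\tilde\phi(x\cdot e+s,x)$ in $\R^N$.
\end{itemize}

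The main obstacle is converting this identity with the extra shift $s$ back to $\tilde U$ itself. I would handle it by showing that $s\mapsto \tilde\phi(x\cdot e+s,x)-\phi(x\cdot e+\tau^*+s,x)$ vanishes identically in $x$ for the limit value $s$. I would then use the ordering $V_{\tau^*}\le\tilde U$, which is preserved under all translates and their limits, together with this equality to propagate the identity to $s=0$. If this propagation fails, the fallback is to argue that a touching point for $\tilde U$ itself must exist, using the strict gap and continuity in $s$, because otherwise $\tau^*$ could be decreased.
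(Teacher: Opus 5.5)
\textbf{There is a genuine gap at the last step of Step 2.} You correctly reach the identity $\phi(x\cdot e+\tau^*+s,x)=\tilde\phi(x\cdot e+s,x)$ for all $x\in\R^N$, which is the analogue of the identity the paper obtains with $\xi_0\leftrightarrow\tau^*+s$ and $\tilde\xi_0\leftrightarrow s$. You then flag the conversion back to $\tilde U$ itself as the main obstacle and leave it unresolved, and neither remedy you suggest works.

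\begin{itemize}
\item The ordering $V_{\tau^*}\le\tilde U$ gives no way to move the slice parameter from $s$ to $0$.
\item The fallback fails. Touching only along the translates $z_k$ means that $\inf_{|x\cdot e|\le N_0}(\tilde U-V_{\tau^*})=0$ without being attained. So there is no uniform gap on the slab and $\tau^*$ cannot be decreased. A pointwise strict inequality $\tilde U>V_{\tau^*}$ does not contradict the minimality of $\tau^*$.
\end{itemize}

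The missing idea is a one-line use of lattice periodicity. The identity holds for every $x\in\R^N$, so evaluate it at $x=\hat x-z_k$ for a fixed $\hat x$. By $\Z^N$-periodicity of both profiles in $y$, this reads $\phi(\hat x\cdot e-z_k\cdot e+s+\tau^*,\hat x)=\tilde\phi(\hat x\cdot e-z_k\cdot e+s,\hat x)$. Since $z_k\cdot e\to s$, continuity of $\phi$ and of $\tilde\phi$ on $\R\times\T^N$ gives $\phi(\hat x\cdot e+\tau^*,\hat x)=\tilde\phi(\hat x\cdot e,\hat x)=\tilde U(\hat x)$. Hence $\tilde U\equiv V_{\tau^*}$ and $\tau_0=\tau^*$. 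Note that the continuity hypothesis on $\tilde\phi$ is used here a second time, not only for convergence of the translates. Geometrically, the slice $\{(x\cdot e+s,\,x \bmod \Z^N):x\in\R^N\}$ is invariant under $s\mapsto s-z\cdot e$ for $z\in\Z^N$, and $s-z_k\cdot e\to0$. This is exactly how the paper closes the argument.

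\textbf{The rest of your plan is sound.} Step 1 takes a valid, slightly different route from the paper.
\begin{itemize}
\item The paper slides in time the parabolic subsolution $\phi(x\cdot e-ct+\tau,x)-\varepsilon$. The $-\varepsilon$ shift keeps the near-touching point at bounded $x\cdot e$ and, via strict subsolution in the stable zones, in the middle range. It then lets $\varepsilon\to0$ and gets a contradiction because $\tilde V$ is time-independent.
\item You instead use that for $c>0$ the stationary function $V_\tau$ is a strict subsolution everywhere, since $-c\,\partial_\xi\phi>0$. So any touching, even for a lattice-translated limit pair, is immediately contradictory, and your slab dichotomy replaces the $\varepsilon$-regularization. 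This is arguably simpler.
\item One detail to add: without the $-\varepsilon$, the initial ordering for large $\tau$ needs $\inf_{|x\cdot e|\le N_0}\tilde U>0$. This follows from your lattice-translate compactness together with the strong maximum principle, or from Harnack's inequality.
\end{itemize}
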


\begin{proof}
We first introduce some notations. Choose a small constant $\varepsilon_0$ satisfying 
\begin{equation}\label{choose-varepsilon}
0<\varepsilon_0 < \min\left\{\delta_0/2,\,\gamma_0\right\},   
\end{equation}
where $\delta_0\in (0,1/2)$ and $\gamma_0>0$ are the constants provided by \eqref{strong stability}.  For any $\varepsilon \in (0, \varepsilon_0/4)$ and $\tau\in\R$, define
\begin{equation}\label{define-underU}
	\underline{U}^{\varepsilon,\tau}(t,x)=\phi(x \cdot e - ct + \tau,x)-  \varepsilon \,\,\hbox{ for }\,\, (t,x)\in \R\times\R^N.
\end{equation}
We claim that $\underline{U}^{\varepsilon,\tau}(t,x)$ satisfies
\begin{equation}\label{under-u-sub}
	\begin{aligned}
		\partial_t \underline{U}^{\varepsilon,\tau} \leq \nabla \cdot (A(x) \nabla \underline{U}^{\varepsilon,\tau}) 
		+ B(x) \cdot \nabla \underline{U}^{\varepsilon,\tau} &+ f(x,\underline{U}^{\varepsilon,\tau}) - \varepsilon^2 \\
		& \hbox{whenever }\,\, \underline{U}^{\varepsilon,\tau} \geq 1 - \varepsilon_0\,\, \hbox{ or }\,\, \underline{U}^{\varepsilon,\tau} \leq \varepsilon_0.
	\end{aligned}
\end{equation}
Indeed, since $\phi(x \cdot e - ct + \tau,x)$ is an entire solution of \eqref{equation}, we have
$$\partial_t \underline{U}^{\varepsilon,\tau}= \nabla \cdot (A(x) \nabla \underline{U}^{\varepsilon,\tau})
+ B(x) \cdot \nabla \underline{U}^{\varepsilon,\tau} +f(x,\phi(x \cdot e - ct + \tau,x)) \,\,\hbox{ in }\,\, \R\times\R^N. $$
For any $(t,x)\in\R\times\R^N$ such that $\underline{U}^{\varepsilon,\tau}(t,x) \leq \varepsilon_0$, there holds
$ 0<\phi(x \cdot e - ct + \tau,x) = \underline{U}^{\varepsilon,\tau}(t,x) + \varepsilon
\leq 2\varepsilon_0$, 
whence by \eqref{strong stability}, \eqref{extension} and \eqref{choose-varepsilon}, we have
\begin{equation*}
	f(x,\phi(x \cdot e - ct + \tau,x)) - f(x,\underline{U}^{\varepsilon,\tau}) \leq   -\gamma_0 \varepsilon \leq -\varepsilon^2.
\end{equation*}
This immediately yields the desired inequality. 
Similarly, for any $(t,x)\in\R\times\R^N$ such that $\underline{U}^{\varepsilon,\tau}(t,x) \geq 1 - \varepsilon_0$,
we have $1>\phi(x \cdot e - ct + \tau,x) = \underline{U}^{\varepsilon,\tau}(t,x) + \varepsilon \geq 1-\varepsilon_0$,
and hence, a similar computation yields the same inequality. 
Therefore, claim  \eqref{under-u-sub} is proved.

Next, we show that there exists a sufficiently large $\tau_+>0$ (depending on $\varepsilon$) such that 
\begin{equation}\label{claim-U-standing}
	\underline{U}^{\varepsilon,\tau}(0,\cdot) < \tilde{U}(\cdot)\,\, \hbox{ in }\,\, \R^N \,\,\hbox{ for all }\,\, \tau \geq \tau_+.
\end{equation}
In fact, the uniform limits of both profiles give a sufficiently large $N_0=N_0(\varepsilon)$ such that 
\begin{equation}\label{standing-sliding}
	\begin{cases}
		\min\left\{\phi(x \cdot e,x), \, \tilde{\phi}(x \cdot e,x)\right\} \geq 1-\varepsilon & \hbox{ for }\,\, x\cdot e \leq -N_0,\vspace{5pt}\\
		\max\left\{\phi(x \cdot e,x), \, \tilde{\phi}(x \cdot e,x) \right\} \leq \varepsilon & \hbox{ for }\,\, x\cdot e \geq N_0.
	\end{cases}
\end{equation}
For any $\tau\geq \tau_+:= 2N_0$, if $x \cdot e \geq -N_0$, then
$\underline{U}^{\varepsilon,\tau}(0,x) = \phi(x \cdot e + \tau,x)-  \varepsilon  \leq 0< \tilde{\phi}(x \cdot e,x)= \tilde{U}(x)$; and if $x \cdot e \leq -N_0$, then
$\underline{U}^{\varepsilon,\tau}(0,x) < 1-\varepsilon \leq \tilde{U}(x)$. This provides \eqref{claim-U-standing}.

{\bf Step 1:} We show that $c=0$.

Assume by contradiction that $c\neq 0$. Then either $c>0$ or $c<0$. Without loss of generality, we assume that $c>0$, as we will sketch below that the case $c<0$ can be handled similarly.

We complete the proof by comparing the function $\underline{U}^{\varepsilon,\tau}(t,x)$ defined in \eqref{define-underU} with the standing pulsating wave $\tilde{U}(x)$. 
Since $c > 0$,  $\phi$ is decreasing in its first variable by Lemma \ref{mono-lem}, and hence, 
the function $\underline{U}^{\varepsilon,\tau}(t,x)$ is increasing in $t\in\R$. Then, \eqref{claim-U-standing} gives that 
$$	\underline{U}^{\varepsilon,\tau_{\varepsilon}}(t,\cdot) < \tilde{U}(\cdot)\,\, \hbox{ in }\,\, \R^N \,\,\hbox{ for all }\,\, t \leq 0,$$
where $\tau_{\varepsilon}=\tau_+$. Moreover, using $c>0$ again, we find a large $\tilde{t}> 0$ such that $\underline{U}^{\varepsilon,\tau_{\varepsilon}}(\tilde{t},\tilde{x}) > \tilde{U}(\tilde{x})$ for some $\tilde{x} \in \R^N$. 
Now, define
\begin{equation*}
t_\varepsilon= \sup \left\{t \in\R : \underline{U}^{\varepsilon,\tau_{\varepsilon}}(t,x)\leq \tilde{U}(x) \hbox{ for all } x\in\R^N\right\}.
\end{equation*}
It follows from the above inequalities that $0\leq t_{\varepsilon}<\tilde{t}$, and 
\begin{equation}\label{pretouch}
	\sup_{x \in \R^N}\left(\underline{U}^{\varepsilon,\tau_{\varepsilon}}(t,x)- \tilde{U}(x)\right) \leq 0\,\,\hbox{ for all }\,\, t \leq t_\varepsilon.
\end{equation}
By the definition of $t_{\varepsilon}$, there exists a sequence $\{x_n\}_{n\in\N} \subset \R^N$ (depending on $\varepsilon$) such that
\begin{equation}\label{touch position}
\lim_{n \to +\infty}\left(\underline{U}^{\varepsilon,\tau_{\varepsilon}}(t_\varepsilon,x_n) - \tilde{U}(x_n)\right) = 0.
\end{equation}

For each $n\in\N$, write $x_n=z_n+y_n$, where $z_n\in \Z^N$ and $y_n\in [0,1]^N$. 
By the periodicity of the coefficients, and standard parabolic and elliptic estimates, up to extraction of a subsequence, we obtain that, as $n\to+\infty$,
$$\underline{U}^{\varepsilon,\tau_{\varepsilon}}(\cdot, z_n + \cdot)  \to V^{\varepsilon,\tau_{\varepsilon}}(\cdot,\cdot) \,\, \hbox{ in }\,\, C^{1,2}_{loc}(\R\times\R^N),\quad \hbox{and}\quad \tilde{U}(\cdot + z_n) \to  V(\cdot) \,\, \hbox{ in }\,\, C^{2}_{loc}(\R^N).   $$
 It is clear that $V$ is a stationary solution of \eqref{equation}, while by \eqref{under-u-sub}, we have
\begin{equation*}
\partial_t V^{\varepsilon,\tau_{\varepsilon}} \leq \nabla \cdot (A(x) \nabla V^{\varepsilon,\tau_{\varepsilon}}) 
		+ B(x) \cdot \nabla V^{\varepsilon,\tau_{\varepsilon}} + f(x,V^{\varepsilon,\tau_{\varepsilon}}) - \varepsilon^2
\end{equation*}
whenever $V^{\varepsilon,\tau_{\varepsilon}} \geq 1 - \varepsilon_0$ or  $V^{\varepsilon,\tau_{\varepsilon}} \leq \varepsilon_0$. In other words, $V^{\varepsilon,\tau_{\varepsilon}}(t,x)$ is a strict subsolution of \eqref{equation} in the set $\{(t,x): V^{\varepsilon,\tau_{\varepsilon}}(t,x) \geq 1 - \varepsilon_0 \hbox{ or } V^{\varepsilon,\tau_{\varepsilon}}(t,x) \leq \varepsilon_0  \}$. Moreover, denoting by $y_{\infty}$ the limit of $y_n$ (after extracting a subsequence if necessary), we obtain from \eqref{pretouch} and \eqref{touch position} that
\begin{equation}\label{touch-V}
\sup_{(t,x) \in (-\infty, t_\varepsilon] \times \R^N}\left(V^{\varepsilon,\tau_{\varepsilon}}(t,x) - V(x)\right)
= V^{\varepsilon,\tau_{\varepsilon}}(t_\varepsilon,y_{\infty})-V(y_{\infty}) = 0.
\end{equation}
Furthermore, we necessarily have 
$\varepsilon_0 \leq V^{\varepsilon,\tau_{\varepsilon}}(t_\varepsilon,y_{\infty}) \leq 1 - \varepsilon_0$. 
Otherwise, the point $(t_\varepsilon,y_{\infty})$ would lie in the region where $V^{\varepsilon,\tau_{\varepsilon}}$ is a strict subsolution, and then the strong parabolic comparison principle would contradict the equality in \eqref{touch-V}.
Consequently, for $n$ large enough,
\begin{equation*}
\varepsilon_0/2 \leq \underline{U}^{\varepsilon,\tau_{\varepsilon}}(t_\varepsilon,x_n) \leq 1 - \varepsilon_0/2.
\end{equation*}

Summarizing the above, we conclude that, for any $\varepsilon\in (0,\varepsilon_0/4)$,
there exist $t_\varepsilon\geq 0$, $\tau_\varepsilon\in \R$ and $x_\varepsilon\in \R^N$ such that 
	\begin{equation}\label{quasitouch}
		\underline{U}^{\varepsilon,\tau_\varepsilon}(t_\varepsilon,x_\varepsilon) - \tilde{U}(x_\varepsilon) \geq -\varepsilon, \quad 
		\sup_{(t,x)\in (-\infty,t_\varepsilon] \times \R^N} \left(\underline{U}^{\varepsilon,\tau_{\varepsilon}}(t,x) - \tilde{U}(x)\right) \leq 0,
	\end{equation}
and 
	\begin{equation}\label{quasitouch position}
		\varepsilon_0/2 
		\leq \underline{U}^{\varepsilon,\tau_\varepsilon}(t_\varepsilon,x_\varepsilon) 
		\leq 1 - \varepsilon_0/2.
	\end{equation}
	Observe that owing to \eqref{quasitouch position} and the definition of $\underline{U}^{\varepsilon,\tau}$, 
	the quantity $x_\varepsilon\cdot e - c t_\varepsilon+ \tau_\varepsilon$ 
	remains bounded as $\varepsilon\to 0^+$.
	For each $\varepsilon$, write $x_{\varepsilon}=z_{\varepsilon}+y_{\varepsilon}$, where $z_{\varepsilon}\in \Z^N$ and $y_{\varepsilon}\in [0,1]^N$. Then, the quantity  $z_\varepsilon\cdot e - c t_\varepsilon+ \tau_\varepsilon$ is also bounded. Hence, after extracting a sequence if necessary, we obtain that, as $\varepsilon\to 0^+$,  $z_\varepsilon\cdot e - c t_\varepsilon+ \tau_\varepsilon\to \xi_*$ and $y_{\varepsilon} \to y_*$  for some $\xi_*\in\R$ and $y_*\in [0,1]^N$, and that   
	\begin{equation*}
\underline{U}^{\varepsilon,\tau_\varepsilon}(t_\varepsilon+ t,z_\varepsilon+ x) \to \phi(x \cdot e - ct + \xi_*,x), \quad \hbox{and}\quad \tilde{U}(x + z_\varepsilon) \to \tilde{V}(x)	
\end{equation*}
locally uniformly in $t\in\R$ and $x\in\R^N$, where $\tilde{V}$ is a stationary solution of \eqref{equation}.
Furthermore, passing to the limit as $\varepsilon\to 0^+$ in \eqref{quasitouch}, we get
	\begin{equation*}
		\phi(y_* \cdot e + \xi_*,y_*) \geq \tilde{V}(y_*),\quad\hbox{and}\quad 
		\phi(x \cdot e - ct + \xi_*,x) \leq \tilde{V}(x) \,\,\hbox{ in }\,\, (-\infty,0]\times\R^N. 
	\end{equation*}
	Since both $\phi(x \cdot e - ct + \xi_*,x)$ and $\tilde{V}(x)$ are entire solutions of \eqref{equation},
	the parabolic strong maximum principle implies that $\tilde{V}(x) \equiv \phi(x \cdot e - ct + \xi_*,x)$ in $\R\times \R^N$. This is impossible, because letting $t\to\pm \infty$ gives, locally uniformly in $x\in\R^N$, $\phi(+\infty,x)=0$ and $\phi(-\infty,x)=1$, whereas $\tilde{V}$ is independent of $t$. 
	Therefore, the case $c>0$ cannot occur.  The case $c < 0$ can be excluded by a similar sliding argument, starting from an analogous supersolution of the form $\phi(x \cdot e - ct + \tau,x)+  \varepsilon$.  As a consequence, we prove that $c = 0$.

	{\bf Step 2:} Assuming that $\tilde{\phi}\in C(\R\times\T^N)$, we show that  $\tilde{U}(x)$ is identically equal to $\phi(x \cdot e + \tau_0,x)$ for some $\tau_0 \in \R$.  
	
	The proof follows from a similar argument to that in Step 1, with some necessary modifications.
	For any $\varepsilon \in (0, \varepsilon_0/4)$ and $\tau\in\R$, define $\underline{U}^{\varepsilon,\tau}$ as in \eqref{define-underU}. Since we have already proved that $c=0$, the function $\underline{U}^{\varepsilon,\tau}$ is now independent of $t$. Moreover, it still satisfies \eqref{under-u-sub} and \eqref{claim-U-standing}. 
On the other hand, applying \eqref{standing-sliding} once again, we can choose some $\tau_-\leq -2N_0$
such that $\underline{U}^{\varepsilon,\tau_-}(\tilde{x}) > \tilde{U}(\tilde{x})$ for some $\tilde{x}\in\R^N$.
As a consequence, defining 
$$\tau_{\varepsilon}=\inf \left\{\tau\in\R : \underline{U}^{\varepsilon,s}(x)\leq \tilde{U}(x) \hbox{ for all } x\in\R^N, s\geq \tau \right\}, $$ 
we have $\tau_-<\tau_{\varepsilon} \leq \tau_+ $, and 
     \begin{equation}\label{touch-stationary}
     	\sup_{x\in\R^N}(\underline{U}^{\varepsilon,\tau_{\varepsilon}}(x) - \tilde{U}(x)) = 0.
     \end{equation}
    Furthermore, by a similar comparison argument to that in Step 1, we find some $x_\varepsilon\in \R^N$ 
    satisfying
	\begin{equation}\label{quasitouch-stationary}
	- \varepsilon\leq  \underline{U}^{\varepsilon,\tau_{\varepsilon}}(x_\varepsilon) - \tilde{U}(x_\varepsilon) \leq 0,
\end{equation}
and
\begin{equation*}
		\varepsilon_0/2 \leq \underline{U}^{\varepsilon,\tau_{\varepsilon}}(x_\varepsilon) 
	\leq 1 -\varepsilon_0/2.
	\end{equation*} 
These estimates also give
$$ \frac{\varepsilon_0}{2}\le\widetilde U(x_\varepsilon)
\leq 1-\frac{\varepsilon_0}{4}. $$
The uniform limits of $\tilde{U}$ and $\phi$ therefore imply that the quantities $x_\varepsilon\cdot e$ and $x_\varepsilon\cdot e + \tau_{\varepsilon}$ are bounded as  $\varepsilon\to 0^+$. For each $\varepsilon \in (0, \varepsilon_0/4)$, let $z_\varepsilon\in \Z^N$ be a point such that $y_{\varepsilon}:=x_\varepsilon- z_\varepsilon \in [0,1]^N$.
	Up to extraction of a sequence, we may assume that 
	$$y_\varepsilon\to y_0 \in [0,1]^N,\quad  z_\varepsilon\cdot e + \tau_{\varepsilon} \to \xi_0 \in\R\quad\hbox{and} \quad z_{\varepsilon}\cdot e \to \tilde{\xi}_0\in\R \quad\hbox{as}\,\, \varepsilon\to 0^+.$$ 
	By the periodicity,  $\tilde{U}(x+z_{\varepsilon})=\tilde{\phi}(x\cdot e+z_{\varepsilon}\cdot e, x)$ is a stationary solution of \eqref{equation}. Moreover, by condition (H2), $\phi\in C^2(\R\times \T^N)$ is a classical solution of \eqref{front equation} with $c=0$. Since translations in the first variable preserve this equation, a direct verification shows that
	$\phi(x\cdot e+\tau_{\varepsilon}+z_{\varepsilon}\cdot e, x)$ is also a stationary solution of \eqref{equation}.
	Therefore, by standard elliptic estimates, after extracting a further subsequence if necessary, the following convergences hold locally uniformly in $\R^N$ as $\varepsilon\to 0^+$:
	\begin{equation*}
		\underline{U}^{\varepsilon,\tau_{\varepsilon}}(x + z_\varepsilon) \to \phi(x \cdot e + \xi_0,x),\quad\hbox{and}\quad  \tilde{U}(x + z_\varepsilon) \to \tilde{\phi}(x \cdot e + \tilde{\xi}_0,x),
	\end{equation*} 
	where the second limit uses the assumption that $\tilde{\phi}\in C(\R\times\T^N)$. 
Passing to the limit as $\varepsilon\to 0^+$ in \eqref{touch-stationary} and \eqref{quasitouch-stationary}, we obtain
	\begin{equation*}
		\phi(y_0 \cdot e + \xi_0,y_0) = \tilde{\phi}(y_0 \cdot e + \tilde{\xi}_0,y_0),
		\quad\hbox{and}\quad
		\phi(x \cdot e + \xi_0,x) \leq \tilde{\phi}(x \cdot e + \tilde{\xi}_0,x) \,\,\hbox{ for } \,\, x \in \R^N.
	\end{equation*}
Since both $\phi(x \cdot e + \xi_0,x)$ and $\tilde{\phi}(x \cdot e + \tilde{\xi}_0,x)$ are stationary solutions of \eqref{equation}, the strong maximum principle yields $\phi(x \cdot e + \xi_0,x) = \tilde{\phi}(x \cdot e + \tilde{\xi}_0, x)$ for  $x\in \R^N$. 
To recover the original slice, fix $\hat{x}\in\R^N$ and
evaluate this identity at $x=\hat{x}-z_\varepsilon$, along
the integer sequence selected above. By the periodicity in
the second variable, we have $\phi(\hat{x}\cdot e-z_\varepsilon\cdot e+\xi_0,\hat{x})
=\tilde\phi(\hat{x}\cdot e-z_\varepsilon\cdot e +\tilde\xi_0,\hat{x})$.
Since $z_\varepsilon\cdot e\to\tilde\xi_0$,
the continuity of both profiles yields
$$ \phi(\hat{x}\cdot e+\xi_0-\tilde\xi_0,\hat{x})
=\tilde\phi(\hat{x}\cdot e,\hat{x})=\tilde U(\hat{x}).$$
Thus the conclusion follows with $\tau_0=\xi_0-\tilde\xi_0$. This completes the proof of Lemma \ref{zerouniqueness}.
\end{proof}

Combining Lemmas \ref{nonzerouniqueness} and \ref{zerouniqueness}, we immediately obtain the desired result in Theorem \ref{theo-unique}.


\section*{Acknowledgements} 
This work has received funding from NSFC (12471197), Guangdong Basic and Applied Basic Research Foundation (2023B1515020034) and Science and Technology Projects in Guangzhou (SL2024A04J00172).

\bigskip 
\noindent
{\bf Declaration on the Use of AI.} The human authors conceived and led the research, formulated the mathematical problem, developed the main results, and independently verified all arguments. AI assisted in refining the notation and improving the language and presentation. The final manuscript was approved by the authors, who take full responsibility for its content.


\end{document}